\documentclass[11pt,reqno]{amsart}
\usepackage{hyperref}
\usepackage{graphicx}
\usepackage{color}
\usepackage[all]{xy}
\usepackage{amsfonts,amssymb}
\usepackage{amsthm}

\usepackage{mathrsfs}

\usepackage{a4wide}

\newtheorem{neu}{}[section]
\newtheorem{Cor}[neu]{Corollary}
\newtheorem*{Cor*}{Corollary}
\newtheorem{Thm}[neu]{Theorem}
\newtheorem*{Thm*}{Theorem}

\newtheorem{Prop}[neu]{Proposition}
\newtheorem*{Prop*}{Proposition}
\theoremstyle{definition}
\newtheorem{Lemma}[neu]{Lemma}

\newtheorem*{Rmk*}{Remark}
\newtheorem{Rmk}[neu]{Remark}

\newtheorem*{Ex*}{Example}

\newtheorem*{Qu*}{Question}

\newtheorem{Def}[neu]{Definition}

\newcommand{\N}{\mathbb{N}}

\newcommand{\Z}{\mathbb{Z}}
\newcommand{\R}{\mathbb{R}}

\newcommand{\CZ}{\mu_{\mathrm{CZ}}}


\newcommand{\im}{\mathrm{im\,}}
\newcommand{\id}{\mathrm{id}}

\newcommand{\om}{\omega}
\newcommand{\Om}{\Omega}
\newcommand{\ev}{\mathrm{ev}}

\newcommand{\LW}{\mathrm{LW}}
\newcommand{\Rb}{\mathcal{R}}

\newcommand{\nbd}{neighborhood }
\newcommand{\rank}{\mathrm{rank}}

\newcommand{\A}{\mathcal{A}}

\renewcommand{\S}{\mathcal{S}}

\newcommand{\F}{\mathcal{F}}

\newcommand{\D}{\mathcal{D}}

\newcommand{\M}{\mathcal{M}}

\newcommand{\J}{\mathcal{J}}

\newcommand{\E}{\mathcal{E}}
\renewcommand{\L}{\mathcal{L}}

\renewcommand{\H}{\mathrm{H}}

\newcommand{\Ham}{\mathrm{Ham}}

\newcommand{\FC}{\mathrm{FC}}

\newcommand{\FH}{\mathrm{FH}}

\newcommand{\Crit}{\mathrm{Crit}}

\newcommand{\V}{\mathcal{V}}

\newcommand{\Hc}{\mathcal{H}}
\newcommand{\Bc}{\mathcal{B}}
\newcommand{\Nc}{\mathcal{N}}
\newcommand{\beq}{\begin{equation}}
\newcommand{\beqn}{\begin{equation}\nonumber}
\newcommand{\eeq}{\end{equation}}
\newcommand{\bea}{\begin{equation}\begin{aligned}}
\newcommand{\bean}{\begin{equation}\begin{aligned}\nonumber}
\newcommand{\eea}{\end{aligned}\end{equation}}

\numberwithin{equation}{section}

\definecolor{Urs}{rgb}{0,.7,0}
\definecolor{Youngjin}{rgb}{0,0,1}
\definecolor{red}{rgb}{1,0,0}

\newcommand{\He}{\mathscr{H}}
\newcommand{\p}{\partial}
\newcommand{\Mf}{\mathfrak{M}}

\newcommand{\g}{{\mathfrak g}}         

\newcommand{\intinf}{{\int_{-\infty}^{\infty}}}


\begin{document}
\title[On magnetic leaf-wise intersections]
{On magnetic leaf-wise intersections}
\author{Youngjin Bae}
\address{
    Youngjin Bae\\
    Department of Mathematics and Research Institute of Mathematics\\
    Seoul National University}
\email{jini0919@snu.ac.kr}

\keywords{Leaf-wise intersection point, Floer homology, 
Rabinowitz Floer homology, Isoperimetric inequality, 
Symplectically hyperbolic manifold}
\begin{abstract}
In this paper we study leaf-wise intersections 
in a setting where the Hamiltonian perturbation has magnetic effects. 
In particular, we establish their existence 
under certain topological assumptions on the ambient manifold.
\end{abstract}
\maketitle

\section{Introduction}
Let $(N,g)$ be a closed connected orientable Riemannian manifold 
and $\tau:T^*N\to N$ be its cotangent bundle with the canonical symplectic form 
$\om_{std}=dp\wedge dq$. 
Here $(q,p)$ are the canonical coordinates on $T^*N$.
Let $\Sigma$ be a hypersurface in an exact symplectic manifold $(T^*N,d\lambda,\lambda=p\,dq)$
such that $(\Sigma,\alpha:=\lambda|_{\Sigma})$ is a contact manifold.

As a main example, such a hypersurface can be obtained from 
a sufficiently high energy level of a mechanical Hamiltonian
\bean
G=\frac{1}{2}|p|^2+U(q),
\eea
i.e. a closed energy hypersurface $G^{-1}(k)\subset T^*N,\ k>\max_{q\in N}U(q)$
is a contact manifold with a contact form $\lambda|_{G^{-1}(k)}$.
Here $|p|$ denotes the dual norm of the Riemannian metric $g$ on $N$
and $U:N\to\R$ is a smooth potential.
This Hamiltonian system describes the motion of a particle on $N$ 
subject to the conservative force $-\nabla U$.

The contact hypersurface $\Sigma$ is foliated by the leaves of the characteristic line bundle 
which is spanned by the Reeb vector field $R$ of $\alpha$.
Let $\phi_t^\Sigma:\Sigma\to\Sigma$ be the flow of $R$.
For $x\in\Sigma$ we denote by $L_x$ the leaf through $x$ which can be parametrized as
$L_x=\{\phi_t^\Sigma(x)\,:\,t\in\R\}$.
If $L_x$ is closed, we call $L_x$ {\em a closed Reeb orbit}
and $x$ {\em a periodic point}.

In order to introduce leaf-wise intersections,
we need the following definitions.
\begin{Def}\label{def:betadef}
A {\em magnetic perturbation} $\sigma\in C^\infty(\R/\Z,\Om^2(N))$ 
is a time-dependent closed 2-form on $N$ which satisfies the following conditions:
\begin{itemize}
\item $\widetilde\sigma(t)=d\theta(t)$, for some $\theta\in C^\infty(\R/\Z,\Om^1(\widetilde N))$ for all $t\in\R/\Z$;
\item $\sigma(t)=0$, $\theta(t)=0$ for all $t\in[0,\frac{1}{2}]$;
\item $\theta(t)\in\Om^1(\widetilde N)$ is bounded for all $t\in[\frac{1}{2},1]$.
\end{itemize}
Here $\widetilde\sigma(t)$ is the lift of $\sigma(t)$ to the universal cover $\widetilde N$ and
such a 2-form $\sigma(t)$ which has a bounded primitive on the universal cover is called {\em $\widetilde d$-bounded}.
\end{Def}
Let $\Mf$ be the set of such magnetic perturbations and $\mathcal P$ be the set of primitives of magnetic perturbations on the universal cover.
We consider an $\R/\Z$-parametrized symplectic form $\om_\sigma:\R/\Z\to\Om^2(T^*N)$ as follows
\bea\label{eqn:om_m}
\om_\sigma:=\om_{\rm std}+\tau^*\sigma.
\eea

\begin{Def}\label{def:perham}
We define
\bean
&\Hc:=\{H\in C^\infty_c(\R/\Z\times T^*N)\,:\,H(t,\cdot)=0,\ \forall t\in[0,\frac{1}{2}]\}.
\eea
Throughout this article, we fix an $H\in\Hc$ and
the diffeomorphism $\varphi$ which is the time-1-map of
the Hamiltonian vector field $X_H^\sigma$ defined by $\iota_{X_H^\sigma}\om_\sigma=-dH$.
When $\sigma\equiv 0$, the diffeomorphism $\varphi$ becomes 
a (non-magnetic) Hamiltonian diffeomorphism of $(T^*N,\om_{std})$.
\end{Def}

\begin{Def}\label{def:m_lwip}
A point $x\in\Sigma$ is called a 
{\em magnetic leaf-wise intersection point},
if $\varphi(x)\in L_x$.
In other words, there exists $\eta\in\R$ such that
\bea\label{eqn:lwip}
\phi_\eta^\Sigma(\varphi(x))=x.
\eea
\end{Def}

Note that a leaf-wise intersection point is 
the $\sigma=0$ case of a magnetic leaf-wise intersection point.
The leaf-wise intersection problem asks whether a given diffeomorphism has
a leaf-wise intersection point in a given hypersurface $\Sigma$.
If there exist leaf-wise intersections 
one can ask further a lower bound on the number of leaf-wise intersections.
This problem was introduced by Moser in \cite{Mos}, 
and studied further in \cite{Ban, EH, Hof, Gin, Dra, Gur, Zil, 
AF10b, AF10c, AM09, AM10, Kan09, Kan10, Kan10b, Mer10}.
See \cite{AF09} for the brief history of these problems.
In this article, we investigate the approaches in \cite{AF09b,MMP}
and generalize their results.

We call a hypersurface $\Sigma\subset T^*N$ {\em non-degenerate} 
if closed Reeb orbits on $\Sigma$ form a discrete set.
A generic $\Sigma$ is non-degenerate, see \cite[Theorem B.1]{CF09}.
If $\Sigma$ is non-degenerate, 
then periodic leaf-wise intersection points
can be excluded by choosing a generic Hamiltonian function, 
see \cite[Theorem 3.3]{AF09b}.
In this setting, a leaf-wise intersection point $x$
has the unique real number $\eta$ such that (\ref{eqn:lwip}) is satisfied.
By these reason, we only consider {\em non}-periodic 
(magnetic) leaf-wise intersection points.

In order to state the main result, 
we need the following notion.
Let $\L_N$ be the free loop space of $(N,g)$. 
The energy functional $\E:\L_N\to\R$ is given by
\bean
\E(q):=\int_0^1\frac{1}{2}|\dot q|^2dt.
\eea
For given $0<T<\infty$, denote by
\bean
\L_N(T):=\bigg\{q\in\L_N\,:\,\E(q)\leq\frac{1}{2}T^2\bigg\}.
\eea

Let $\Sigma$ be a non-degenerate fiberwise starshaped hypersurface and
$\varphi$ be a generic diffeomorphism.
Given $T>0$ let us define
\bean
n(T)=n_{\Sigma,\varphi}(T)&:=\#\{x\in T^*N\,:\,\phi_\eta^\Sigma(\varphi(x))=x,\ \text{for some }\eta\in(0,T)\}.
\eea

\begin{Thm}\label{thm:fconti}
Let $N$ be a closed connected oriented manifold of dimension $n\geq2$.
Let $\Sigma$ be a non-degenerate fiberwise starshaped hypersurface in $T^*N$.
Let $g$ be a bumpy Riemannian metric on $N$ with $S^*_g N$ contained
in the interior of the compact region bounded by $\Sigma$.
Assume that $\varphi$ is generic.
Then there exists a constant $c=c(N,g,\Sigma,\varphi)>0$
such that the following holds:
For all sufficiently large $T>0$,
\bea\label{eqn:mlwipesloophm}
n(T)\geq \frac{1}{c}\cdot\rank\{\iota:\H_*\big(\L_N(c(T-1))\big)\to\H_*(\L_N)\}.
\eea
\end{Thm}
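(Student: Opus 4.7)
The plan is to extract the count from a filtered version of the chain of isomorphisms
\[
\H_*(\L_N) \;\cong\; \FH(\A^F) \;\cong\; \FH(\A^F_H) \;\cong\; \FH(\A_\m)
\]
supplied by \eqref{eqn:RFHloop}, \eqref{eqn:AFHAF} and Theorem \ref{thm:continuation}: one bounds the rank of a suitable filtered piece of $\FH(\A_\m)$ from above by a count of critical points of $\A_\m$ with bounded Reeb parameter, and from below by the rank of the inclusion-induced map on loop-space homology.

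First I would establish a linear action--period estimate. For $(u,\eta)\in\Crit(\A_\m)$ I would start from
\[
\A_\m(u,\eta)=\int_0^1 u^*\lambda-\eta\int_0^1 F\,dt-\int_0^1 H\,dt+\Bc_\m(u),
\]
use the Liouville contribution on the contact hypersurface $(\Sigma,\alpha)$, the compact support of $H$, and the $\widetilde d$-boundedness of $\sigma$ (so that $\Bc_\m$ stays bounded uniformly in the lift, once a fundamental region and the primitive $\theta$ are fixed) to obtain an estimate of the shape $|\A_\m(u,\eta)+c_0\eta|\leq c_1$ with $c_0>0$. The hypotheses that $\Sigma$ is fiberwise starshaped and contains $S^*_gN$ in its interior then convert such Reeb-parameter bounds into linear bounds on the $g$-length (and hence on $\E_g$) of the projected base loop of $u$.

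Next I would construct a filtered Piunikhin--Salamon--Schwarz chain map
\[
\Phi_L \colon \CM_*^{\{\E_g\leq \frac{1}{2}L^2\}}(\L_N) \longrightarrow \FC^{I(L)}(\A_\m),
\]
where the action window $I(L)$ cuts out, under the action--period estimate, precisely the critical points with Reeb parameter $0<\eta\leq c_2L+c_3$, and such that after passage through the isomorphisms above $\Phi_L$ induces the inclusion-induced map $\iota_L\colon\H_*(\L_N(L))\to\H_*(\L_N)$. Building $\Phi_L$ amounts to composing the Abbondandolo--Schwarz PSS map behind \eqref{eqn:RFHloop}, the continuation underlying \eqref{eqn:AFHAF}, and the continuation of Theorem \ref{thm:continuation}, and verifying that each step shifts the action by at most an additive constant independent of $L$. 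Granted $\Phi_L$, we obtain
\[
\rank\iota_L \;\leq\; \rank\FH^{I(L)}(\A_\m) \;\leq\; \#\bigl\{\Crit(\A_\m)\cap\{0<\eta\leq c_2L+c_3\}\bigr\}.
\]
Each such critical point is a magnetic leaf-wise intersection by Proposition \ref{prop:crit}, and genericity of $(H,\m)$ combined with non-degeneracy of $\Sigma$ and the argument of \cite[Thm.~3.3]{AF09b} excludes periodic ones. Setting $L=c(T-1)$ and absorbing all constants into a single $c=c(N,g,\Sigma,\varphi_\m)>0$ then yields \eqref{eqn:mlwipesloophm} for all sufficiently large $T$.

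The main obstacle will be the filtered PSS construction: one must arrange the PSS and continuation homotopies so that their action shifts are additive in $L$, while accommodating both the Hamiltonian perturbation $H$ and the magnetic decoration $\Bc_\m$, whose dependence on the lift $\widetilde u$ and on the chosen primitive $\theta$ must remain compatible with the continuation trajectories generated by the interpolation between $\om_{\mathrm{std}}$ and $\om_\m$ in Theorem \ref{thm:continuation}. The bumpy hypothesis on $g$ is what renders the Morse model of $(\L_N,\E_g)$ usable on every sublevel $\{\E_g\leq\tfrac{1}{2}L^2\}$.
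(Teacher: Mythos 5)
Your proposal takes a genuinely different route from the paper, but it contains a gap that is in fact the crux of the whole argument.

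The paper does not work with $\A_\m$ here at all; it works with the $\F$-Rabinowitz action functional $\A^f_\m$, where the convexification $f\in\bigcap_{r>0}\F(\frac16,r)$ is strictly positive, satisfies $f(\eta)=\eta$ for large $\eta$, and decays controllably at $-\infty$. The paper then proves a \emph{filtered} comparison $\dim\FH^{(a,T)}(\A^f_\m)\geq 4^{-n}\dim\FH^{(h^{-n}[a],h^n[T])}(\A^f)$ (Lemma~\ref{lem:dimcompa0}, Proposition~\ref{lem:dimcompa}) via the magnetic continuation, and then simply cites the Macarini--Merry--Paternain dimension bound \eqref{eqn:dimAfesfree} for $\A^f$. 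Your plan, by contrast, is to run a filtered PSS-type map directly into $\FC^{I(L)}(\A_\m)$ and claim it induces $\iota_L$ on loop-space homology. That step is not a routine assembly of the known isomorphisms: \eqref{eqn:RFHloop} and \eqref{eqn:AFHAF} are isomorphisms of the \emph{full} (bi-limit) Floer homology, and the filtered refinement that ties an action window of the Rabinowitz-type functional to an energy sublevel $\L_N(L)$ is precisely the hard content of \cite{MMP}, and there the convexification $f$ is indispensable --- it is what allows one to build a filtration-respecting continuation along a concentric family of starshaped hypersurfaces, and it is also what forces $\eta>0$ on the relevant action window (via Proposition~\ref{prop:critmlwip}), so that action bounds genuinely translate into Reeb-period bounds. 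With $\A_\m$ alone, critical points can have negative $\eta$, and you have no mechanism to exclude them from $\FH^{I(L)}(\A_\m)$; your proposed estimate $|\A_\m(u,\eta)+c_0\eta|\leq c_1$ would also detect them.

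A further concrete mismatch: you assert that ``each step shifts the action by at most an additive constant independent of $L$.'' The paper's continuation between $\A^F_H$ and $\A_\m$ does not achieve this; Lemma~\ref{lem:actiones} shows the action window \emph{shrinks multiplicatively} (roughly by a factor of $2$) at each of the $N$ subdivision steps, and in the $\A^f$ setting Lemma~\ref{lem:acvales} gives the affine law $q\geq\frac{9}{10}p-\frac{1}{10}$, iterated $n$ times. The loss is multiplicative, and the paper has to count it carefully (the factor $1/4^n$ and the replacement of $T$ by $h^n[T]$). An additive-shift accounting would not reproduce, and a naive additive claim does not hold for the continuation maps that actually exist here. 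In short, to make your plan work you would need to re-prove the MMP filtered isomorphism in the magnetic setting from scratch, including the passage through the $f$-functional --- at which point you are back to the paper's argument.
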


Under a certain topological assumption on $N$, 
the right hand side of (\ref{eqn:mlwipesloophm})
grows exponentially with $T$.
We denote by $\widetilde\pi_1(N)$ the set of conjugacy classes of $\pi_1(N)$.
Then the connected components of $\L_N$ corresponds to the elements of $\widetilde\pi_1(N)$,
hence the exponential growth rate of $\widetilde\pi_1(N)$ implies that
\bea\label{eqn:rankexp}
\liminf_{T\to\infty}\rank\{\iota:\H_0(\L_N(T))\to\H_0(\L_N)\}
\eea
has also exponential growth with respect to $T$, see \cite{Mc}.
Then the following corollary comes from 
exponential growth of (\ref{eqn:rankexp}) and Theorem \ref{thm:fconti}.

\begin{Cor}
Let $N$ be a closed connected oriented manifold of dimension $n\geq2$.
Let $\Sigma$ be a non-degenerate fiberwise starshaped hypersurface in $T^*N$.
Suppose that $\widetilde\pi_1(N)$ has exponential growth.
If $\varphi$ 
is generic then $n(T)$ grows exponentially with $T$.
\end{Cor}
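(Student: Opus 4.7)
The plan is to combine Theorem~\ref{thm:fconti}, restricted to degree zero, with the standard comparison between the energy filtration on $\L_N$ and the word-length filtration on the conjugacy classes of $\pi_1(N)$, along the lines of McDuff~\cite{Mc}.

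Since the total rank appearing in~(\ref{eqn:mlwipesloophm}) dominates the degree-zero rank, it is enough to bound
\[\rank\{\iota:\H_0(\L_N(R))\to\H_0(\L_N)\}\]
from below, with $R=c(T-1)$. Rational $\H_0(\L_N)$ is the free module on $\pi_0(\L_N)$, which is canonically in bijection with $\widetilde\pi_1(N)$: a component is determined by the conjugacy class in $\pi_1(N)$ of the free-homotopy class of any of its loops. Such a component lies in the image of $\H_0(\L_N(R))\to\H_0(\L_N)$ if and only if its free-homotopy class is represented by a smooth loop of length at most $R$, equivalently of energy at most $R^2/2$.

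Next I would fix a finite generating set $\{g_1,\dots,g_r\}$ of $\pi_1(N,x_0)$ and represent each $g_i$ by a smooth based loop of length at most $L=L(N,g)$. Any word of length at most $k$ is then realised by the concatenation of the corresponding loops, which has length at most $Lk$; hence every conjugacy class of word length $\leq k$ admits a free-loop representative of length $\leq Lk$. Writing $a_k$ for the number of conjugacy classes of word length $\leq k$, this yields
\[\rank\{\iota:\H_0(\L_N(R))\to\H_0(\L_N)\}\geq a_{\lfloor R/L\rfloor}.\]
By hypothesis $\widetilde\pi_1(N)$ has exponential growth, so there exist $\delta>0$ and $k_0$ with $a_k\geq e^{\delta k}$ for all $k\geq k_0$. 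Substituting $R=c(T-1)$ into Theorem~\ref{thm:fconti} gives
\[n_{\Sigma,\varphi_\m}(T)\geq c\cdot a_{\lfloor c(T-1)/L\rfloor}\geq c\cdot e^{\delta(c(T-1)/L - 1)}\]
for all $T$ sufficiently large, which is the desired exponential lower bound.

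No serious obstacle arises: all of the Floer-theoretic content is already packaged inside Theorem~\ref{thm:fconti}. What remains is the elementary identification $\pi_0(\L_N)\cong\widetilde\pi_1(N)$ and the length-versus-word-length comparison recorded in~\cite{Mc}, both of which are standard. The only minor point to watch is that one should use the coefficient field (e.g.\ $\Q$) in which $\H_0(\L_N)$ is free on $\pi_0(\L_N)$, so that the displayed rank inequality is literally correct.
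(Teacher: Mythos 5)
Your proposal is correct and follows essentially the same route as the paper: it restricts the dimension estimate of Theorem~\ref{thm:fconti} to degree zero, uses the identification $\pi_0(\L_N)\cong\widetilde\pi_1(N)$, and invokes the length-versus-word-length comparison (which the paper attributes to~\cite{Mc}) to convert exponential growth of $\widetilde\pi_1(N)$ into exponential growth of $\rank\{\iota:\H_0(\L_N(T))\to\H_0(\L_N)\}$. The only minor caveat is that the relevant coefficient ring here is $\Z_2$ (the Floer theory in this paper is over $\Z_2$), but $\H_0$ of any space is free on the set of path-components over any coefficient ring, so your remark about choosing $\Q$ is unnecessary and the argument goes through unchanged.
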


The main example of such $N$ is any surface of genus greater than one.
In these cases, the magnetic field $\sigma$ can be chosen by the volume form of that surface.
Other candidates for $N$ are the {\em symplectically hyperbolic} manifolds
which will be discussed in Definition \ref{def:symphyp}, Proposition \ref{prop:symphypexp}.

In proving Theorem \ref{thm:fconti}, 
we heavily need a recent result by Marcarini, Merry and Paternain \cite{MMP}.
With the same assumption as in Theorem \ref{thm:fconti}
and a generic non-magnetic Hamiltonian, they showed
the exponential growth rate of leaf-wise intersections.
In this paper, we extend their result to the magnetic case by using a method developed in \cite{BF}.

More precisely we will use a variational approach associated to magnetic leaf-wise intersections.
The main tools are Rabinowitz Floer homology and its variations.
In order to show the main result, invariance of Rabinowitz Floer homology is crucial and
we take advantage of continuation map between two different Rabinowitz Floer chain complexes.

\vspace{3mm}

\emph{Acknowledgement: }  
I am grateful to my advisor Urs Frauenfelder for fruitful discussions
and detailed comments.
I also thank the anonymous referee.
The author is supported by the Basic
research fund 2010-0007669 funded by the Korean government.

\section{A perturbation of the Rabinowitz action functional}\label{sec:pert}

Let us begin with a {\em defining Hamiltonian} $\bar{F}$ of a contact hypersurface $\Sigma\subset T^*N$ 
which is constant outside of a compact set containing $\Sigma$.
\begin{Def}
Given a fiberwise starshaped hypersurface $\Sigma\subset T^*N$,
\bean
\bar{\D}(\Sigma):=\{\bar{F}\in C^\infty(T^*N)\,:\,\bar{F}^{-1}(0)=\Sigma,\ X_{\bar{F}}|_\Sigma=R,\ X_{\bar{F}}\text{ is compactly supported }\}.
\eea
\end{Def}

With the defining Hamiltonian $\bar{F}\in\bar{\D}(\Sigma)$, 
the Rabinowitz action functional
$\A^{\bar{F}}:\L\times\R\to\R$ is defined by
\bean
\A^{\bar{F}}(u,\eta):=\int_0^1 u^*\lambda-\eta\int_0^1 \bar{F}(u(t))dt.
\eea
Here $\L=\L_{T^*N}:=C^\infty(\R/\Z,T^*N)$.
The critical points of $\A^{\bar{F}}$ satisfy
\bea\label{eqn:critA^F0}
\left.
\begin{array}{cc}
\frac{d}{dt}u(t)=\eta X_{\bar{F}}(u(t)) \\
\bar{F}(u(t))dt=0.
\end{array}
\right\}
\eea
Since the restriction of the Hamiltonian vector field $X_{\bar{F}}$
to $\Sigma$ is the Reeb vector field, the equations (\ref{eqn:critA^F0})
are equivalent to
\bean
\left.
\begin{array}{cc}
\frac{d}{dt}u(t)=\eta R(u(t)) \\
u(t)\in\Sigma,
\end{array}
\right\}
\eea
i.e. $u$ is a periodic orbit of the Reeb vector field on $\Sigma$ with period $\eta$.

If $\A^{\bar{F}}$ is Morse-Bott, then $\FH_*(\A^{\bar{F}})$ is well-defined, see \cite{CF09}.
By the work of Abbondandolo-Schwarz \cite{AS09} and
Cieliebak-Frauenfelder-Oancea \cite{CFO09},
we then have the following non-vanishing result when $*\neq0,1$.
\bea\label{eqn:RFHloop}
\FH_*(\A^{\bar{F}})=
\left\{
\begin{array}{ll}
\H_*(\L_N),&\text{if }\ *>1, \\
\H^{-*+1}(\L_N),&\text{if }\ *<0.
\end{array}
\right.
\eea
Here $\FH_*(\A^{\bar{F}})$ is the Floer homology for $\A^{\bar{F}}$
and $\L_N$ is the free loop space of $N$.

Now we introduce a {\em time-dependent defining Hamiltonian}
in order to consider an action functional whose critical points
give rise to leaf-wise intersection points.

\begin{Def}\label{def:defham}
Given a fiberwise starshaped hypersurface $\Sigma\subset T^*N$,
\bean
\D(\Sigma):=\{F\in C^\infty(\R/\Z\times T^*N)\,:\,F(t,x)=\rho(t)\bar{F}(x),\ \bar{F}\in\bar{\D}(\Sigma)\}.
\eea
Here $\rho:\R/\Z\to\R^{\geq0}$ satisfies
\bea\label{eqn:rhocondi}
\int_0^1\rho(t)dt=1 \quad\text{and}\quad \mathrm{supp}(\rho)\subset (0,\frac{1}{2}).
\eea
Remark that the Hamiltonian vector field $X_F$ satisfies
\bea\label{eqn:hamvec}
X_F(t,x)=\rho(t)X_{\bar{F}}(x).
\eea
\end{Def}

A leaf-wise intersection point $x\in\Sigma$ 
with respect to $\varphi_0\in\Ham_c(T^*N)$ 
can be interpreted as 
a critical point of a perturbed Rabinowitz action functional 
$\A^F_H:\L\times\R\to\R$ defined by
\bean
\A^F_H(u,\eta)=\int_0^1 u^*\lambda-\eta\int_0^1 F(t,u)dt-\int_0^1 H(t,u)dt.
\eea
Here the additional Hamiltonian $H:T^*N\to\R$ generates $\varphi_0$.
Then a critical point $(u,\eta)$ of $\A^F_H$ is a solution of
\bea\label{eqn:critFH}
\left.
\begin{array}{cc}
\frac{d}{dt}u(t)=\eta X_F(t,u(t))+X_H(t,u(t)) \\
\int_0^1F(t,u(t))dt=0.
\end{array}
\right\}
\eea
We observed in \cite{AF09} that if $(u,\eta)$ is a critical point of $\A^F_H$
then $u(\frac{1}{2})\in\Sigma$ is a leaf-wise intersection point. 
For a generic Hamiltonian $H$ for which $\A^F_H$ is Morse, 
Albers-Frauenfelder \cite{AF09b} constructed an isomorphism
\bea\label{eqn:AFHAF}
\FH(\A^F_H)\cong\FH(\A^{\bar{F}}).
\eea

Now we construct an action functional whose critical points give rise to
magnetic leaf-wise intersection points with respect to $\varphi=\phi_{X^\sigma_H}^1$. 
Throughout this article, we fix a primitive $\theta\in\mathcal P$ as in Definition \ref{def:betadef}.
We then define an additional term
\bea\label{eqn:beta_sigma_theta}
\Bc_\theta:\L&\to\R \\
u&\mapsto\int_0^1\widetilde\tau^*\theta_t(\widetilde u(t))[\frac{d}{dt}\widetilde u(t)]dt.
\eea
Here $\widetilde\tau:T^*\widetilde N\to\widetilde N$, 
$\widetilde u:\R/\Z\to T^*\widetilde N$
is a lifting of $u$.
We also fix a fundamental region $\underline N\subset\widetilde N$ and
assume that $\widetilde u(0)\in\underline N$.

A new action functional 
$\A_\theta:\L\times\R\to\R$ is defined by
\bean
\A_\theta(u,\eta)&=\A^F_{H,\theta}(u,\eta):=\A^F_H(u,\eta)+\Bc_\theta(u)  \\
&=\int_0^1u^*\lambda-\eta\int_0^1F(t,u(t))dt-\int_0^1H(t,u(t))dt
+\int_0^1\widetilde\tau^*\theta_t(\widetilde u(t))[\frac{d}{dt}\widetilde u(t)]dt.
\eea
A critical point $(u,\eta)\in\L\times\R$ of $\A_\theta$ satisfies
\bea\label{eqn:critm}
\left.
\begin{array}{cc}
\frac{d}{dt}u(t)=\eta X_F(t,u(t))+X_H^\sigma(t,u(t)) \\
\int_0^1F(t,u(t))dt=0.
\end{array}
\right\}
\eea
For convenience,
\bean
\Crit(\A_\theta)&:=\{w=(u,\eta)\in\L\times\R\,:\,(u,\eta)\text{ satisfies (\ref{eqn:critm})}\}.
\eea
In the following proposition we interpret the critical point 
as a magnetic leaf-wise intersection point as in Definition \ref{def:m_lwip}.

\begin{Prop}\label{prop:crit}
Let $(u,\eta)\in\Crit(\A_\theta)$. Then $x=u(\frac{1}{2})$ satisfies $\varphi(x)\in L_x$.
Thus $x$ is a magnetic leaf-wise intersection point.
\end{Prop}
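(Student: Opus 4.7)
The plan is to exploit the disjoint supports of $\rho$, $H$, and $\beta$ so that the critical-point equation (\ref{eqn:critm}) decouples across the halves $[0,\tfrac12]$ and $[\tfrac12,1]$ of $S^1$. By Definition \ref{def:perham} and the definition of $\B$, both $H$ and $\beta$ vanish on $[0,\tfrac12]$; in particular $\omega_\m = \omega_{\mathrm{std}}$ there and $X_H^\m \equiv 0$ on that interval, so there is no ambiguity in the Hamiltonian vector fields. By (\ref{eqn:rhocondi}), $\rho$ is supported in $(0,\tfrac12)$, so $X_F \equiv 0$ on $[\tfrac12,1]$.

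On $[0,\tfrac12]$ the flow equation in (\ref{eqn:critm}) reduces to $\partial_t u = \eta\rho(t) X_{F_U}(u)$, which preserves $F_U$ and accumulates total time $\eta\int_0^{1/2}\rho\,dt = \eta$, yielding $u(\tfrac12) = \phi^{F_U}_\eta(u(0))$ together with the constancy $F_U(u(t)) \equiv F_U(u(0))$ on $[0,\tfrac12]$. On $[\tfrac12,1]$ the same equation reduces to $\partial_t u = X_H^\m(t,u)$; since $X_H^\m$ is trivial on $[0,\tfrac12]$, concatenating the trivial flow with this piece identifies the forward flow from $u(\tfrac12)$ at time $\tfrac12$ up to time $1$ with the full time-$1$ map $\varphi_\m = \phi^1_{X_H^\m}$, so $u(1) = \varphi_\m(u(\tfrac12))$. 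The loop condition $u(0) = u(1)$ then gives $u(0) = \varphi_\m(u(\tfrac12))$, and combining the two halves produces $\phi^{F_U}_\eta(\varphi_\m(u(\tfrac12))) = u(\tfrac12)$.

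It remains to verify that $x := u(\tfrac12)$ lies on $\Sigma$ and that $\phi^{F_U}_\eta$ acts on $\Sigma$ as the Reeb flow $\phi^\Sigma_\eta$. The scalar constraint in (\ref{eqn:critm}) becomes $\int_0^1\rho(t)(F_U(u)-k)\,dt = F_U(u(0)) - k = 0$, using constancy of $F_U$ along $u$ on $\mathrm{supp}(\rho)$ together with $\int_0^1\rho = 1$; hence $u(0) \in \Sigma$, and by energy preservation so is $x$. Since $F_U - k$ plays the role of a time-independent defining Hamiltonian for $\Sigma$ (so $X_{F_U}|_\Sigma = R_\Sigma$ in the sense of Definition \ref{def:defham}), the restriction of $\phi^{F_U}_\eta$ to $\Sigma$ coincides with $\phi^\Sigma_\eta$. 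We conclude $\phi^\Sigma_\eta(\varphi_\m(x)) = x$, which is exactly (\ref{eqn:lwip}). The whole argument is bookkeeping once the time-splitting is in place; the only ingredient genuinely requiring input from the setup is the identification of the $F_U$-flow on $\Sigma$ with the Reeb flow, which is what pins down that the Lagrange multiplier $\eta$ actually records Reeb time.
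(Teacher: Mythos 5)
The proposal is correct and takes essentially the same approach as the paper: exploit the disjoint time supports of $\rho$, $H$, and $\beta$ to split the critical-point ODE into a pure Reeb segment on $[0,\tfrac12]$ and a pure $\varphi_\m$ segment on $[\tfrac12,1]$, use the integral constraint to pin the loop to $\Sigma$, and then read off (\ref{eqn:lwip}). You merely make the reparameterization to the time-$\eta$ flow and the identity $\phi^{F_U}_\eta(\varphi_\m(x))=x$ more explicit than the paper, which phrases the conclusion directly as $\varphi_\m(u(\tfrac12))=u(0)\in L_{u(\frac12)}$; the content is the same.
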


\begin{proof}
For $t\in[0,\frac{1}{2}]$ we compute, using $H(t,\cdot)=0$ for all $t\leq\frac{1}{2}$,
\bean
\frac{d}{dt}\bar{F}(u(t))&=d\bar{F}(u(t))\cdot\frac{d}{dt}u(t) \\
&=d\bar{F}(u(t))\cdot[\eta \underbrace{X_F(t,u)}_{=\rho(t)X_{\bar{F}}(u)}+\underbrace{X_H^\sigma(t,u)}_{=0}]=0, \\
\eea
since $d\bar{F}(X_{\bar{F}})=0$. Hence $\bar{F}(u(t))=c$ for some constant $c$ when $t\leq\frac{1}{2}$. Thus,
\[
0=\int_0^1F(t,u)dt=\int_0^1\rho(t)\bar{F}(u(t))dt=c.
\]
Therefore $\bar{F}(u(t))=c=0$ and since $\bar{F}^{-1}(0)=\Sigma$, we have $u(t)\in\Sigma$ for $t\in[0,\frac{1}{2}]$.
In particular, $u(\frac{1}{2}),u(0)=u(1)\in\Sigma$.

For $t\in[\frac{1}{2},1]$ we have $F(t,\cdot)=0$. 
Thus the loop $u$ solves the equation $\frac{d}{dt}u(t)=X_H^\sigma(t,u)$ on $[\frac{1}{2},1]$,
and therefore, $u(1)=\varphi(u(\frac{1}{2}))$. We conclude that $\varphi(u(\frac{1}{2}))\in\Sigma$.
For $t\in[0,\frac{1}{2}]$, 
$\frac{d}{dt}u(t)=\eta X_F(t,u)+X_H^\sigma(t,u)=\eta X_F(t,u)=\eta R$,
since $X_F|_\Sigma=R$. 
This means that $\varphi(u(\frac{1}{2}))=u(1)=u(0)\in L_{u(\frac{1}{2})}$.
Thus $u(\frac{1}{2})$ is a magnetic leaf-wise intersection point.
\end{proof}

\subsection{Floer homology for $\A_\theta$}\label{sec:Flo(A_m)}
In this subsection, we show that $\FH(\A_\theta)$ is well-defined.
We assume that the readers are familiar with the construction in Floer theory
which can be found in \cite{Sal}.
Throughout this subsection, we follow the strategy in \cite{CF09} with minor modifications.

\begin{Def}\label{def:Mreg}
Let $\Sigma$ be a non-degenerate hypersurface in $T^*N$ with a defining Hamiltonian $F$.
A diffeomorphism $\varphi=\phi_{X^\sigma_H}^1$ 
or a pair $(H,\theta)\in\Hc\times\mathcal P$
is called {\em regular} with respect to $F\in\D(\Sigma)$ if
\begin{enumerate}
\item $\A_\theta=\A^F_{H,\theta}$ is Morse;
\item $\varphi$ has no periodic leaf-wise intersection points.
\end{enumerate}
For a given non-degenerate closed hypersurface $\Sigma$, $\varphi$ is regular
for generic $(H,\theta)\in\Hc\times\mathcal P$.
We discuss the regular property further in Appendix \ref{app:gen} and \ref{sec:noper}.
Throughout this article, we assume that $\varphi$ is regular.
\end{Def}

\begin{Rmk}
In order to define gradient flow lines, 
we need an $\R/\Z$-parametrized almost complex structure $J(t)$
which is compatible with the $\R/\Z$-parametrized symplectic form $\om_\sigma$.
This means that
\bean
g_t(\,\cdot\,,\,\cdot\,):=\om_\sigma(\,\cdot\,,J(t)\,\cdot\,)
\eea
defines a $\R/\Z$-parametrized inner product on $T^*N$.
We denote the set of such almost complex structures as $\J_\sigma$.

Given $J(t)\in\J_\sigma$, we denote by $\nabla_J\A_\theta$ the gradient of $\A_\theta$ with respect to the inner product
\bea\label{eqn:metricgJ}
\mathfrak g_J\big((\hat u_1,\hat\eta_1),(\hat u_2,\hat\eta_2)\big)
:=\int_0^1 g_t(\hat u_1,\hat u_2)dt+\hat\eta_1\hat\eta_2,
\eea
where $(\hat u_i,\hat\eta_i)\in T_{(u,\eta)}(\L\times\R)$ for $i=1,2$.
One can check that
\bean
\nabla_J\A_\theta(u,\eta)=
\left(
\begin{array}{cc}
-J(t,u)\big(\frac{d}{dt}u-X_H^\sigma(t,u)-\eta X_F(t,u)\big) \\
-\int_0^1F(t,u)dt
\end{array}
\right).
\eea
\end{Rmk}

\begin{Def}
A positive gradient flow line of $\A_{\theta}$ with respect to $J(t)\in\J_\sigma$ 
is a map $w=(u,\eta)\in C^\infty(\R,\L\times\R)$ solving the ODE
\bean
\frac{d}{ds}w(s)-\nabla_J\A_{\theta}(w(s))=0.
\eea
According to Floer's interpretation, this means that $u$ and $\eta$ are smooth maps
$u:\R\times (\R/\Z)\to T^*N$ and $\eta:\R\to\R$ satisfying
\bea\label{eqn:gradeqnm}
\left.
\begin{array}{cc}
\p_su+J(t,u)\big(\p_t u-X_H^\sigma(t,u)-\eta X_F(t,u)\big)=0 \\
\frac{d}{ds}\eta+\int_0^1F(t,u)dt=0.
\end{array}
\right\}
\eea
\end{Def}

\begin{Def}
The energy of a map $w\in C^\infty(\R,\L\times\R)$ is defined as
\bean
E(w):=\intinf\|\frac{d}{ds}w(s)\|^2_J ds,
\eea
where $\|\cdot\|_J=\sqrt{\mathfrak g_J(\cdot,\cdot)}.$
\end{Def}

Note that the energy of a given gradient flow line $w\in C^\infty(\R,\L\times\R)$ 
with limit conditions $\lim_{s\to\pm\infty}w(s)=w_\pm\in\Crit(\A_\theta)$ become
\bean
E(w)&=\intinf\|\frac{d}{ds}w(s)\|^2_J ds
=\intinf\mathfrak g_J(\nabla_J\A_\theta(w(s)),\frac{d}{ds}w(s))ds \\
&=\intinf\frac{d}{ds}\A_\theta(w(s))ds
=\A_\theta(w_+)-\A_\theta(w_-).
\eea

\begin{Rmk}\label{rmk:compact}
The most delicate issue in constructing $\FH(\A_\theta)$ 
is the compactness of the moduli space.
The moduli space of gradient flow lines $w(s)=(u,\eta)$ of $\A_\theta$
with the asymptotic conditions $\lim_{s\to\pm\infty}w(s)=w_{\pm}\in\Crit(\A_\theta)$
can be compactified up to
breaking of gradient flow lines, see Theorem \ref{thm:welldefAm}.
There are three analytic difficulties we have to overcome:
\begin{enumerate}
\item a uniform $L^\infty$ bound on $u\in\L$;
\item a uniform $L^\infty$ bound on $\eta\in\R$;
\item a uniform $L^\infty$ bound on the derivatives of $u\in\L$.
\end{enumerate}
Properties (1) and (3) are non-trivial but standard problems in Floer theory and
property (2) is also treated in \cite{CF09, AF09}.
However our gradient flow equation (\ref{eqn:gradeqnm}) is 
slightly generalized with the data $\sigma$.
Most of the remaining of this subsection 
is devoted to show property (2) in $\A_\theta$ case.
\end{Rmk}

\begin{Def}\label{def:c(m)}
Define a map $c:\Hc\times\mathcal P\to[0,\infty)$ by
\bean
c(H,\theta):=\sup_{(t,u)\in \R/\Z\times\L}
\left|\int_0^1(\widetilde\lambda+\widetilde\tau^*\theta_t)(\widetilde u(t))
[\widetilde X_H^\sigma(t,\widetilde u)]-H(t,u(t))dt\right|.
\eea
Note that $\widetilde\lambda+\widetilde\tau^*\theta$ 
is a primitive of $\widetilde\om_\sigma$ on the universal cover $T^*\widetilde N$.
We remind that $H$ is compactly supported,
and hence $c(H,\theta)$ is finite.
\end{Def}

\begin{Lemma}\label{lem:fund}
There exists $\epsilon>0$, $\widetilde c>0$ such that 
if $(u,\eta)\in \L\times\R$ satisfies
$\|\nabla_J\A_\theta(u,\eta)\|_J\leq\epsilon$ then
\beq\label{eqn:fund}
|\eta|\leq\widetilde c(|\A_\theta(u,\eta)|+1).
\eeq
Here $\|\cdot\|_J=\sqrt{\mathfrak g_J(\cdot,\cdot)}.$
\end{Lemma}

\begin{proof}
The proof consists of 3 steps.

\vspace{2mm}

{\bf Step 1} : {\em There exist $\delta>0$ and a constant $c_{\delta}<\infty$ 
such that if $u\in\L$ satisfies $(t,u(t))\in U_{\delta}=F^{-1}(-\delta,\delta)$ for all $t\in[0,\frac{1}{2}]$, 
then}
\bean
|\eta|\leq c_{\delta}\left(|\A_\theta(u,\eta)|+\|\nabla_J\A_\theta(u,\eta)\|_J+1\right).
\eea

\vspace{2mm}

There exists $\delta>0$ such that
\bean
\lambda(X_F(p))\geq\frac{1}{2}+\delta, \quad \forall\,p\in U_\delta
\eea
We compute
\bean
|\A_\theta(u,\eta)|
=&\left|\int_0^1u^*\lambda-\int_0^1H(t,u(t))dt-\eta\int_0^1F(t,u(t))dt+\Bc_\theta(u(t))\right| \\
=&\left|\int_0^1\widetilde u^*(\widetilde\lambda+\widetilde\tau^*\theta_t)-\int_0^1H(t,u(t))dt-\eta\int_0^1F(t,u(t))dt\right| \\
=&\bigg|\int_0^1(\widetilde\lambda+\widetilde\tau^*\theta_t)(\widetilde u(t))
[\frac{d}{dt}\widetilde u-\eta\widetilde X_F(t,\widetilde u)-\widetilde X_H^\sigma(t,\widetilde u)]dt \\
&+\eta\int_0^1\underbrace{\lambda(u(t))[X_F(t,u)]}_{\geq\frac{1}{2}+\delta}-\underbrace{F(t,u(t))}_{\leq\delta}dt \\
&+\int_0^1(\widetilde\lambda+\widetilde\tau^*\theta_t)(\widetilde u(t))[\widetilde X_H^\sigma(t,\widetilde u)]-H(t,u(t))dt\bigg| \\
\geq&\frac{1}{2}|\eta|-c_{\theta,\delta}' \|\frac{d}{dt}u-X_H^\sigma(t,u)-\eta X_F(t,u)\|_1-c(H,\theta) \\
\geq&\frac{1}{2}|\eta|-c_{\theta,\delta}' \|\frac{d}{dt}u-X_H^\sigma(t,u)-\eta X_F(t,u)\|_2-c(H,\theta) \\
\geq&\frac{1}{2}|\eta|-c_{\theta,\delta}' \|\nabla_J\A_\theta(u,\eta)\|_J-c(H,\theta) \\
\eea
Here $\widetilde{[-]}$ means the lifting of $[-]$ to the universal cover 
and $c_{\theta,\delta}':=\|(\widetilde\lambda+\widetilde\tau^*\theta)|_{\widetilde U_\delta}\|_{\infty}$.
Set $c_\delta=\max\{2c_{\theta,\delta}',2c(H,\theta),2\}$ then
this inequality proves Step 1.
Note that the finiteness of $c_{\theta,\delta}'$ is guaranteed by the simple estimate as follows
\bean
c_{\theta,\delta}'=&\|\widetilde\lambda+\tau^*\theta|_{\widetilde U_\delta}\|_{\infty} \\
\leq&\|\widetilde\lambda|_{\widetilde U_\delta}\|_\infty
+\|\widetilde\tau^*\theta|_{\widetilde U_\delta}\|_\infty \\
=&\|\lambda|_{U_\delta}\|_\infty+\|\theta|_{\widetilde\tau(\widetilde U_\delta)}\|_\infty \\
\leq&\|\lambda|_{U_\delta}\|_\infty+\|\theta\|_\infty \\
<&\infty.
\eea

\vspace{2mm}

{\bf Step 2} : {\em There exists $\epsilon=\epsilon(\delta)$ with the following property.
If there exists $t\in[0,\frac{1}{2}]$ with $F(t,u(t))\geq\delta$ then $\|\nabla_J\A_\theta(u,\eta)\|_J\geq\epsilon$.}

\vspace{2mm}

If in addition $F(t,u(t))\geq\frac{\delta}{2}$ holds for all $t\in[0,\frac{1}{2}]$ then
\bean
\|\nabla_J\A_{\theta(s)}(u,\eta)\|_J\geq\left|\int_0^1F(t,u(t))dt\right|\geq\frac{\delta}{2}\int_0^1\rho(t)dt=\frac{\delta}{2}.
\eea
Otherwise there exists $t'\in[0,\frac{1}{2}]$ with $F(t,u(t'))\leq\frac{\delta}{2}$. 
Thus we may assume without loss of generality that $0\leq a<b\leq\frac{1}{2}$ 
and $\frac{\delta}{2}\leq|F(t,u(t))|\leq\delta$ for all $t\in[a,b]$,
and $|F(b,u(b))-F(a,u(a))|=\frac{\delta}{2}$.
Then we estimate
\bean
\|\nabla_J\A_\theta(u,\eta)\|_J\geq&\|\frac{d}{dt}u-X_H^\sigma(t,u)-\eta X_F(t,u)\|_2 \\
\geq&\bigg(\int_a^b\|\frac{d}{dt}u-\underbrace{X_H^\sigma(t,u)}_{=0}-\eta X_F(t,u)\|^2dt\bigg)^{\frac{1}{2}} \\
\geq&\int_a^b\|\frac{d}{dt}u-\eta X_F(t,u)\|dt \\
\geq&\frac{1}{\|\nabla F\|_{\infty}}\int_a^b\|\nabla F(t,u(t))\|\cdot\|\frac{d}{dt}u-\eta X_F(t,u)\|dt \\
\geq&\frac{1}{\|\nabla F\|_{\infty}}\int_a^b\left|g_t\left(\nabla F(t,u(t)),\frac{d}{dt}u-\eta X_F(t,u)\right)\right|dt \\
=&\frac{1}{\|\nabla F\|_{\infty}}\int_a^b\left|g_t\left(\nabla F(t,u(t)),\frac{d}{dt}u\right)\right|dt \\
=&\frac{1}{\|\nabla F\|_{\infty}}\int_a^b\left|\frac{d}{dt}F(t,u(t))\right|dt \\
\geq&\frac{1}{\|\nabla F\|_{\infty}}\int_a^b\frac{d}{dt}F(t,u(t))dt \\
=&\frac{\delta}{2\|\nabla F\|_{\infty}}. \\
\eea
Since $\|\nabla F\|_{\infty}$ is bounded from above, 
we set $\epsilon(\delta):=\min\{\frac{\delta}{2},\frac{\delta}{2\|\nabla F\|_{\infty}}\}$. This proves Step 2.

\vspace{2mm}

{\bf Step 3} : {\em We prove the lemma.}

\vspace{2mm}

Choose $\delta$ as in Step 1, $\epsilon=\epsilon(\delta)$ as in Step 2 and 
\bean
\widetilde c=c_\delta(\epsilon+1).
\eea
Assume that $\|\nabla_J\A_\theta(u,\eta)\|_J\leq\epsilon$ then
\bean
|\eta|\leq c_{\delta}\left(|\A_\theta(u,\eta)|+\|\nabla_J\A_\theta(u,\eta)\|_J+1\right)
\leq\widetilde c(|\A_\theta(u,\eta)|+1).
\eea
This proves the lemma.
\end{proof}


\begin{Prop}\label{prop:etaesusu}
Let $w_\pm\in\Crit(\A_\theta)$ and $w=(u,\eta)$ be a gradient flow line of $\A_\theta$ with
\bean
\lim_{s\to\pm\infty}w(s)=w_\pm.
\eea
Then there exists a constant $\kappa=\kappa(w_-,w_+)$ satisfying
$\|\eta\|_\infty\leq\kappa.$
\end{Prop}

\begin{proof}
Let $\epsilon$ be as in Lemma \ref{lem:fund}. For $l\in\R$, let $\nu_w(l)\geq 0$ be defined by
\bean
\nu_w(l):=\inf\{\nu\geq0\,:\,\|\nabla_J\A_\theta[w(l+\nu)]\|_J<\epsilon\}.
\eea
Then $\nu_w(l)$ is uniformly bounded as follows,
\bean
\A_\theta(w_+)-\A_\theta(w_-)
&=\intinf\|\frac{d}{ds} w(s)\|^2ds \\
&=\intinf\|\nabla_J\A_\theta(w(s))\|_J^2 ds \\
&\geq\int_l^{l+\nu_w(l)}\underbrace{\|\nabla_J\A_\theta(w(s))\|_J^2}_{\geq\epsilon^2} ds \\
&\geq\epsilon^2\nu_w(l).
\eea

Now, we set
\bean
\|F\|_\infty=\max_{(t,x)\in \R/\Z\times T^*N}|F(t,x)|,\quad K=\max\{|\A_\theta(w_+)|,|\A_\theta(w_-)|\}.
\eea
Since $F\in\D(\Sigma)$, see Definition \ref{def:defham}, $\|F\|_\infty$ is finite.
By definition of $\nu_w(l)$, we get $\|\nabla_J\A_\theta[w(l+\nu_w(l))]\|_J=\epsilon$.
Then we can use Proposition \ref{lem:fund} to obtain the following estimate
\bean
|\eta(l+\nu_w(l))|&\leq\widetilde c(|\A_\theta[w(l+\nu_w(l))]|+1) \\
&\leq\widetilde c(K+1).
\eea
By using the above estimate we get
\bean
|\eta(l)|&\leq|\eta(l+\nu_w(l))|+\left|\int_l^{l+\nu_w(l)}\dot\eta(s)ds\right| \\
&\leq|\eta(l+\nu_w(l))|+\left|\int_l^{l+\nu_w(l)}\int_0^1F(t,u(t))dt\ ds\right| \\
&\leq\widetilde c(K+1)+\|F\|_\infty\nu_w(l) \\
&\leq\widetilde c(K+1)+\frac{\|F\|_\infty(\A_\theta(w_+)-\A_\theta(w_-))}{\epsilon^2}.
\eea
The right hand side is independent of the gradient flow line $w$ and $l\in\R$.
Let 
\bean
\kappa=\widetilde c(K+1)+\frac{\|F\|_\infty(\A_\theta(w_+)-\A_\theta(w_-))}{\epsilon^2},
\eea
then this proves the proposition.
\end{proof}

\begin{Prop}\label{prop:asphe}
If $\sigma_0\in\Om^2(N)$ is $\widetilde d$-bounded then $(\om_{\rm std}+\tau^*\sigma_0)|_{\pi_2(T^*N)}=0$.
\end{Prop}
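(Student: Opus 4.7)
My plan is to exploit exactness upstairs on the universal cover. Write $\om_\m = d\lambda + \beta(t)\,\tau^*\sigma$, so it suffices to show that each piece integrates to zero over any sphere $v\colon S^2\to T^*N$. The $d\lambda$ term vanishes immediately by Stokes' theorem since $S^2$ is closed.

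For the magnetic term, I would first reduce to the base: $\int_{S^2} v^*\tau^*\sigma = \int_{S^2} u^*\sigma$ where $u:=\tau\circ v\colon S^2\to N$. Now use that $S^2$ is simply connected, so $u$ admits a lift $\widetilde u\colon S^2\to\widetilde N$ through the universal covering $p\colon\widetilde N\to N$. Since $p\circ\widetilde u=u$ and $\widetilde\sigma=p^*\sigma$, naturality gives
\begin{equation}\nonumber
\int_{S^2} u^*\sigma=\int_{S^2}\widetilde u^*\widetilde\sigma.
\end{equation}

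The $\widetilde d$-boundedness hypothesis supplies $\theta\in\Om^1(\widetilde N)$ with $d\theta=\widetilde\sigma$ (only the exactness, not the boundedness, is needed for this proposition). Then $\widetilde u^*\widetilde\sigma = d(\widetilde u^*\theta)$ on $S^2$, and a final application of Stokes' theorem on the closed surface $S^2$ forces the integral to vanish. Putting the two pieces together yields $\int_{S^2}v^*\om_\m=0$ for every $v\in\pi_2(T^*N)$, as claimed.

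I do not expect any serious obstacle: the only subtlety is remembering that $\om_\m$ is $S^1$-parameterized, so one argues pointwise in $t$ (the factor $\beta(t)$ just scales the vanishing integral). The lifting step is the conceptual heart, but it is automatic from $\pi_1(S^2)=0$.
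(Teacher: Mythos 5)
Your proof is correct and takes essentially the same route as the paper's: split $\om_\m$ into $d\lambda$ plus the magnetic part, kill the first by Stokes, lift the second to the universal cover (using $\pi_1(S^2)=0$), invoke $\widetilde\sigma=d\theta$, and apply Stokes again. Your two added observations — that only exactness (not boundedness) of $\widetilde\sigma$ is used here, and that the $t$-dependence through $\beta(t)$ is just a scalar factor — are both accurate and slightly sharpen the paper's exposition.
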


\begin{proof}
First choose a map $f:S^2\to T^*N$, then it suffices to show that $\int_{f(S^2)}\om_{\rm std}+\tau^*\sigma_0=0$.
\bean
\int_{f(S^2)}\om_{\rm std}+\tau^*\sigma_0
&=\int_{f(S^2)}\om_{\rm std}+\int_{\tau\circ f(S^2)}\sigma_0
=\int_{f(S^2)}d\lambda+\int_{\widetilde{\tau\circ f}(S^2)}\widetilde\sigma_0 \\ 
&=\int_{\p f(S^2)}\lambda+\int_{\widetilde{\tau\circ f}(S^2)}d\theta_0 
=\int_{\p\widetilde{\tau\circ f}(S^2)}\theta_0 \\
&=0.
\eea
Here $\widetilde{\tau\circ f}:S^2\to\widetilde N$ 
is a lifting of $\tau\circ f:S^2\to N$.
\end{proof}

\begin{Thm}\label{thm:compofmod}
Let $w^\nu=(u^\nu,\eta^\nu)$ be a sequence of gradient flow lines for which there exists $a<b$ such that
\[
a\leq\A_\theta\big(w^\nu(s)\big)\leq b,\quad\forall s\in\R.
\]
Then the sequence $\{w^\nu\}$ has a subsequence 
that converges in $C^\infty_{\rm loc}(\R,\L\times\R)$.
\end{Thm}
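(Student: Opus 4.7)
My plan is to follow the standard Rabinowitz Floer compactness strategy of Cieliebak--Frauenfelder \cite{CF09}, carefully incorporating the magnetic term $\Bc_\m$ and the $t$-dependence of $\om_\m$. The proof splits into three essentially independent ingredients: a uniform bound on the Lagrange multipliers $\eta_n(s)$, a confinement of the loops $u_n(s)$ in a fixed compact subset of $T^*N$, and derivative bounds sufficient for elliptic bootstrapping.

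First I would exploit the usual energy identity. Along a positive gradient flow line $\tfrac{d}{ds}\A_{\m}(w_n(s)) = \|\nabla_J\A_\m(w_n(s))\|_{\mathfrak g_J}^2$, so the action bound $a\leq\A_{\m(s)}(w_n(s))\leq b$ yields a uniform $L^2$-energy bound $\int_\R \|\partial_s w_n\|_{\mathfrak g_J}^2\, ds \leq b-a$ (together with the standard contribution of the $s$-derivative of $\A_{\m(s)}$ in the continuation setting, which is controlled by the compact support of the homotopy). Since $H$ is compactly supported and $F$ equals $\rho(t)(F_U-k)$ with $F_U$ proper, the condition $\partial_s\eta_n=-\int_0^1 F(t,u_n)\,dt$ combined with the energy bound forces $u_n(s,\cdot)$ to remain in a fixed compact set $K\subset T^*N$ for all $n$ and $s$.

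The main obstacle is the $L^\infty$-bound on $\eta_n(s)$, which is the Rabinowitz-specific step. The standard idea is to reconstruct $\eta_n(s)$ from $\A_\m$ by combining the defining condition $X_F|_\Sigma=R_\Sigma$ and the contact normalization $\lambda(R_\Sigma)=1$: when $w_n(s)$ is close to a critical point, $\int_0^1 u_n^*\lambda$ is approximately $\eta_n(s)$ times a positive Reeb-period factor; when it is not, $\|\nabla_J\A_\m(w_n(s))\|$ is bounded below by a positive constant times $|\eta_n(s)|$ for $|\eta_n(s)|$ large, and the energy bound forces the set of times with $|\eta_n|\geq R$ to have uniformly bounded measure. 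The magnetic term introduces a genuinely new estimate: since $\sigma$ is $\widetilde d$-bounded with primitive $\theta$ having bounded sup-norm on $\widetilde N$, one has $|\Bc_\m(u_n)|\leq \|\beta\|_\infty\|\theta\|_\infty\|\partial_t \widetilde u_n\|_{L^1}$, and $\|\partial_t u_n\|_{L^2}$ is controlled by $\|\nabla_J\A_\m\|$ plus $|\eta_n|$ via the first line of \eqref{eqn:gradeqnm}. Inserting this into the identity $\eta_n\int_0^1 F(t,u_n)dt = \int_0^1 u_n^*\lambda - \int_0^1 H\,dt + \Bc_\m(u_n) - \A_\m(w_n)$ and iterating gives an a priori bound on $\eta_n$ depending only on $(N,g,F,H,\m,a,b)$.

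Second, with $\{u_n(s,\cdot)\}\subset K$ and $\{\eta_n(s)\}$ uniformly bounded, I would rule out bubbling by invoking Proposition \ref{prop:asphe}: since $\om_\m$ vanishes on $\pi_2(T^*N)$, no energy can concentrate into a $J$-holomorphic sphere, and the standard Hofer-type alternative yields a uniform $C^0$-bound on $\nabla w_n$ on compact subsets of $\R\times S^1$. Elliptic regularity applied to \eqref{eqn:gradeqnm} then upgrades this to $C^k_{\rm loc}$-bounds for every $k$, and Arzel\`a--Ascoli delivers a $C^\infty_{\rm loc}$-convergent subsequence of $w_n(\cdot+\mu_n)$, completing the proof. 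The main subtlety throughout is that the gradient is taken with respect to the $t$-dependent metric induced by $\om_\m$, which affects none of the estimates qualitatively but must be tracked in the constants.
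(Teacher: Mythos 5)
Your three-ingredient strategy (confinement of $u_n$, uniform $\eta$-bound, derivative bounds via asphericity) is exactly the paper's plan, and your treatment of bubbling via Proposition \ref{prop:asphe} and the isoperimetric control of $\Bc_\m$ via $\|\beta\|_\infty\|\theta\|_\infty$ both match the paper. However, there is a genuine gap in your argument for the $C^0$-bound on $u_n$. You claim that compact support of $H$, properness of $F_U$, and $\partial_s\eta_n = -\int_0^1 F(t,u_n)\,dt$ combined with the energy bound force $u_n$ into a compact set. This does not work: by Definition \ref{def:defham}, the defining Hamiltonian $F\in\D(\Sigma)$ has \emph{compactly supported} $X_F$, hence $F$ is constant (not proper) outside a compact set, so $\int_0^1 F(t,u_n)\,dt$ remains bounded no matter how far $u_n$ escapes, and the energy bound on $\int_\R|\partial_s\eta_n|^2\,ds$ yields no confinement whatsoever. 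The actual mechanism, as the paper says, is convexity at infinity of $(T^*N,\om_\m)$: outside the compact set where $F$ and $H$ are active the gradient equation reduces to $\partial_s u + J\,\partial_t u = 0$, and a maximum principle applied to a plurisubharmonic exhaustion (with a cylindrical almost complex structure at infinity) prevents escape. This is independent of the energy identity and must be run separately.

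A smaller imprecision: you assert that $\|\nabla_J\A_\m(w_n(s))\|$ is bounded below by a constant times $|\eta_n(s)|$ for $|\eta_n(s)|$ large. That is not what Lemma \ref{lem:fund} establishes. The dichotomy in the paper is: either the gradient exceeds a fixed $\epsilon>0$ (because the loop $u$ wanders out of the slab $U_\delta=F^{-1}(-\delta,\delta)$, independently of $\eta$), or the gradient is small and then $|\eta|\leq\bar c(|\A_\m|+1)$ directly. The lower bound on the gradient does not scale with $|\eta|$, and the argument reconstructing $\eta$ uses the contact normalization $\lambda(X_F)\geq\tfrac12+\delta$ on $U_\delta$, not proximity to a critical point. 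The paper also packages $\lambda$ and the magnetic term into the single primitive $\widetilde\lambda_\m=\widetilde\lambda+\widetilde\tau^*\beta\theta$ of $\om_\m$ on $T^*\widetilde N$ before estimating, which streamlines exactly the step you do by hand.
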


\begin{proof}
As we mentioned in Remark \ref{rmk:compact},
we establish compactness by proving the following uniform bounds:
\begin{enumerate}
\item a uniform $L^\infty$ bound on $u^\nu$;
\item a uniform $L^\infty$ bound on $\eta^\nu$;
\item a uniform $L^\infty$ bound on the derivatives of $u^\nu$.
\end{enumerate} 
Property (1) follows from the convexity at infinity of $(T^*N,\om_\sigma)$.
Proposition \ref{prop:etaesusu} implies that $\eta^\nu$ is uniformly bounded
which guarantees property (2).
If the derivatives would explode we then obtain a non-constant holomorphic sphere as limit
which is impossible by Proposition \ref{prop:asphe}.
Hence we conclude property (3).
With (1)-(3), the bootstrapping argument gives $C^\infty_{\rm loc}$-convergence,
see \cite[Appendix B.4]{MS}.
\end{proof}

\begin{Thm}\label{thm:welldefAm}
Let $\Sigma\subset T^*N$ be a closed hypersurface with a defining Hamiltonian $F$.
For a generic choice of $(H,\theta)$, the functional $\A_\theta$ is Morse 
and $\FH(\A_\theta)$ is well-defined.
\end{Thm}

\begin{proof}
Now we are ready to define the Floer homology of $\A_\theta$.
By choosing a generic pair $(H,\theta)\in\Hc\times\mathcal P$,
we may assume that $\A_\theta$ is Morse.
Take a critical point $w=(u,\eta)$ of $\A_\theta$.
Since $\A_\theta$ is Morse, 
$u:\R/\Z\to T^*N$ is a non-degenerate orbit.
Thus we can associate to $u$ a well-defined integer 
the {\em Conley-Zehnder index} $\CZ(u)$. 
See \cite{SZ} and \cite{AS06} for the definition and
details of the Conley-Zehnder index.

Let us define $\mu(w):=\CZ(u)$ and 
denote by 
\bean
\Crit^{(a,b)}(\A_\theta)&:=\{(u,\eta)\in\Crit(\A_\theta)\,:\,\A_\theta(u,\eta)\in(a,b)\};\\
\Crit^{(a,b)}_k(\A_\theta)&:=\{w\in\Crit^{(a,b)}(\A_\theta)\,:\, \mu(w)=k\}.
\eea
Now we define
\bean
\FC_k^{(a,b)}(\A_\theta):=\Crit_k^{(a,b)}(\A_\theta)\otimes\Z_2.
\eea
For a generic almost complex structure $J(t)\in\J_\sigma$ 
and given $w_\pm\in\Crit^{(a,b)}(\A_\theta)$, we denote by
\bean
\widehat\M(w_-,w_+)&:=\{w(s)\,:\,w\text{ satisfies }(\ref{eqn:gradeqnm}),\ \lim_{s\to\pm\infty}w(s)=w_\pm\};\\
\M(w_-,w_+)&:=\widehat\M(w_-,w_+)/\R.
\eea
The above $\R$-action is given by translating the $s$-coordinate.
Suppose further that the almost complex structure $J(t)$ is generic,
so that $\M(w_-,w_+)$ is a smooth manifold of dimension
\bean
\dim\M(w_-,w_+)=\mu(w_-)-\mu(w_+)-1.
\eea

For the compactification of the moduli space $\widehat\M(w_-,w_+)$,
we remind the {\em Floer-Gromov convergence} as follows.
A sequence $\{(u^\nu,\eta^\nu)\}_{\nu\in\N}$ in $\widehat\M(w_-,w_+)$ is said to  
Floer-Gromov converges to a broken flow line $\{(u_j,\eta_j)\}_{j=1}^m$ with
\bean
(u_j,\eta_j)\in\widehat\M(w_{j-1},w_j),\quad j\in\{1,\dots,m\},
\eea
where $w_i\in\Crit(\A_\theta)$ and $w_0=w_-$, $w_m=w_+$,
if there exist $s_j^\nu\in\R$ such that reparametrized 
sequence $(u^\nu,\eta^\nu)(s+s_j^\nu)$ converges to $(u_j,\eta_j)(s)$
for all $j=1,\dots,m$ in the $C^\infty_{\rm loc}$-topology.
By Theorem \ref{thm:compofmod}, we are now able to consider 
a compactification of $\widehat\M(w_-,w_+)$ with respect to 
the Floer-Gromov convergence.
We mention two following statements, due to Floer \cite{Fl2,Fl1}, which is crucial in constructing
various type of Floer homologies, also containing $\A_\theta$-version. 
\begin{enumerate}
\item If $\mu(w_-)-\mu(w_+)=1$, then
the moduli space $\widehat\M(w_-,w_+)$ is a 1-dimensional compact
smooth manifold with respect to the Floer-Gromov convergence and
hence $\M(w_-,w_+)$ is finite.
\item Let $\overline\M(w_-,w_+)$ be the compactification of 
$\M(w_-,w_+)$ with respect to the Floer-Gromov convergence. 
If $\mu(w_-)-\mu(w_+)=2$, then $\overline\M(w_-,w_+)$ is a compact 
1-dimensional manifold whose boundary is given by
\[
\p\overline\M(w_-,w_+)=\bigcup_{w}\M(w_-,w)\times\M(w,w_+),
\]
where $w\in\Crit(\A_\theta)$ runs over $\mu(w_-)-\mu(w)=1$.
\end{enumerate}

By virtue of (1), the boundary operator $\p_k:\FC_k^{(a,b)}(\A_\theta)\to\FC^{(a,b)}_{k-1}(\A_\theta)$
is defined by
\bean
\p_k w_-:=\sum_{\mu(w_+)=k-1}\#_2\M(w_-,w_+)\cdot w_+,
\eea
where $\#_2$ means $\Z_2$-counting.
This boundary operator satisfies $\p_{k-1}\circ\p_k=0$ in $\Z_2$-coefficient
which is guaranteed by (2).
Then the resulting filtered Floer homology group is defined by
\bean
\FH^{(a,b)}_*(\A_\theta):=\H_*(\FC^{(a,b)}_\bullet(\A_\theta),\p_*).
\eea
By taking direct and inverse limit, we obtain
\bean
\FH_*(\A_\theta):=\lim_{\stackrel\longrightarrow a} \lim_{\stackrel \longleftarrow
b}\FH_*^{(-a,b)}(\A_\theta), \qquad a,b\to\infty.
\eea
\end{proof}

\subsection{Continuation map between $\FH(\A^F_H)$ and $\FH(\A_\theta)$.}
In this section, we construct a continuation homomorphism
\bean
\widetilde\Psi^\sigma:\FH(\A^F_H)\to\FH(\A_\theta)
\eea
by counting gradient flow lines of the $s$-dependent perturbed Rabinowitz action functional.
Here 
\bea\label{eqn:m(s)def}
\sigma(s):=\gamma(s)\sigma,\qquad
\theta(s):=\gamma(s)\theta
\eea
are $s$-dependent magnetic perturbation
where $\gamma:\R\to[0,1]$ is a cut-off function 
\bea\label{eqn:gamma(s)}
\gamma(s)=
\left\{
\begin{array}{cc}
0\quad \text{for} \quad s\leq 0 \\
1\quad \text{for} \quad s\geq 1
\end{array}
\right.
\eea
and $0\leq\dot\gamma(s)\leq2$ for all $s\in\R$.
Then the corresponding action functional is
\bean
\A_{\theta(s)}(u,\eta)=\A^F_H(u,\eta)+\gamma(s)\Bc_\theta(u)
\eea
where $\A^F_H(u,\eta)=\int_0^1u^*\lambda-\eta\int_0^1F(t,u(t))dt-\int_0^1H(t,u(t))dt$.

Now we consider the $(s,t)$-dependent almost complex structure $J(s,t)$ on $T^*N$ such that
$J(s,t)\in\J_{\sigma(s)}$ for all $s\in[0,1]$ and $J(s,t)$ is independent of $s$ for $s\leq-1$
and $s\geq1$. This almost complex structure induces the $(s,t)$-dependent inner product on $T^*N$
\bean
g_{s,t}(\,\cdot\,,\,\cdot\,):=\om_{\sigma(s)}(\,\cdot\,,J(s,t)\,\cdot\,),
\eea
and the following $s$-dependent inner product on $\L\times\R$
\bea\label{eqn:m(s)metric}
\mathfrak g_s\big((\hat u_1,\hat\eta_1),(\hat u_2,\hat\eta_2)\big)
:=\int_0^1 g_{s,t}(\hat u_1,\hat u_2)dt+\hat\eta_1\hat\eta_2,
\eea
where $(\hat u_i,\hat\eta_i)\in T_{(u,\eta)}(\L\times\R)$ for $i=1,2$.
With the above metric, we obtain
\bean
\nabla_s\A_{\theta(s)}(u,\eta)=
\left(
\begin{array}{cc}
-J(s,t,u)\big(\p_t u-\eta X_F(t,u)-X_H^{\sigma(s)}(t,u)\big) \\
-\int_0^1F(t,u)dt.
\end{array}
\right)
\eea
Then the gradient flow line $w=(u,\eta)\in C^\infty(\R\times (\R/\Z),T^*N)\times C^\infty(\R,\R)$ satisfies
\bea\label{eqn:grad_m(s)}
\left.
\begin{array}{cc}
\p_su+J(s,t,u)\big(\p_t u-\eta X_F(t,u)-X_H^{\sigma(s)}(t,u)\big)=0 \\
\frac{d}{ds}\eta+\int_0^1F(t,u)dt=0,
\end{array}
\right\}
\eea
with energy
\bean
E(w):=\intinf\|\frac{d}{ds}w(s)\|_s^2ds,
\eea
where $\|\cdot\|_s:=\sqrt{\mathfrak g_s(\cdot,\cdot)}.$

In order to construct the continuation homomorphism $\widetilde\Psi^\sigma$,
it suffices to show that the Lagrange multiplier $\eta$
and the energy of the time-dependent gradient flow line are uniformly bounded.
For this purpose, we need the following preliminary statements.

\begin{Lemma}\label{lem:fund2}
There exists $\epsilon>0$ and $\widetilde c>0$ such that 
if $(u,\eta)\in C^\infty(\R/\Z,T^*N)\times\R$ satisfies
$\|\nabla_s\A_{\theta(s)}(u,\eta)\|_s<\epsilon$ then
\beq\label{eqn:fund2}
|\eta|\leq\widetilde c(|\A_{\theta(s)}(u,\eta)|+1).
\eeq
Here $\|\cdot\|_s:=\sqrt{\mathfrak g_s(\cdot,\cdot)}.$
\end{Lemma}

\begin{proof}
The proof is basically the same as in Lemma \ref{lem:fund} 
by considering $\sigma(s)$ instead of $\sigma$.
Here we omit the proof.
\end{proof}

\begin{Def}
For a magnetic perturbation $\sigma\in\Mf$ 
and its primitive $\theta\in\mathcal P$, see Definition \ref{def:betadef}, 
the {\em isoperimetric constant} $C=C(\theta)$ is defined by
\bean
C:=\|\theta\|_\infty=\max_{t\in \R/\Z} \|\theta_t\|_\infty.
\eea
\end{Def}

In order to state the next proposition, we introduce the notations as follows:
\bean
d_\sigma=&d_{H,\sigma}:=\sup_{s\in\R}\|X_H^{\sigma(s)}\|_{\infty};\\
d_{F}:=&\|X_F\|_{\infty}.
\eea

\begin{Prop}\label{prop:conti}
Let $\epsilon$, $\widetilde c$ be the same as in Lemma \ref{lem:fund2}.
Let $w_-\in\Crit(\A^F_H)$, $w_+\in\Crit(\A_\theta)$ and $w=(u,\eta)$ be a gradient flow line of $\A_{\theta(s)}$ with
$\lim_{s\to\pm\infty}w=w_{\pm}$. 
If the isoperimetric constant $C=C(\theta)$ satisfies
\bea\label{eqn:Ccond}
C&\leq\frac{1}{4}; \\
\left(8\widetilde cd_{F}C+2\widetilde cd_{F}+4\frac{d_{F}}{\epsilon^2}\|F\|_{\infty}\right)C&\leq\frac{1}{2},
\eea
then there exists a constant $\kappa=\kappa(w_-,w_+)$ such that
\bean
\|\eta\|_{\infty}\leq\kappa.
\eea
\end{Prop}

\begin{proof} 
We prove the proposition in 3 steps.

\vspace{2mm}

{\bf Step 1} : {\em Let us first bound the energy of $w$ in terms of $\|\eta\|_{\infty}$.}

\vspace{2mm}

\bea\label{eqn:EesdotA}
E(w)=&\intinf\|\frac{d}{ds}w(s)\|_s^2ds \\
=&\intinf\langle\frac{d}{ds}w(s),\nabla_s\A_{\theta(s)}(w(s))\rangle_sds \\
=&\intinf\frac{d}{ds}\A_{\theta(s)}(w(s))ds-\intinf\dot\A_{\theta(s)}(w(s))ds \\
=&\A_{\theta(1)}(w_+)-\A_{\theta(0)}(w_-)-\intinf\dot\A_{\theta(s)}(w(s))ds.
\eea
We estimate the last term in (\ref{eqn:EesdotA}) by using the isoperimetric constant $C$
\bea\label{eqn:dotAesC}
\left|\intinf\dot\A_{\theta(s)}(w(s))ds\right|\leq&\intinf\left|\dot\A_{\theta(s)}(w(s))\right|ds \\
=&\intinf\dot\gamma(s)\left|\int_{\R/\Z}\widetilde u^*\theta_t dt\right|ds \\
\leq&\intinf\dot\gamma(s)C\int_{\R/\Z}|\p_t u|_{s,t}dt\ ds.
\eea
Here $|\cdot|_{s,t}:=\sqrt{g_{s,t}(\cdot,\cdot)}$.
From the gradient flow equation (\ref{eqn:grad_m(s)}), we get
\bea\label{eqn:vargradeqn}
\p_t u=J(s,t,u)\p_su+\eta X_F(t,u)+X_H^{\sigma(s)}(t,u).
\eea
By inserting (\ref{eqn:vargradeqn}) into the last term in (\ref{eqn:dotAesC}), we then obtain
\bea\label{eqn:dotA}
\intinf\left|\dot\A_{\theta(s)}(w(s))\right|ds\leq&\intinf\dot\gamma(s)C\int_{\R/\Z}|\p_t u|_{s,t}dt\ ds \\
=&\intinf\underbrace{\dot\gamma(s)}_{\leq2}C\int_{\R/\Z}|J(s,t)\p_su+\eta X_F(t,u)+X_H^{\sigma(s)}(t,u)|_{s,t}dt\ ds \\
\leq&2C\int_0^1\int_{\R/\Z}\left(|\p_su|_{s,t}+|\eta| |X_F(t,u)|_{s,t}+|X_H^{\sigma(s)}(t,u)|_{s,t}\right)dt\ ds \\
\leq&2C\int_0^1\int_{\R/\Z}\left(|\p_su|^2_{s,t}+1+|\eta| \|X_F\|_{\infty}+\|X_H^{\sigma(s)}\|_{\infty}\right)dt\ ds \\
=&2CE(u)+2C+2d_\sigma C+2\|\eta\|_{\infty} d_{F}C \\
\leq&2CE(w)+2C+2d_\sigma C+2\|\eta\|_{\infty} d_{F}C.
\eea

Now by combining the above estimates (\ref{eqn:EesdotA}) and (\ref{eqn:dotA}), 
we deduce
\bean
E(w)=&\A_{\theta(1)}(w_+)-\A_{\theta(0)}(w_-)-\intinf\dot\A_{\theta(s)}(w(s))ds \\
\leq&\A_{\theta(1)}(w_+)-\A_{\theta(0)}(w_-)+2CE(w)+2C+2d_\sigma C+2\|\eta\|_{\infty} d_{F}C.
\eea
By the assumption on the isoperimetric constant, $C\leq\frac{1}{4}$ and we have
\bea\label{eqn:E<eta}
E(w)\leq& 2\A_{\theta(1)}(w_+)-2\A_{\theta(0)}(w_-)+4C+4d_\sigma C+4\|\eta\|_{\infty} d_{F}C \\
=&2\Delta+4C+4d_\sigma C+4\|\eta\|_{\infty} d_{F}C,
\eea
where $\Delta:=\A_{\theta(1)}(w_+)-\A_{\theta(0)}(w_-)$.
This finishes Step 1.

\vspace{2mm}

{\bf Step 2} : Let $\epsilon$ be as in Lemma \ref{lem:fund}. For $l\in\R$ let $\nu_w(l)\geq0$ be defined by
\bean
\nu_w(l):=\inf\left\{\nu\geq0:\|\nabla_s\A_{\theta(l+\nu)}\big(w(l+\nu)\big)\|_s<\epsilon\right\}.
\eea
In this step we bound $\nu_w(l)$ in terms of $\|\eta\|_{\infty}$ for all $l\in R$ as follows
\bea\label{eqn:nuesenergy}
E(w)=&\intinf\|\frac{d}{ds}w(s)\|^2_sds \\
=&\intinf\|\nabla_s\A_{\theta(s)}\|^2_sds \\
\geq&\int_l^{l+\nu_w(l)}\underbrace{\|\nabla_s\A_{\theta(s)}\|^2_s}_{\geq\epsilon^2}ds \\
\geq&\epsilon^2\nu_w(l).
\eea
Step 1 and the above estimate finish Step 2.

\vspace{2mm}

{\bf Step 3} : {\em We prove the proposition.} \\

\vspace{2mm}

First set
\[
K=\max\{-\A_{\theta(0)}(w_-),\A_{\theta(1)}(w_+)\}
\]
By the definition of $\nu_w(l)$, we get $\|\nabla_s\A_{\theta[l+\nu_w(l)]}[w(l+\nu_w(l))]\|_s<\epsilon$,
which enables us to use Lemma \ref{lem:fund2}.
We obtain the following estimate by using (\ref{eqn:fund2}), (\ref{eqn:dotA}) and (\ref{eqn:E<eta})
\bea\label{eqn:eta1}
|\eta(l+\nu_w(l))|\leq&\widetilde c\left(\left|\A_{\theta[l+\nu_w(l)]}[w(l+\nu_w(l))]\right|+1\right) \\
\leq&\widetilde c\left(K+\intinf\left|\dot\A_{\theta(s)}\right|ds+1\right) \\
\leq&\widetilde c\left(K+2CE(w)+2C+2d_\sigma C+2\|\eta\|_{\infty} d_{F}C+1\right) \\
\leq&\widetilde c\big[K+2C(2\Delta+4C+4d_\sigma C+4\|\eta\|_{\infty} d_{F}C) \\
&+2C+2d_\sigma C+2\|\eta\|_{\infty} d_{F}C+1\big]. \\
\eea
By Step 2 and (\ref{eqn:E<eta}), we get the following inequalities
\bea\label{eqn:eta2}
\left|\int_l^{l+\nu_w(l)}\dot\eta(s)ds\right|\leq&\left|\int_l^{l+\nu_w(l)}\int_0^1F(t,u(t))dt\ ds\right| \\
\leq&\|F\|_{\infty}\nu_w(l) \\
\leq&\|F\|_{\infty}\frac{E(w)}{\epsilon^2} \\
\leq&\frac{\|F\|_{\infty}}{\epsilon^2}(2\Delta+4C+4d_\sigma C+4\|\eta\|_{\infty} d_{F}C). \\
\eea
Combining the above two estimates (\ref{eqn:eta1}) and (\ref{eqn:eta2}), 
we conclude the following
\bean
|\eta(l)|\leq&|\eta(l+\nu_w(l))|+\left|\int_l^{l+\nu_w(l)}\dot\eta(s)ds\right| \\
\leq&\widetilde c\big[K+2C(2\Delta+4C+4d_\sigma C+4\|\eta\|_{\infty} d_{F}C)+2C+2d_\sigma C+2\|\eta\|_{\infty} d_{F}C+1\big] \\
&+\frac{\|F\|_{\infty}}{\epsilon^2}(2\Delta+4C+4d_\sigma C+4\|\eta\|_{\infty} d_{F}C) \\
=&\underbrace{\left(8\widetilde cd_{F}C+2\widetilde cd_{F}+4\frac{d_{F}}{\epsilon^2}\|F\|_{\infty}\right)C}_{=:\kappa_1}\|\eta\|_{\infty} \\
&+\underbrace{\widetilde c\big[K+2C(2\Delta+4C+4d_\sigma C)+2C+2d_\sigma C+1\big]+\frac{\|F\|_{\infty}}{\epsilon^2}(2\Delta+4C+4d_\sigma C)}_{=:\frac{1}{2}\kappa}. \\
\eea
Note that the above estimate is valid for all $l\in\R$ and $\kappa_1\leq\frac{1}{2}$ by the condition (\ref{eqn:Ccond}).
Thus we have 
\bean
\|\eta\|_{\infty}\leq\frac{1}{2}\|\eta\|_{\infty}+\frac{1}{2}\kappa,
\eea
and hance we conclude
\beq\label{eqn:etabdfin}
\|\eta\|_{\infty}\leq\kappa.
\eeq
\end{proof}

\begin{Lemma}\label{lem:actiones}
Let $\epsilon$, $\widetilde c$ in (\ref{eqn:Ccondi2}) be the same as in Lemma \ref{lem:fund2}.
Let $w_-\in\Crit(\A^F_H)$, $w_+\in\Crit(\A_\theta)$ and $w=(u,\eta)$ 
be a gradient flow line of $\A_{\theta(s)}$ with
$\lim_{s\to\pm\infty}w=w_{\pm}$ and its action value
$a=\A_{\theta(0)}(w_-)$, $b=\A_{\theta(1)}(w_+)$.
If the isoperimetric constant $C=C(\theta)$ satisfies the following conditions:
\bea\label{eqn:Ccondi2}
\widetilde c\,d_F\,C&\leq\ \frac{1}{32}; \\
\left(2\widetilde c\,d_F\,C+\frac{d_F\|F\|_\infty}{\epsilon}\right)C&\leq\frac{1}{128}; \\
\bigg(1+d_\sigma +2\,\widetilde c\,d_F+4\,\widetilde c\,d_F\,C(1+d_\sigma +4\,C+4\,d_\sigma C)& \\
+\frac{8\,d_F\|F\|_\infty C}{\epsilon^2}(1+d_\sigma )\bigg)C&\leq\ \,\frac{1}{72},
\eea
then the following assertions hold:
\begin{enumerate}
\item If $a\geq\frac{1}{9}$, then $b\geq\frac{a}{2}$;
\item If $b\leq-\frac{1}{9}$, then $a\leq\frac{b}{2}$.
\end{enumerate}
\end{Lemma}

\begin{proof}
By Proposition \ref{prop:conti}, the Lagrange multiplier $\eta$ 
of the gradient flow line is uniformly bounded as follows
\bea\label{eqn:etainftyesti}
\|\eta\|_\infty\leq2\widetilde c\big(K+2C(2\Delta+4C+4d_\sigma C)+2C+2d_\sigma C+1\big)+\frac{2\|F\|_{\infty}}{\epsilon^2}(2\Delta+4C+4d_\sigma C).
\eea
Recall that $K=\max\{-a,b\}$ and $\Delta=b-a$.
From the fact that $E(w)\geq0$ and (\ref{eqn:E<eta}), we obtain the following inequality
\bea\label{eqn:actionestimate}
b\geq a-2C-2d_\sigma C-2\|\eta\|_\infty d_F C.
\eea 
By combining (\ref{eqn:etainftyesti}) with (\ref{eqn:actionestimate}), we obtain
\bea\label{eqn:aces_ab}
b
\geq& a-2C-2d_\sigma C-2\|\eta\|_\infty d_F C \\
\geq& a-2C-2d_\sigma C-4 \widetilde c\big(K+2C(2\Delta+4C+4d_\sigma C)+2C+2d_\sigma C+1\big)d_F C \\
&+\frac{4\|F\|_{\infty}}{\epsilon^2}(2\Delta+4C+4d_\sigma C) d_F C \\
=& a-4\,\widetilde c\,d_F\,C\,K-8\left(2\widetilde c\,d_F\,C+\frac{d_F\|F\|_\infty}{\epsilon}\right)C\,\Delta \\
&-2\bigg(\frac{8\,d_F\|F\|_\infty C}{\epsilon^2}(1+d_\sigma )
+1+d_\sigma +2\,\widetilde c\,d_F+4\,\widetilde c\,d_F\,C(1+d_\sigma +4\,C+4\,d_\sigma C) \bigg)C \\
\geq&a-\frac{1}{8}K-\frac{1}{16}(b-a)-\frac{1}{36}.
\eea
Here the last inequality we use the condition (\ref{eqn:Ccondi2}).
To prove the assertion (1), we first consider the case
\bean
|b|\leq a, \qquad a\geq\frac{1}{9}.
\eea
In this assumption, we induce the following estimate from (\ref{eqn:aces_ab})
\bean
b\geq a-\frac{1}{8}a-\frac{1}{8}a-\frac{1}{36}=\frac{3}{4}a-\frac{1}{36}\geq\frac{a}{2}.
\eea
Now we want to exclude the case
\bean
-b\geq a \geq\frac{1}{9}.
\eea
But in this case (\ref{eqn:aces_ab}) implies the following contradiction:
\bean
b\geq\frac{1}{9}+\frac{1}{72}-\frac{1}{16}(b-a)-\frac{1}{36}\geq-\frac{1}{16}(b-a)>0.
\eea
This proves the first assumption. To prove the assertion (2), we set
\bean
b'=-a,\qquad a'=-b.
\eea
Then (\ref{eqn:aces_ab}) also holds for $b'$ and $a'$. Thus we get the following assertion from (1)
\bean
-b\geq\frac{1}{9}\Longrightarrow -a\geq-\frac{b}{2}
\eea
which is equivalent to the assertion (2). This proves the lemma.
\end{proof}

\begin{Thm}\label{thm:continuation}
Let $\Sigma\subset T^*N$ be a closed hypersurface with a defining Hamiltonian $F$.
For a generic choice of $(H,\theta)$, the functionals $\A^F_H$, $\A_\theta$ are Morse and
\bean
\FH(\A^F_H)\cong\FH(\A_\theta).
\eea
\end{Thm}

\begin{proof}
For a given magnetic perturbation $\sigma\in\Mf$
and its primitive $\theta\in\mathcal P$,
we first consider the sequence $\{\theta^i\}_{i=1}^N\subset\mathcal P$
which satisfies the following properties:
\begin{enumerate}
\item $\theta^i:=d^i\theta$, where $0=d^0<d^1<\cdots<d^N=1$;
\item $\A_{\theta^i}:\L\times\R\to\R$ is Morse for all $i=0,1,\dots,N$;
\item $C^i:=(d^{i+1}-d^i)\|\theta\|_\infty$ satisfies 
the assumption of Proposition \ref{prop:conti} and Lemma \ref{lem:actiones} for all $i=0,1,\dots,N-1$.\footnote
{Since the Morse property of $\A_{\theta^i}$ is generic, we can guarantee property (2).
Note that we make $C^i$ arbitrarily small by choosing small $d^{i+1}-d^i$.}
\end{enumerate}

Now we are ready to use Proposition \ref{prop:conti} and Lemma \ref{lem:actiones} inductively.
Let us consider following homotopies 
\bean
\theta^i(s)&:=\theta^i+\gamma(s)(\theta^{i+1}-\theta^i);\\
\sigma^i(s)&:=\sigma^i+\gamma(s)(\sigma^{i+1}-\sigma^i).
\eea
Here $\widetilde\sigma^i=d\theta^i$ and see (\ref{eqn:gamma(s)}) for $\gamma(s)$,
and a gradient flow line $w=(u,\eta)\in C^\infty(\R\times (\R/\Z),T^*N)\times C^\infty(\R,\R)$ 
of the time dependent action functional $\A_{\theta^i(s)}$ which satisfies
\bea\label{eqn:gradeqnmi}
\left.
\begin{array}{cc}
\p_su+J(s,t,u)\big(\p_t u-X_H^{\sigma^i(s)}(t,u)-\eta X_F(t,u)\big)=0 \\
\frac{d}{ds}\eta+\int_0^1F(t,u)dt=0,
\end{array}
\right\}
\eea
and the limit condition
\bea\label{eqn:limconmi}
\lim_{s\to-\infty}w(s)=w_-\in\Crit(\A_{\theta^i}),\qquad
\lim_{s\to\infty}w(s)=w_+\in\Crit(\A_{\theta^{i+1}}).
\eea
The solutions of (\ref{eqn:gradeqnmi}), (\ref{eqn:limconmi}) form a moduli space
\bean
\Nc_i^{i+1}(w_-,w_+)=\{w=(u,\eta)\,:
\,(u,\eta)\text{ satisfies }(\ref{eqn:gradeqnmi}),\ (\ref{eqn:limconmi})\}
\eea
which, for a generic homotopy $\sigma^i(s)$, is a smooth manifold of dimension $\mu(w_-)-\mu(w_+)$.

For the compactness of $\Nc_i^{i+1}(w_-,w_+)$ we need to consider the analytic issues
about uniform bounds on $u$, $\eta$, the derivatives of $u$ and the energy of $w$.
The arguments in Theorem \ref{thm:compofmod} are also valid for 
the case of the uniform bounds on $u$ and its derivatives.
Proposition \ref{prop:conti} guarantees the uniform bound on $\eta$. 
Finally (\ref{eqn:E<eta}) with Proposition \ref{prop:conti}
implies the uniform energy bound of time-dependent gradient flow lines as follows,
\bean
E(w)
\leq& 2\A_{\theta^{i+1}(1)}(w_+)-2\A_{\theta^i(0)}(w_-)+4\,C+4d_{\sigma^{i+1}} C+4\|\eta\|_{\infty} d_{F}\,C \\
\leq& 2\A_{\theta^{i+1}(1)}(w_+)-2\A_{\theta^i(0)}(w_-)+4\,C+4d_{\sigma^{i+1}} C+4\,\kappa(w_-,w_+)\,d_{F}\,C.
\eea
We then consider the compactified moduli space $\overline\Nc_i^{i+1}(w_-,w_+)$ 
of $\Nc_i^{i+1}(w_-,w_+)$ with respect to 
Floer-Gromov convergence as in Theorem \ref{thm:welldefAm}.
When $\mu(w_-)-\mu(w_+)=1$ we have
\bea\label{eqn:chaincomm}
\p\overline\Nc_i^{i+1}(w_-,w_+) 
=\bigcup_x\M_{i}(w_-,x)\times\Nc_i^{i+1}(x,w_+)\cup
\bigcup_y\Nc_i^{i+1}(w_-,y)\times\M_{i+1}(y,w_+),
\eea
where $x,y$ run over $\Crit_{\mu(w_+)}(\A_{\theta^i})$, $\Crit_{\mu(w_-)}(\A_{\theta^{i+1}})$ respectively.

Now, by virtue of Lemma \ref{lem:actiones}, we define a map for $a\leq-\frac{1}{9}$ and $b\geq\frac{1}{9}$
\bean
\Psi^{(a,b)}_{i,k}:\FC_k^{(\frac{a}{2},b)}(\A_{\theta^i})\to \FC_k^{(a,\frac{b}{2})}(\A_{\theta^{i+1}})
\eea
given by
\bean
\Psi^{(a,b)}_{i,k}(w_-)=\sum_{\mu(w_+)=\mu(w_-)}\#_2\Nc_i^{i+1}(w_-,w_+)\ w_+.
\eea
Here $\#_2$ means the $\Z_2$-counting.

Now (\ref{eqn:chaincomm}) gives us $\Psi_{i,k-1}^{(a,b)}\circ\p_k=\p_k\circ\Psi_{i,k}^{(a,b)}$
in $\Z_2$ coefficient which implies that $\Psi_{i,k}^{(a,b)}$ is a chain map. 
Hence we have the following homomorphisms on homologies as follows
\bean
\widetilde\Psi^{(a,b)}_{i}:\FH^{(\frac{a}{2},b)}(\A_{\theta^i})\to \FH^{(a,\frac{b}{2})}(\A_{\theta^{i+1}}).
\eea
By taking the inverse and direct limit
\bean
\FH_*(\A_{\theta^i})=\lim_{b\to\infty}\lim_{a\to-\infty}\FH_*^{(a,b)}(\A_{\theta^i}),
\eea
we deduce
\bean
\widetilde\Psi_i:\FH(\A_{\theta^i})\to \FH(\A_{\theta^{i+1}}).
\eea
We define
\bean
\widetilde\Psi^\sigma:\FH(\A^F_H)\to \FH(\A_\theta).
\eea
by
$\widetilde\Psi^\sigma=\widetilde\Psi_{N-1}\circ\cdots\circ\widetilde\Psi_1\circ\widetilde\Psi_0$.
In a similar way, we construct
\bean
\widetilde\Psi_\sigma:\FH(\A_\theta)\to \FH(\A^F_H),
\eea
by following the homotopies in opposite direction.
By a homotopy-of-homotopies argument, we conclude 
$\widetilde\Psi_\sigma\circ\widetilde\Psi^\sigma=\id_{\FH(\A^F_H)}$
and $\widetilde\Psi^\sigma\circ\widetilde\Psi_\sigma=\id_{\FH(\A_\theta)}$. 
Therefore $\widetilde\Psi^\sigma$ is an isomorphism with inverse $\widetilde\Psi_\sigma$.
\end{proof}

By these results, if $\dim\H_*(\L_N)=\infty$ then we have infinitely many critical points of $\A_\theta$.
This implies that there are infinitely many magnetic leaf-wise intersections or
a {\em periodic} one which means that the leaf on which it lies forms a closed Reeb orbit.
We exclude the latter case generically, as follows. 

We recall a hypersurface $\Sigma\subset T^*N$ {\em non-degenerate} 
if closed Reeb orbits on $\Sigma$ form a discrete set.
A generic $\Sigma$ is non-degenerate, see \cite[Theorem B.1]{CF09}.
If $\Sigma$ is non-degenerate, 
then periodic leaf-wise intersection points
can be excluded by choosing a generic Hamiltonian function, 
see \cite[Theorem 3.3]{AF09b}.
With the above generic Hamiltonian,
Albers-Frauenfelder conclude that there are 
infinitely many leaf-wise intersection points on $\Sigma$,
under the topological assumption $\dim\H_*(\L_N)=\infty$.
By these reason, we only consider {\em non}-periodic 
(magnetic) leaf-wise intersection points. 
Thus we conclude the following existence result for
magnetic leaf-wise intersections.

\begin{Cor}\label{cor:infmlwip}
Let $N$ be a closed connected orientable manifold of dimension $n\geq2$.
Let $\Sigma$ be a non-degenerate hypersurface in $T^*N$.
Suppose that $\dim\H_*(\L_N)=\infty$. 
If $\varphi$ 
is generic then there exist infinitely many magnetic leaf-wise intersection points.
\end{Cor}

\begin{proof}[Proof of Corollary \ref{cor:infmlwip}]
In Theorem \ref{thm:continuation}, we have the continuation isomorphism as follows
\bea
\widetilde\Psi^\sigma:\FH_*(\A^F_H)\to\FH_*(\A_\theta).
\eea
Since we assume that $\dim\H_*(\L_N)=\infty$,
(\ref{eqn:RFHloop}), (\ref{eqn:AFHAF}) imply that
$\dim\FH_*(\A_\theta)=\infty$ and consequently
the Morse function $\A_\theta$ has infinitely many critical points. 
Now Proposition \ref{prop:crit} implies that there exist infinitely many 
magnetic leaf-wise intersections or a periodic leaf-wise intersection.
But, by Theorem \ref{thm:infgen}, the latter case can be excluded 
for a generic $(H,\theta)\in\Hc\times\mathcal P$ which generates $\varphi$.
Hence there exist infinitely many magnetic leaf-wise intersections.
\end{proof}

\section{On the growth rate of magnetic leaf-wise intersections}
\label{sec:growthmlwip}
In \cite{MMP}, Macarini-Merry-Paternain prove 
the exponential growth rate of leaf-wise intersections
with respect to the period when $\widetilde\pi_1(N)$ grows exponentially.
Recall that $\widetilde\pi_1(N)$ is the set of conjugacy classes of $\pi_1(N)$.

\subsection{Symplectically hyperbolic manifolds}
In this section, we investigate the examples 
and the candidates for the above topological assumption.
\begin{Def}\label{def:symphyp}
Let $(N,\om_N)$ be a closed symplectic manifold of dimension $2n$.
If the symplectic form $\om_N$ is $\widetilde d$-bounded,
then $(N,\om_N)$ is called {\em symplectically hyperbolic}.
\end{Def}

\begin{Prop}[K\c{e}dra \cite{Ked09}]\label{prop:symphypexp}
Let $(N,\om_N)$ be a symplectically hyperbolic manifold 
then $\pi_1(N)$ grows exponentially.
\end{Prop}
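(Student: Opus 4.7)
The plan is to derive a linear isoperimetric inequality on the universal cover $\widetilde N$ from the $\widetilde d$-bounded hypothesis via Stokes' theorem, convert this inequality into exponential volume growth of $\widetilde N$, and then transfer the conclusion to $\pi_1(N)$ via the Milnor--\v{S}varc lemma. This is essentially Gromov's argument from K\"{a}hler hyperbolic geometry adapted to the symplectic setting.

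First I would fix a basepoint $\widetilde x_0 \in \widetilde N$, lift a Riemannian metric from $N$ to a deck-invariant metric on $\widetilde N$, and write $\widetilde{\om}_N = d\theta$ with $\|\theta\|_\infty < \infty$. Denote by $B(R) \subset \widetilde N$ the closed ball of radius $R$ about $\widetilde x_0$. For almost every $R$ the boundary $\partial B(R)$ is smooth (Sard applied to the distance function), and Stokes' theorem gives
\[\int_{B(R)} \widetilde{\om}_N^n \;=\; \int_{B(R)} d\bigl(\theta \wedge \widetilde{\om}_N^{n-1}\bigr) \;=\; \int_{\partial B(R)} \theta \wedge \widetilde{\om}_N^{n-1}.\]
Because $\om_N$ lives on the closed manifold $N$ its pointwise norm is bounded and $\widetilde{\om}_N^n$ is comparable to the Riemannian volume form (both top forms descend to $N$, whose ratio is a strictly positive function on a compact space), so the left-hand side is bounded below by $c_1 V(R)$ with $V(R) := \vol(B(R))$. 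The integrand on the right is bounded pointwise by $\|\theta\|_\infty \cdot \|\widetilde{\om}_N\|_\infty^{n-1}$, so the right-hand side is bounded above by $c_2 \vol(\partial B(R))$. Combining the two produces the linear isoperimetric inequality
\[V(R) \;\le\; C \cdot \vol(\partial B(R))\]
for a uniform constant $C > 0$ independent of $R$.

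By the coarea formula $V'(R) = \vol(\partial B(R))$ for almost every $R$, so the inequality reads $V(R) \le C V'(R)$, which integrates to $V(R) \ge V(R_0) e^{(R - R_0)/C}$ for all $R \ge R_0$. Thus $\widetilde N$ has at least exponential Riemannian volume growth. Since $\pi_1(N)$ acts properly, freely, and cocompactly by isometries on $\widetilde N$, the Milnor--\v{S}varc lemma yields a quasi-isometry between $\pi_1(N)$ with any word metric and $\widetilde N$. Under this quasi-isometry the growth function of $\pi_1(N)$ is equivalent to $R \mapsto V(R)$, so $\pi_1(N)$ grows exponentially.

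The hard part is the first step: turning the Stokes computation into a genuine linear isoperimetric inequality. One needs the uniform two-sided comparison between $\widetilde{\om}_N^n$ and the Riemannian volume form (which works because both are deck-invariant lifts from the compact base $N$), and a uniform bound on $\theta \wedge \widetilde{\om}_N^{n-1}$ along the boundary (which is precisely what $\widetilde d$-boundedness provides). Once the isoperimetric inequality is in hand, the ODE integration giving exponential volume growth and the invocation of Milnor--\v{S}varc to transfer this growth to $\pi_1(N)$ are standard.
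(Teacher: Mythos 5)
Your argument is correct, but it is a genuinely different route to exponential volume growth of $\widetilde N$ than the one in the paper. You derive a linear isoperimetric inequality from Stokes' theorem,
\[\int_{B(R)}\widetilde\omega_N^n=\int_{\partial B(R)}\theta\wedge\widetilde\omega_N^{n-1}\ \Longrightarrow\ V(R)\le C\,V'(R),\]
and integrate the resulting differential inequality --- the classical Gromov K\"ahler-hyperbolicity argument. The paper instead works dynamically: it takes the Liouville vector field $X$ defined by $\iota_X\widetilde\omega_N=\theta$, notes $L_X(\widetilde\omega_N^n)=n\,\widetilde\omega_N^n$, and pushes a fixed ball $B$ along the flow $\psi_t$ to get $\vol(\psi_T(B))=e^{nT}\vol(B)$; since $\|X\|$ is bounded by $\|\theta\|$, the flowed ball stays inside a metric ball of radius growing linearly in $T$, which again gives exponential growth of $\vol(B(\cdot))$. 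Both methods exploit exactly the same hypothesis --- a uniformly bounded primitive $\theta$ on $\widetilde N$ --- and produce a concrete exponential rate; yours in the form $e^{R/C}$ with $C$ the isoperimetric constant, the paper's in the form $e^{nT}$ via the Liouville expansion rate. A minor technical point you gloss over: the distance function to $\widetilde x_0$ is only Lipschitz, so "Sard applied to the distance function" and Stokes over $\partial B(R)$ need the coarea formula for Lipschitz functions and a.e. rectifiability of level sets; this is standard but worth flagging. The final step (Step 1 of the paper / your Milnor--\v{S}varc invocation) is essentially identical in both proofs.
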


As mentioned above, we are interested in the growth rate of
$\widetilde\pi_1(N)$.
It is known that 
$\widetilde\pi_1(N)$ has exponential growth rate
when $N$ is a 2-dimensional symplectically hyperbolic manifold.
But we don't know the growth rate of $\widetilde\pi_1(N)$ 
for any higher dimensional symplectically hyperbolic manifold.

\subsection{Perturbed $\F$-Rabinowitz action functional}\label{sec:fRabact}
In order to show the exponential growth rate of leaf-wise intersection points,
Macarini-Merry-Paternain used the $\F$-Rabinowitz action functional $\A^f:\L\times\R\to\R$ as follows
\bean
\A^f(u,\eta)=\A^{F,f}_H(u,\eta):=\int_0^1 u^*\lambda-f(\eta)\int_0^1F(t,u)dt-\int_0^1H(t,u)dt.
\eea
The above new ingredient $f\in C^\infty(\R,\R)$ needs to satisfy the following properties:
\begin{enumerate}
\item $f$ is a smooth strictly positive, strictly increasing function.
\item $\lim_{\eta\to-\infty}f(\eta)=0$ and $f'$ satisfies $0<f'(\eta)\leq1$ for all $\eta\in\R$.
\end{enumerate}
The additional data $f(\eta)$ is crucial to the construction of
continuation maps between a concentric family of fiberwise starshaped hypersurfaces, see \cite[Section 4.2]{MMP}.
We denote by $\F$ the set of such $f\in C^\infty(\R,\R)$ satisfying the above conditions.

If we additionally consider the magnetic perturbation, then the action functional becomes
\bean
\A^f_\theta(u,\eta)=\A^{F,f}_{H,\theta}:=\A^f(u,\eta)+\Bc_\theta(u).
\eea
One can check that a critical point of $\A^f_\theta$ satisfies 
\beq
\left.
\begin{array}{cc}
\frac{d}{dt}u=f(\eta)X_F(t,u)+X_H^\sigma(t,u) \\
f'(\eta)\int_0^1F(t,u)dt=0.
\end{array}
\right\}
\eeq
Since $f'(\eta)>0$ for all $\eta\in\R$, it is equivalent to
\beq\label{eqn:fcriteqn}
\left.
\begin{array}{cc}
\frac{d}{dt}u=f(\eta)X_F(t,u)+X_H^\sigma(t,u) \\
\int_0^1F(t,u)dt=0.
\end{array}
\right\}
\eeq
Given $-\infty\leq a\leq b\leq\infty$, we adopt the following notations:
\bean
\Crit(\A^f_\theta)&:=\{w=(u,\eta)\in\L\times\R\,:\,(u,\eta)\text{ satisfies (\ref{eqn:fcriteqn})}\};\\
\Crit^{(a,b)}(\A^f_\theta)&:=\{(u,\eta)\in\Crit(\A^f_\theta)\,:\,\A^f_\theta(u,\eta)\in(a,b)\}.
\eea
A (magnetic) leaf-wise intersection point is called {\em positive} or {\em negative} 
if $\eta$ in (\ref{eqn:lwip}) is positive or negative respectively.
Since $f\in\F$ is a positive function, we only consider {\em positive} (magnetic) leaf-wise intersection 
points. It would be convenient if $f(\eta)=\eta$ on the action window $(a,b)\subset\R^+$ we work with. 

\begin{Def}\label{def:F(a)}
Given $a>0$,
\bean
\F(a):=\{f\in\F\,:\,f(\eta)=\eta,\ \forall\eta\in[a,\infty)\}.
\eea
\end{Def}

For notational convenience, let us denote by
\bean
\LW(a,b)=\LW_{\Sigma,\varphi}(a,b):=\{x\in T^*N\,:\,\phi_\eta^\Sigma(\varphi(x))=x,\text{ for some } \eta\in(a,b)\}
\eea
and recall that
\bean
c(H,\theta)=\sup_{(t,u)\in \R/\Z\times\L}
\left|\int_0^1(\widetilde\lambda+\widetilde\tau^*\theta_t)(\widetilde u(t))[\widetilde X_H^\sigma(t,\widetilde u)]-H(t,u(t))dt\right|.
\eea

\begin{Prop}\label{prop:critmlwip}
Given $a>0$, choose $f\in\F(a)$.
Then there is a map
\bean
&\ev:\Crit^{(a+c,b-c)}(\A^f_\theta)\to\LW(a,b)\\
&\ev(u,\eta)= u(\frac{1}{2}).
\eea
Appendix \ref{sec:noper} guarantees that there is no periodic magnetic leaf-wise intersection point
for generic $\varphi$. 
In this case, $\ev$ is injective and we then obtain the following estimate
\bean
\#\LW(a,b)\geq\#\Crit^{(a+c,b-c)}(\A^f_\theta).
\eea
Here $c=c(H,\theta)$.
\end{Prop}

\begin{proof}
Let $(u,\eta)$ be a critical point of $\A^f_\theta$, by the argument in Proposition \ref{prop:crit},
then $u(\frac{1}{2})$ is a magnetic leaf-wise intersection point and 
its action value becomes
\bea\label{eqn:Afmactionest}
\A^f_\theta(u,\eta)&=\int_0^1(\widetilde\lambda+\widetilde\tau^*\theta_t)
\big(f(\eta)\widetilde X_F(t,\widetilde u)+\widetilde X^\sigma_H(t,\widetilde u)\big)
-\int_0^1 H(t,u)dt \\
&=f(\eta)\underbrace{\int_0^1(\widetilde\lambda+\widetilde\tau^*\theta_t)[\widetilde X_F(t,\widetilde u)]dt}_{=:\diamondsuit}
+\int_0^1(\widetilde\lambda+\widetilde\tau^*\theta_t)[\widetilde X^\sigma_H(t,\widetilde u)]dt
-\int_0^1 H(t,u)dt \\
&=f(\eta)+\int_0^1(\widetilde\lambda+\widetilde\tau^*\theta_t)\big(\widetilde X^\sigma_H(t,\widetilde u)\big)-\int_0^1 H(t,u)dt.
\eea
The third equality in the above equation (\ref{eqn:Afmactionest}) is deduced from the following.
Notice here that $X_F(t,u)=\rho(t)X_{\bar{F}}(u)$ and $\rho(t)$ vanishes on $t\in[\frac{1}{2},1]$ 
while $\theta_t=0$ for $t\in[0,\frac{1}{2}]$.
\bean
\diamondsuit
&=\int_0^1(\widetilde\lambda+\widetilde\tau^*\theta_t)[\rho(t)\widetilde X_{\bar{F}}(\widetilde u)]dt
=\int_0^1\rho(t)\widetilde\lambda(\widetilde X_{\bar{F}}(\widetilde u))dt\\
&=\int_0^1\rho(t)\lambda(X_{\bar{F}}(u))dt
=\int_0^1\rho(t)\lambda(R(u))dt\\
&=\int_0^1\rho(t)dt=1.
\eea 
Thus we obtain
\bea\label{eqn:actionetaes}
|\A^f_\theta(u,\eta)-f(\eta)|\leq c(H,\theta).
\eea
Suppose $\A^f_\theta(u,\eta)\in(a+c(H,\theta),b-c(H,\theta))$ then
\bean
a<f(\eta)<b.
\eea
Since $f\in\F(a)$, we conclude that
\bean
a<\eta<b.
\eea

\end{proof}

For a given almost complex structure $J\in\J_{\om}$, let $\nabla_{J}\A^f_\theta$ be the gradient of $\A^f_\theta$
with respect to the metric $\g_J(\cdot,\cdot)$ in (\ref{eqn:metricgJ}). One can check that
\bean
\nabla_J\A^f_\theta(u,\eta)=
\left(
\begin{array}{cc}
-J(t,u)\left(\frac{d}{dt} u-f(\eta)X_F(t,u)-X_H^\sigma(t,u)\right) \\
-f'(\eta)\int_0^1F(t,u)dt
\end{array}
\right).
\eea

\begin{Def}
{\em A positive gradient flow line} of $\A^f_\theta$ 
with respect to an $\R/\Z$-parametrized almost complex structure $J(t)\in\J_{\om_\sigma}$ is a map
$w:\R\to\L\times\R$ which solves
\bean
\frac{d}{ds}w-\nabla_J\A^f_\theta=0.
\eea
The above map is interpreted as $w=(u,\eta)$ 
where $u:\R\times\\R/\Z\to T^*N\times\R$, $\eta:\R\to\R$ such that
\bea\label{eqn:gradeqnfm}
\left.
\begin{array}{cc}
\p_su+J(t,u)\left(\frac{d}{dt}u-X_H^\sigma(t,u)-f(\eta) X_F(t,u)\right)=0 \\
\frac{d}{ds}\eta+f'(\eta)\int_0^1F(t,u)dt=0.
\end{array}
\right\}
\eea
\end{Def}

\subsection{Floer homology for $\A^f_\theta$}\label{sec:fRabFlo}

Let us first assume that the 
perturbed $\F$-Rabinowitz action functional 
$\A^f_\theta:\L\times\R\to\R$ is Morse in the sense of Corollary \ref{cor:afmorse}.
In order to define the Floer homology for $\A^f_\theta$, 
we need to show that the Lagrange multiplier 
$\eta$ is uniformly bounded.
We follow the same strategy as in the $\A_\theta$-case with minor modifications.

\begin{Lemma}\label{lem:f3}
There exist $\epsilon,c'>0$ such that if $(u,\eta)\in\L\times\R$ satisfies
$\|\nabla_J\A^f_\theta(u,\eta)\|_J\leq\epsilon f'(\eta)$ then 
\bea\label{eqn:f(eta)esAA'}
\frac{2}{3}\left(\A^f_\theta(u,\eta)-c'\|\nabla_J\A^f_\theta(u,\eta)\|_J-c\right)
\leq f(\eta) \leq
2\left(\A^f_\theta(u,\eta)+c'\|\nabla_J\A^f_\theta(u,\eta)\|_J+c\right).
\eea
Here $c=c(H,\theta)$ as in Definition \ref{def:c(m)}.
\end{Lemma}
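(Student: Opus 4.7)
The proof is a direct adaptation of the three-step argument in Lemma~\ref{lem:fund} to the $\F$-Rabinowitz setting. Two new features must be tracked: (i) the $\eta$-component of $\nabla_J\A^f_\m$ carries the extra factor $f'(\eta)$, which is why the hypothesis is scaled by $f'(\eta)$; and (ii) the conclusion requires a \emph{two-sided} bound on $f(\eta)$, not merely a one-sided bound on $|\eta|$.

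For Step~1, I would choose $\delta>0$ so small that $\lambda(X_{F_U})\in[\frac{1}{2}+\delta,\frac{3}{2}-\delta]$ on $U_\delta=\{|F_U-k|<\delta\}$; this is possible by continuity, since $X_F|_\Sigma=R_\Sigma$ and $\alpha(R_\Sigma)=1$. Assuming $u(t)\in U_\delta$ for all $t\in[0,\frac{1}{2}]$, I lift to the universal cover and rearrange
\[
\A^f_\m(u,\eta)=\int_0^1\widetilde\lambda_\m(\widetilde u)[\p_t\widetilde u-f(\eta)\widetilde X_F-\widetilde X_H^\m]dt+f(\eta)\int_0^1(\lambda(X_F)-F)dt+\int_0^1(\widetilde\lambda_\m(\widetilde X_H^\m)-H)dt.
\]
The simplification $\widetilde\lambda_\m(\widetilde X_F)=\lambda(X_F)$ uses that $\beta$ and $\rho$ have disjoint supports. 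Denote the three pieces by $I_1$, $f(\eta)J_1$, $J_2$. Then $|I_1|\leq c'\|\nabla_J\A^f_\m\|_J$ with $c':=\|\widetilde\lambda_\m|_{\widetilde U_\delta}\|_\infty<\infty$ (finiteness as in Lemma~\ref{lem:fund}), $J_1\in[\frac{1}{2},\frac{3}{2}]$ on $U_\delta$, and $|J_2|\leq c$ by Definition~\ref{def:c(m)}. Solving $f(\eta)=(\A^f_\m-I_1-J_2)/J_1$ and using the two-sided bound on $J_1$ together with $f(\eta)>0$ yields both inequalities in~(\ref{eqn:f(eta)esAA'}).

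For Step~2, suppose some $t^\ast\in[0,\frac{1}{2}]$ satisfies $|F_U(u(t^\ast))-k|\geq\delta$; I derive a lower bound on $\|\nabla_J\A^f_\m\|_J$ incompatible with the hypothesis. If $|F_U-k|\geq\delta/2$ throughout $[0,\frac{1}{2}]$, the $\eta$-component gives $\|\nabla_J\A^f_\m\|_J\geq f'(\eta)\cdot|\int_0^1 F\,dt|\geq(\delta/2)\,f'(\eta)$; otherwise, as in Lemma~\ref{lem:fund}, a sub-interval $[a,b]\subset[0,\frac{1}{2}]$ exists on which $|F_U-k|$ traverses $[\delta/2,\delta]$, and the telescoping estimate (using $X_H^\m=0$ on $[0,\frac{1}{2}]$ and $g_t(\nabla F,X_F)=0$) gives the \emph{absolute} bound $\|\nabla_J\A^f_\m\|_J\geq\delta/(2\|\nabla F\|_\infty)$. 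Setting $\epsilon:=\min\{\delta/2,\,\delta/(2\|\nabla F\|_\infty)\}$ forces contradictions in each sub-case once we invoke $f'(\eta)\leq 1$, so Step~1 must apply.

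The main obstacle is the second sub-case of Step~2: its lower bound on $\|\nabla_J\A^f_\m\|_J$ is independent of $f'(\eta)$, whereas the hypothesis scales with $f'(\eta)$, so a priori there is no contradiction. The key point is the defining condition $f'\leq 1$ for $\F$, which absolutizes the hypothesis to $\|\nabla_J\A^f_\m\|_J\leq\epsilon$; choosing $\epsilon$ strictly below $\delta/(2\|\nabla F\|_\infty)$ then closes the argument. A second, milder complication is that Lemma~\ref{lem:fund} only used the lower half $\lambda(X_F)-F\geq\frac{1}{2}$; here the upper bound $\leq\frac{3}{2}$ is also required (it produces the coefficient $2/3$ in the lower bound on $f(\eta)$), and this is arranged by shrinking $\delta$ further.
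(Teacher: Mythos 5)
Your proposal is correct and follows essentially the same two-step argument as the paper: Step 1 establishes the two-sided bound on $f(\eta)$ under the assumption $u([0,\frac{1}{2}])\subset U_\delta$ by decomposing $\A^f_\m$ into a gradient-controlled term, a term proportional to $f(\eta)$ with coefficient pinned in $[\frac{1}{2},\frac{3}{2}]$, and a term bounded by $c(H,\m)$; Step 2 reduces to the first case via the two sub-cases ($\eta$-component estimate vs.\ the telescoping estimate from Lemma~\ref{lem:fund}), with the observation that $f'\leq 1$ converts the absolute lower bound $\delta/(2\|\nabla F\|_\infty)$ into a contradiction with the $f'(\eta)$-scaled hypothesis. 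Your side remark that $\widetilde\lambda_\m(\widetilde X_F)=\lambda(X_F)$ because $\beta$ and $\rho$ have disjoint supports is a detail the paper uses implicitly, and your attention to the strictness of the inequality at the threshold $\epsilon=\delta/(2\|\nabla F\|_\infty)$ is slightly more careful than the paper's choice $\epsilon=\min\{\delta/2,\delta/(2\|\nabla F\|_\infty)\}$, but neither point changes the substance.
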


\begin{proof}
The proof consists of 2 steps.

\vspace{2mm}

{\bf Step 1} : {\em There exist constants $\delta,c'>0$ such that if $u\in\L$ satisfies
\bean
u(t)\in U_\delta:=F^{-1}(-\delta,\delta), \quad\forall t\in[0,\frac{1}{2}]
\eea
then (\ref{eqn:f(eta)esAA'}) holds.}

\vspace{2mm}

There exist $\delta>0$ such that
\bean
\frac{1}{2}+\delta\leq\lambda(X_F(p))\leq\frac{3}{2}-\delta,\quad\forall\, p\in U_\delta.
\eea
Now we compute
\bean
\A_\theta^f(u,\eta)
=&\int_0^1u^*\lambda-\int_0^1H(t,u(t))dt-f(\eta)\int_0^1F(t,u(t))dt+\Bc_\theta(u(t)) \\
=&\int_0^1\widetilde u^*(\widetilde\lambda+\widetilde\tau^*\theta_t)-\int_0^1H(t,u(t))dt-f(\eta)\int_0^1F(t,u(t))dt \\
=&\int_0^1(\widetilde\lambda+\widetilde\tau^*\theta_t)(\widetilde u(t))[\frac{d}{dt}\widetilde u-f(\eta)\widetilde X_F(t,\widetilde u)-\widetilde X_H^\sigma(t,\widetilde u)]dt \\
&+f(\eta)\int_0^1\underbrace{\lambda(u(t))[X_F(t,u)]}_{\geq\frac{1}{2}+\delta}-\underbrace{F(t,u(t))}_{\leq\delta}dt \\
&+\int_0^1(\widetilde\lambda+\widetilde\tau^*\theta_t)(\widetilde u(t))[\widetilde X_H^\sigma(t,\widetilde u)]-H(t,u(t))dt \\
\geq&\left(\frac{1}{2}+\delta-\delta\right)f(\eta)-c' \|\frac{d}{dt}u-X_H^{\sigma(s)}(t,u)-f(\eta) X_F(t,u)\|_1-c(H,\theta) \\
\geq&\frac{1}{2}|f(\eta)|-c' \|\frac{d}{dt}u-X_H^{\sigma(s)}(t,u)-f(\eta) X_F(t,u)\|_2-c(H,\theta) \\
\geq&\frac{1}{2}|f(\eta)|-c' \|\nabla_J\A_\theta(u,f(\eta))\|_J-c(H,\theta), \\
\eea
where $c'=c'(\theta,\delta):=\|(\widetilde\lambda+\widetilde\tau^*\theta)|_{\widetilde U_\delta}\|_\infty$. 
In a similar way, we get the following estimate
\bean
\A_\theta^f(u,\eta)
=&\int_0^1u^*\lambda-\int_0^1H(t,u(t))dt-f(\eta)\int_0^1F(t,u(t))dt+\Bc_\theta(u(t)) \\
=&\int_0^1\widetilde u^*(\widetilde\lambda+\widetilde\tau^*\theta_t)-\int_0^1H(t,u(t))dt-f(\eta)\int_0^1F(t,u(t))dt \\
=&\int_0^1(\widetilde\lambda+\widetilde\tau^*\theta_t)(\widetilde u(t))[\frac{d}{dt}\widetilde u-f(\eta)\widetilde X_F(t,\widetilde u)-\widetilde X_H^\sigma(t,\widetilde u)]dt \\
&+f(\eta)\int_0^1\underbrace{\lambda(u(t))[X_F(t,u)]}_{\leq\frac{3}{2}-\delta}-\underbrace{F(t,u(t))}_{\geq-\delta}dt \\
&+\int_0^1(\widetilde\lambda+\widetilde\tau^*\theta_t)(\widetilde u(t))[\widetilde X_H^\sigma(t,\widetilde u)]-H(t,u(t))dt \\
\leq&\left(\frac{3}{2}-\delta+\delta\right)f(\eta)+c'\|\frac{d}{dt}u-X_H^{\sigma(s)}(t,u)-f(\eta) X_F(t,u)\|_1+c(H,\theta) \\
\leq&\frac{3}{2}|f(\eta)|+c'\|\frac{d}{dt}u-X_H^{\sigma(s)}(t,u)-f(\eta) X_F(t,u)\|_2+c(H,\theta) \\
\leq&\frac{3}{2}|f(\eta)|+c'\|\nabla_J\A_\theta(u,f(\eta))\|_J+c(H,\theta). \\
\eea
The above two estimates prove Step 1.

\vspace{2mm}

{\bf Step 2} : {\em For any $\delta>0$ there exist $\epsilon>0$ such that if $(u,\eta)\in\L\times\R$
\bean
\|\nabla_J\A^f_\theta(u,\eta)\|_J\leq\epsilon f'(\eta)
\eea
then $u(t)\in U_\delta$ for all $t\in[0,\frac{1}{2}]$.}

\vspace{2mm}

By a similar argument as in Lemma \ref{lem:fund} Step 2, 
if $F(u(t))\geq\frac{\delta}{2}$ for all $t\in[0,\frac{1}{2}]$ then
\bean
\|\nabla_J\A^f_\theta(u,\eta)\|_J\geq\left|f'(\eta)\int_0^1F(t,u(t))dt\right|\geq f'(\eta)\frac{\delta}{2}.
\eea
Now, if there exist $t_1,t_2$ in $[0,\frac{1}{2}]$ with $F(u(t_1))\leq\frac{\delta}{2}$ and $F(u(t_2))\geq\delta$ then
\bean
\|\nabla_J\A^f_\theta(u,\eta)\|_J\geq\frac{\delta}{2\|\nabla F\|_\infty}.
\eea
If we set
\bean
\epsilon=\epsilon(\delta,F):=\min\left\{\frac{\delta}{2},\frac{\delta}{2\|\nabla F\|_\infty}\right\}
\eea
and use the fact that $f'(\eta)\leq1$ for all $\eta\in\R$ then this proves Step 2.

By combining Step 1 and Step 2, we immediately prove the lemma.
\end{proof}

We need further preliminaries.
Now we consider a certain class of $f\in\F(a)$ with the following condition.

\begin{Def}\label{def:F(a,r)}
Given $a,r>0$,
\bea\label{eqn:fcondi2}
\F(a,r)&:=\{f\in\F(a)\,:\,\exists A>0\text{ such that }Af'(-A)>r\};\\
\widetilde\F(a)&:=\bigcap_{r>0}\F(a,r).
\eea
\end{Def}

\begin{Rmk}
Given $a>0$, the set $\bigcap_{r>0}\F(a,r)$ is non-empty and path-connected.
An explicit construction of $f\in\bigcap_{r>0}\F(a,r)$ exists.
There also exists a homotopy between two different $f_0,f_1\in\F(a,r)$.
All these things are explained in \cite[Remark 3.24, Lemma 3.25]{MMP}.
\end{Rmk}

\begin{Prop}\label{prop:f_etaes}
Fix $F\in\D(\Sigma)$ and an action window $(a,b)$ such that $0<a<b<\infty$.
Let $c',\epsilon>0$ be the constants from Lemma \ref{lem:f3}.
Choose $f\in\F(\frac{a}{6},\frac{b-a}{\min\{\epsilon,a/4c'\}})$ 
and a generic pair $(H,\theta)$ such that $c(H,\theta)\leq\frac{a}{2}$.
Let $w_\pm\in\Crit^{(a,b)}(\A^f_\theta)$ and 
$w=(u,\eta)$ be a gradient flow line of $\A^f_\theta$ with $\lim_{s\to\pm\infty}w(s)=w_\pm$.
Then there exists a constant $\kappa=\kappa(a,b)$ satisfying $\|\eta\|_\infty\leq\kappa$.
\end{Prop}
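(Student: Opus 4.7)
The plan is to adapt the strategy of Proposition~\ref{prop:etaesusu} to the $\F$-weighted setting, with Lemma~\ref{lem:f3} playing the role of Lemma~\ref{lem:fund}. The new subtlety is that the factor $f'(\eta)$ enters both in the hypothesis of Lemma~\ref{lem:f3} and in the $\eta$-component of the flow equation \eqref{eqn:gradeqnfm}; moreover, differentiating along the flow gives
\[
\frac{d}{ds} f(\eta(s)) = -f'(\eta(s))^2 \int_0^1 F(t,u(s,t))\,dt,
\]
so the natural quantity to control first is $f(\eta)$, and only afterwards $\eta$ itself.

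First I would observe that $\A^f_\m$ is monotone along non-constant gradient flow lines, so $\A^f_\m(w(s)) \in [a,b]$ for every $s$ and $E(w) = \A^f_\m(w_+) - \A^f_\m(w_-) \leq b-a$. For each $l \in \R$ I set
\[
\nu_w(l) := \inf\bigl\{\nu \geq 0 : \|\nabla_J \A^f_\m(w(l+\nu))\|_J \leq \epsilon\, f'(\eta(l+\nu))\bigr\},
\]
where $\epsilon, c'$ come from Lemma~\ref{lem:f3}, and write $s^* := l + \nu_w(l)$. Applying Lemma~\ref{lem:f3} at $s^*$, together with $|\A^f_\m(w(s^*))| \leq b$, the hypothesis $c(H,\m) \leq a/2$, and the numerical comparison between $c'\epsilon$ and $a$ built into the choice of $f \in \F(a/6,\cdot)$, produces a two-sided bound of the form $0 < c_1(a,b) \leq f(\eta(s^*)) \leq c_2(a,b)$. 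Because $f \in \F(a/6)$ is strictly increasing and coincides with the identity on $[a/6,\infty)$, this translates into a two-sided bound on $\eta(s^*)$ depending only on $a$, $b$ and $f$.

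To propagate this bound back to $\eta(l)$, I would integrate the displayed identity and use the definition of $\nu_w(l)$:
\[
\epsilon^2 \int_l^{s^*} f'(\eta(s))^2 \, ds \;\leq\; \int_l^{s^*}\|\nabla_J \A^f_\m(w)\|_J^2\, ds \;\leq\; E(w) \;\leq\; b-a,
\]
so $\int_l^{s^*} f'(\eta)^2\, ds \leq (b-a)/\epsilon^2$, and hence $|f(\eta(l)) - f(\eta(s^*))| \leq \|F\|_\infty (b-a)/\epsilon^2$. Since $f$ is strictly increasing, this already yields a uniform upper bound on $\eta(l)$.

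The hard part, and the reason for the auxiliary hypothesis $f \in \F(a/6, r)$ with $r = (b-a)/\min\{\epsilon, a/(4c')\}$, is producing a uniform \emph{lower} bound on $\eta(l)$. The bound on $f(\eta(l))$ from below is not enough on its own because $f^{-1}$ may fail to be Lipschitz near $0$. The quantitative condition $A\,f'(-A) > r$ is calibrated precisely so that any excursion with $\eta(l) \leq -A$ forces $\int_l^{s^*} f'(\eta)\, ds$, and hence $|\eta(l) - \eta(s^*)|$, to exceed the budget supplied by the energy estimate above (via Cauchy--Schwarz together with a bound on $\nu_w(l)$), contradicting the two-sided control on $\eta(s^*)$. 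A careful quantitative comparison, parallel to the analogous step in \cite{MMP}, then produces the uniform lower bound on $\eta(l)$ and yields the desired $\kappa = \kappa(a,b)$.
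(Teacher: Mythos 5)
Your proposal captures the correct overall structure — stopping time defined via $\|\nabla_J\A^f_\m\|_J\leq \epsilon\,f'(\eta)$, application of Lemma~\ref{lem:f3} at the stopped time $s^*$, and propagation of the bound on $f(\eta(s^*))$ back to $f(\eta(l))$ — and your lower-bound strategy, using the calibration $Af'(-A)>r$ to rule out deep negative excursions, is the same contradiction argument the paper uses.

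Where you genuinely depart from the paper is in the upper-bound propagation, and your route is arguably cleaner. You exploit the exact flow identity
\[
\frac{d}{ds}f(\eta(s))=-f'(\eta(s))^2\int_0^1 F(t,u(s,t))\,dt,
\]
so that $|f(\eta(l))-f(\eta(s^*))|\leq\|F\|_\infty\int_l^{s^*}f'(\eta)^2\,ds\leq \|F\|_\infty(b-a)/\epsilon^2$ follows without ever needing a pointwise bound on $\nu_w(l)$, nor the auxiliary quantity $i_w(l):=\inf_{[l,s^*]}f'(\eta)$. The paper, by contrast, establishes the lower bound $\eta\geq -A$ first, then introduces a second stopping time $\widetilde\nu_w$ with threshold $\epsilon_1 f'(-A)$ precisely so that $\widetilde\nu_w$ and $|\eta(l)-\eta(l+\widetilde\nu_w(l))|$ are controllable; its upper-bound argument therefore depends logically on the lower bound. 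Your upper bound is self-contained, which is a real structural simplification.

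That said, there are two gaps. First, a small one: the stopping threshold should be $\epsilon_1:=\min\{\epsilon,\,a/(4c')\}$ (as in the paper), not $\epsilon$ alone; otherwise the Lemma~\ref{lem:f3} error term $c'\|\nabla_J\A^f_\m\|_J\leq c'\epsilon_1 f'(\eta)\leq a/4$ that you need to pin $f(\eta(s^*))\geq a/6$ is not guaranteed. Second, and more seriously, the lower bound is the hard half and you have essentially deferred it. Your sketch appeals to "Cauchy--Schwarz together with a bound on $\nu_w(l)$", but you never produce that bound on $\nu_w(l)$ — and it is exactly here that the paper must introduce $i_w(l)$ and derive $\nu_w(l)\leq (b-a)/(\epsilon_1^2\,i_w(l)^2)$, $|\eta(l)-\eta(s^*)|\leq (b-a)/(\epsilon_1\,i_w(l))$, and then combine these with $\eta(s^*)\geq a/6$ to reach the inequality $f'(\eta(l))\eta(l)\geq -(b-a)/\epsilon_1$ that contradicts $Af'(-A)>r$ at the level $\eta=-A$. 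This chain is the delicate part of the whole proof (the constants must be tracked so that the threshold $A$ does not itself depend on $l$), and "a careful quantitative comparison, parallel to the analogous step in [MMP]" leaves it unverified. Before the argument can be called complete, you need to write out that chain, or else find a way to make your cleaner $\int f'(\eta)^2\,ds$ estimate yield a lower bound too — which it cannot by itself, since a lower bound on $f(\eta(l))$ that dips to or below $0$ says nothing about $\eta(l)$.
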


\begin{proof}
For convenience, set
\bean
\epsilon_1:=\min\left\{\epsilon,\frac{a}{4c'}\right\}.
\eea 
First define a function $\nu_w:\R\to[0,\infty)$ for a given gradient flow line $w=(u,\eta)$ by
\bean
\nu_w(l):=\inf\{\nu\geq0\,:\,\|\nabla_J\A^f_\theta(w(l+\nu))\|_J\leq\epsilon_1 f'(\eta(l+\nu))\}.
\eea
Since $\lim_{s\to\infty}f'(\eta(s))=1$ and $\lim_{s\to\infty}\|\nabla_J\A^f_\theta((u,\eta)(s))\|_J=0$, $\nu_w$ is well-defined.
We get the following estimate
\bea\label{eqn:nuesb-a}
b-a
\geq&\lim_{s\to\infty}\A^f_\theta(w(s))-\lim_{s\to-\infty}\A^f_\theta(w(s)) \\
=&\intinf\|\nabla_J\A^f_\theta(w(s))\|^2_Jds \\
\geq&\int_l^{l+\nu_w(l)}\epsilon_1^2 f'(\eta(s))^2ds \\
\geq&\nu_w(l)\epsilon_1^2 i_w(l)^2,
\eea
where $i_w(l):=\inf_{l\leq s\leq l+\nu_w(s)}f'(\eta(s))$.
Hence we obtain
\bean
\nu_w(l)\leq\frac{b-a}{\epsilon_1^2 i_w(l)^2}.
\eea
Now observe that
\bea\label{eqn:dotetaesb-a}
\left|\int_l^{l+\nu_w(l)}\dot\eta(s)ds\right|
\leq&\int_l^{l+\nu_w(l)}\left|\dot\eta(s)\right|ds \\
\leq&\left(\nu_w(l)\int_l^{l+\nu_w(l)}\left|\dot\eta(s)\right|^2ds\right)^{1/2} \\
\leq&\left(\nu_w(l)\int_l^{l+\nu_w(l)}\|\nabla_J\A^f_\theta(w(s))\|_J^2ds\right)^{1/2} \\
\leq&(\nu_w(l)\ E(w))^{1/2} \\
\leq&\frac{b-a}{\epsilon_1\ i_w(l)}.
\eea
By Lemma \ref{lem:f3}, we get the following estimate for any $l\in\R$
\bean
f[\eta(l+\nu_w(l))]
\geq&\frac{2}{3}\bigg(\A^f_\theta[w(l+\nu_w(l))]-c'\|\nabla_J\A^f_\theta(u,\eta)\|_J-\underbrace{c(H,\theta)}_{\leq\frac{a}{2}}\bigg) \\
\geq&\frac{2}{3}(a-\underbrace{c'\epsilon_1}_{\leq\frac{a}{4}} \underbrace{f'[\eta(l+\nu_w(l))]}_{\leq 1}-\frac{a}{2}) \\
\geq&\frac{a}{6}.
\eea
Since $f\in\F(\frac{a}{6})$, we get
\bean
\eta(l+\nu_w(l))\geq\frac{a}{6},
\eea
and hence
\bean
\eta(l)\geq&\eta(l+\nu_w(l))-\left|\int_l^{l+\nu_w(l)}\dot\eta(s)ds\right| \\
\geq&\frac{a}{6}-\frac{b-a}{\epsilon_1 i_w(l)} \\
>&-\frac{b-a}{\epsilon_1 i_w(l)}.
\eea
This implies
\bean
f'(\eta(l))\eta(l)\geq i_w(l)\eta(l)\geq-\frac{b-a}{\epsilon_1}.
\eea
Now suppose that there exists $l_0\in\R$ such that $\eta(l_0)<-A$
then there must be $l_1\in\R$ with $\eta(l_1)=-A$.
This induces the following contradiction by the choice of $f\in\F(\frac{a}{6},\frac{b-a}{\epsilon_1})$ 
with (\ref{eqn:fcondi2}),
\bean
-\frac{b-a}{\epsilon_1}>-f'(-A)A=f'(\eta(l_1))\eta(l_1)>-\frac{b-a}{\epsilon_1}.
\eea
So, we conclude that $\eta(l)>-A$ for all $l\in\R$.

Now consider the upper bound. Start with a new function $\widetilde\nu_w:\R\to[0,\infty)$ by
\bean
\widetilde\nu_w(l):=\inf\{\nu\geq0:\|\nabla_J\A^f_\theta(w(l+\nu))\|_J\leq\epsilon_1 f'(-A)\}.
\eea
By a similar argument as in (\ref{eqn:nuesb-a}) and (\ref{eqn:dotetaesb-a}), we see that
\bean 
\widetilde\nu_w(l)\leq\frac{b-a}{\epsilon_1^2\ f'(-A)^2}
\eea
and
\bea\label{eqn:etaesA}
|\eta(l)-\eta(l+\widetilde\nu_w(l))|<\frac{b-a}{\epsilon_1\; f'(-A)}<A
\eea
where the last inequality comes from (\ref{eqn:fcondi2}) again.
By Lemma \ref{lem:f3}, we get
\bean
f[\eta(l+\widetilde\nu_w(l))]
&\leq 2\left(\A^f_\theta[w(l+\widetilde\nu_w(l))]+c'\|\nabla_J\A^f_\theta[w(l+\widetilde\nu_w(l))]\|_J+c(H,\theta)\right) \\
&\leq 2(b+\underbrace{c'\epsilon_1}_{\leq\frac{a}{4}} \underbrace{f'(-A)}_{\leq 1}+\frac{a}{2}) \\
&< 2a+2b.
\eea
This implies that $\eta(l+\widetilde\nu_w(l))<2a+2b$ and by (\ref{eqn:etaesA})
\bean
\eta(l)<2a+2b+A.
\eea
Thus we conclude that
\bean
\|\eta\|_\infty<\kappa:=2a+2b+A.
\eea
\end{proof}

For simplicity, let us denote by 
\bean
A(\A^f_\theta):=\{\A^f_\theta(w)\,:\,w\in\Crit(\A^f_\theta)\}.
\eea

\begin{Thm}
Fix $F\in\D(\Sigma)$ and $f\in\widetilde\F(\frac{1}{6})$, see Definition \ref{def:F(a,r)}.
Choose a generic pair $(H,\theta)$. 
If $\max\{1,2\,c(H,\theta)\}<a<b\leq\infty$ and $a,b\notin A(\A^f_\theta)$, 
then $\FH^{(a,b)}(\A^f_\theta)$ is well-defined.
\end{Thm}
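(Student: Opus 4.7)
The strategy is to replicate the construction of $\FH(\A_\m)$ from Section \ref{sec:Flo(A_m)}, filtering now by action and feeding in the new $\eta$-bound established in Proposition \ref{prop:f_etaes}. The hypotheses on $a$, $b$ and $f$ are arranged precisely so that this proposition applies: since $a > 1$ one has $\F(1/6, r) \subset \F(a/6, r)$ for every $r > 0$, so $f \in \F(a/6, (b-a)/\min\{\epsilon, a/4c'\})$; and since $a > 2c(H,\m)$ one has $c(H,\m) \leq a/2$.

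First I would verify that $\Crit^{(a,b)}(\A^f_\m)$ is finite when $b < \infty$. At a critical point $w = (u, \eta)$ the gradient vanishes, so Lemma \ref{lem:f3} bounds $f(\eta)$ in terms of $\A^f_\m(w) \in (a, b)$ and $c(H, \m)$; since $f$ is strictly positive and strictly increasing with $f(\eta) = \eta$ on $[1/6, \infty)$, this bounds $\eta$. Combined with the $L^\infty$-bound on $u$ coming from convexity at infinity of $(T^*N, \om_\m)$, the Morse property of $\A^f_\m$ guarantees that $\Crit^{(a,b)}(\A^f_\m)$ is a finite set. I would then grade by the Conley--Zehnder index $\mu$ and define $\FC_k^{(a,b)}(\A^f_\m) := \Crit_k^{(a,b)}(\A^f_\m) \otimes \Z_2$.

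The principal technical step is compactness of the moduli spaces $\widehat\M(w_-, w_+)$ of gradient flow lines (\ref{eqn:gradeqnfm}) connecting critical points $w_\pm \in \Crit^{(a,b)}(\A^f_\m)$, for a generic $J(t) \in \J_{\om_\m}$. Since action increases monotonically along positive gradient flow lines, any such flow line stays inside the window $(a, b)$, so its energy is automatically bounded by $b - a$. The $L^\infty$-bound on $\eta$ comes directly from Proposition \ref{prop:f_etaes}, the $L^\infty$-bound on $u$ from convexity at infinity, and bubbling is excluded by Proposition \ref{prop:asphe}. Together these yield $C^\infty_{\mathrm{loc}}$-compactness up to broken trajectories, exactly paralleling Theorem \ref{thm:compofmod}. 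The boundary operator $\p$ is then defined by $\Z_2$-counting the zero-dimensional components of $\M(w_-, w_+) = \widehat\M(w_-, w_+)/\R$, and $\p \circ \p = 0$ follows from the standard analysis of one-dimensional moduli spaces. One sets $\FH_*^{(a,b)}(\A^f_\m) := \H_*(\FC_\bullet^{(a,b)}(\A^f_\m), \p)$; for $b = \infty$, take the direct limit $\varinjlim_{b' \to \infty} \FH^{(a, b')}(\A^f_\m)$ along the natural maps induced by inclusions of filtered subcomplexes.

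The hard part is the $\eta$-bound, and this has already been done in Proposition \ref{prop:f_etaes}; what remains is to check that the hypotheses of that proposition are met (which is exactly what the numerical assumptions $a > \max\{1, 2c(H,\m)\}$ and $f \in \bigcap_{r>0}\F(1/6,r)$ secure) and then to run the standard Floer-theoretic machinery. Transversality for a generic choice of $J(t)$ is routine, and the Morse property of $\A^f_\m$ is absorbed into the assumption that $(H, \m)$ is generic.
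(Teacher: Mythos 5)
Your proposal matches the paper's own construction: compactness of the moduli spaces comes from the $\eta$-bound of Proposition \ref{prop:f_etaes} together with Theorem \ref{thm:compofmod} and asphericity, after which the chain complex, grading by $\CZ$, boundary operator, and $\p^2=0$ follow by the standard Floer machinery, exactly as in the $\A_\m$-case. You supply somewhat more detail than the paper (explicitly verifying that $a>\max\{1,2c(H,\m)\}$ and $f\in\bigcap_{r>0}\F(1/6,r)$ place $f$ in $\F(a/6,(b-a)/\min\{\epsilon,a/4c'\})$ with $c(H,\m)\leq a/2$, and handling $b=\infty$ by a direct limit), but the underlying route is the same.
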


\begin{proof}
The construction of $\FH^{(a,b)}(\A^f_\theta)$ is the same as in the $\A_\theta$-case.
For $w=(u,\eta)\in\Crit^{(a,b)}(\A^f_\theta)$,
we define the index $\mu(w):=\CZ(u)$.
Let us denote by
\bean
\Crit_k^{(a,b)}(\A^f_\theta)&:=\{w\in\Crit^{(a,b)}(\A^f_m)\,:\,\mu(w)=k\};\\
\FC_k^{(a,b)}(\A^f_\theta)&:=\Crit_k^{(a,b)}(\A^f_\theta)\otimes\Z_2.
\eea
For a generic almost complex structure $J(t)\in\J_\sigma$ 
and given $w_\pm\in\Crit^{(a,b)}(\A^f_\theta)$, we define
\bean
\widehat\M(w_-,w_+)&:=\{w(s)\,:\,w\text{ satisfies }(\ref{eqn:gradeqnfm}),\ \lim_{s\to\pm\infty}w(s)=w_\pm\};\\
\M(w_-,w_+)&:=\widehat\M(w_-,w_+)/\R.
\eea
The above $\R$-action is given by translating the $s$-coordinate.
Suppose further that the almost complex structure $J(t)$ is generic,
so that $\M(w_-,w_+)$ is a smooth manifold of dimension
\bean
\dim\M(w_-,w_+)=\mu(w_-)-\mu(w_+)-1.
\eea
The boundary operator $\p_k:\FC_k^{(a,b)}(\A^f_\theta)\to\FC^{(a,b)}_{k-1}(\A^f_\theta)$
is defined by
\bean
\p_k w_-:=\sum_{\mu(w_+)=k-1}\#_2\M(w_-,w_+)w_+,
\eea
where $\#_2$ means $\Z_2$-counting. 
By virtue of Proposition \ref{prop:f_etaes} with Theorem \ref{thm:compofmod}, 
the boundary operator satisfies $\p_{k-1}\circ\p_k=0$.
Then the resulting filtered Floer homology group is
\bean
\FH^{(a,b)}_*(\A^f_\theta)=\H_*(\FC^{(a,b)}_\bullet(\A^f_\theta),\p_*).
\eea
\end{proof}

\subsection{Continuation map between $\FH(\A^f)$ and $\FH(\A^f_\theta)$. }

In this section we construct a continuation homomorphism between 
$\FC(\A^f)$ and $\FC(\A^f_\theta)$
which induces a map on homologies on a suitable action window.
The construction is given by 
counting gradient flow lines of the $s$-dependent action functional
\bean
\A_{\theta(s)}^f(u,\eta):=\A^f(u,\eta)+\gamma(s)\Bc_{\theta}(u).
\eea
Here $\A^f(u,\eta)=\int_0^1u^*\lambda-f(\eta)\int_0^1F(t,u(t))dt-\int_0^1H(t,u(t))dt$ 
and $\sigma(s)$ is defined in (\ref{eqn:m(s)def}).
With the same metric as in (\ref{eqn:m(s)metric}),
the gradient flow line $w=(u,\eta)\in C^\infty(\R\times (\R/\Z),T^*N)\times C^\infty(\R,\R)$ satisfies
\bea\label{eqn:fgradeqn}
\left.
\begin{array}{cc}
\p_su+J(s,t,u)\big(\frac{d}{dt}u-f(\eta) X_F(t,u)-X_H^{\sigma(s)}(t,u)\big)=0 \\
\frac{d}{ds}\eta+f\,'(\eta)\int_0^1F(t,u)dt=0.
\end{array}
\right\}
\eea

In order to construct a continuation map,
we need to check that the energy $\intinf\|\frac{d}{ds}w\|_s^2ds$ 
and the Lagrange multiplier $\eta$ of gradient flow lines $w$
are uniformly bounded.
As in the $\A_\theta$ case, we start with the fundamental lemma.

\begin{Lemma}\label{lem:m(s)f}
There exist $\overline\epsilon,\overline c\,'>0$ such that if $(u,\eta)\in\L\times\R$ satisfies
\bean
\|\nabla_s\A^f_{\theta(s)}(u,\eta)\|_s\leq\overline\epsilon f'(\eta)
\eea
then 
\bean
\frac{2}{3}\left(\A^f_{\theta(s)}(u,\eta)-\overline c\,'\|\nabla_s\A^f_{\theta(s)}(u,\eta)\|_s-\overline c\right)
\leq f(\eta) \leq
2\left(\A^f_{\theta(s)}(u,\eta)+\overline c\,'\|\nabla_s\A^f_{\theta(s)}(u,\eta)\|_s+\overline c\right). 
\eea
Here
\bean
\overline c=\overline c(H,\theta)
:=\sup_{s\in\R}\sup_{(t,u)\in \R/\Z\times\L}
\left|\int_0^1(\widetilde\lambda+\gamma(s)\widetilde\tau^*\theta_t)(\widetilde u(t))[\widetilde X_H^{\sigma(s)}(t,\widetilde u)]-H(t,u(t))dt\right|.
\eea
\end{Lemma}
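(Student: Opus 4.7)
The plan is to imitate the two-step proof of Lemma \ref{lem:f3} with the $s$-dependent perturbation data $\m(s) = (\gamma(s)\beta, \sigma, \theta)$ in place of $\m$, verifying at each stage that the constants appearing can be chosen uniformly in $s \in \R$. Since the $s$-dependence enters only through the scalar $\gamma(s) \in [0,1]$ while $(\beta, \sigma, \theta)$ is fixed, taking a supremum over $s$ in any $L^\infty$-type quantity of the argument still yields a finite bound.

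\textbf{Step 1.} I first choose $\delta > 0$ small enough that $\frac{1}{2} + \delta \leq \lambda(X_F(p)) \leq \frac{3}{2} - \delta$ on $U_\delta := F^{-1}(-\delta, \delta)$. For any $u \in \L$ with $u(t) \in U_\delta$ for all $t \in [0, \frac{1}{2}]$, I split
\[ \A^f_{\m(s)}(u, \eta) = \int_0^1 \widetilde\lambda_{\m(s)}(\widetilde u)[\partial_t \widetilde u - f(\eta)\widetilde X_F - \widetilde X_H^{\m(s)}]\,dt + f(\eta)\int_0^1 \lambda(X_F) - F\,dt + \int_0^1 \widetilde\lambda_{\m(s)}[\widetilde X_H^{\m(s)}] - H\,dt, \]
where $\widetilde\lambda_{\m(s)} := \widetilde\lambda + \gamma(s)\widetilde\tau^*\beta\theta$. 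The first summand is controlled by $\overline c\,'\|\nabla_s\A^f_{\m(s)}(u,\eta)\|_s$ for
\[ \overline c\,' := \sup_{s \in \R}\; \|\widetilde\lambda_{\m(s)}|_{\widetilde U_\delta}\|_\infty \;\leq\; \|\widetilde\lambda|_{\widetilde U_\delta}\|_\infty + \|\beta\theta\|_\infty \;<\; \infty; \]
the middle summand is trapped between $\frac{1}{2}f(\eta)$ and $\frac{3}{2}f(\eta)$ by the choice of $\delta$; the last summand is bounded by $\overline c$ by definition. Combining gives the two-sided inequality of the lemma under the hypothesis $u([0, \frac{1}{2}]) \subset U_\delta$.

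\textbf{Step 2.} With the same $\delta$, I show that if $\|\nabla_s \A^f_{\m(s)}(u, \eta)\|_s \leq \overline\epsilon f'(\eta)$ for $\overline\epsilon := \min\{\delta/2,\, \delta/(2\|\nabla F\|_\infty)\}$, then $u(t) \in U_\delta$ for all $t \in [0, \frac{1}{2}]$. Because $H(t, \cdot) = 0$ forces $X_H^{\m(s)}(t, \cdot) = 0$ on $[0, \frac{1}{2}]$ by Definitions \ref{def:betadef} and \ref{def:perham}, the Step 2 argument of Lemma \ref{lem:f3} carries over verbatim: one dichotomizes on whether $F(u(t)) \geq \delta/2$ holds throughout $[0, \frac{1}{2}]$. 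In the first case the $\eta$-component of $\nabla_s\A^f_{\m(s)}$ yields a lower bound, while in the second a boundary-to-interior comparison of $\frac{d}{dt}F(u(t))$ does, with $f'(\eta) \leq 1$ absorbing the factor into the hypothesis.

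\textbf{Main obstacle.} The only genuinely new point compared with Lemma \ref{lem:f3} is the uniformity in $s$ of the three constants $\overline c$, $\overline c\,'$, and $\overline\epsilon$. This is mild: uniformity of $\overline c\,'$ uses $\|\gamma\|_\infty \leq 1$ together with the $\widetilde d$-boundedness of $\sigma$, which provides the global bound $\|\beta\theta\|_\infty < \infty$; uniformity of $\overline c$ is built into its definition as a supremum over $s$; and $\overline\epsilon$ depends only on $F$. Once this is in hand, combining Steps 1 and 2 proves the lemma.
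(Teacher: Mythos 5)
Your proof matches the paper's approach: the paper likewise proves Lemma~\ref{lem:m(s)f} by running the two-step argument of Lemma~\ref{lem:f3} with $\m(s)$ in place of $\m$, declaring $\overline\epsilon=\min\{\delta/2,\,\delta/(2\|\nabla F\|_\infty)\}$ and $\overline c\,'=\sup_{s\in\R}\|\widetilde\lambda_{\m(s)}|_{\widetilde U_\delta}\|_\infty$. Your added observation that $\gamma(s)\in[0,1]$ makes all $s$-suprema finite is exactly the point that makes the paper's omitted verification routine, so the proposal is correct and essentially the same as the paper's.
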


\begin{proof}
The proof is similar as in Lemma \ref{lem:f3} with $\sigma(s)$
instead of $\sigma$.
So we omit the proof.
With a simple computation, one checks that
\bean
\overline\epsilon=\overline\epsilon(\delta,F)
:=\min\left\{\frac{\delta}{2},\frac{\delta}{2\|\nabla F\|_\infty}\right\}
\eea
and
\bean
\overline c\,'=\overline c\,'(\theta,\delta)
:=\sup_{s\in\R}\|(\widetilde\lambda+\gamma(s)\widetilde\tau^*\theta_t)|_{\widetilde U_\delta}\|_\infty.
\eea
Here $\delta$ is chosen to satisfy
\bean
\frac{1}{2}+\delta\leq\lambda(X_F(p))\leq\frac{3}{2}-\delta,\quad\forall p\in U_\delta.
\eea
\end{proof}

Before state the next proposition, we summarize the notations as follows:
\bea\label{eqn:bunofnot}
C&=\|\theta\|_\infty ;\\
d_\sigma&=d_{H,\sigma}=\sup_{s\in\R}\|X_H^{\sigma(s)}\|_{\infty} ;\\
d_{F}&=\|X_F\|_{\infty};\\
\Delta&=\A^f_{\theta(1)}(w_+)-\A^f_{\theta(0)}(w_-).
\eea

\begin{Prop}\label{prop:f_etaesm(s)}
Fix $F\in\D(\Sigma)$ and an action window $(a,2a)$ with $a\geq2$.
Let $\overline c, \overline c\,',\overline\epsilon>0$ be the constants from Lemma \ref{lem:m(s)f}.
Choose $f\in\F(\frac{a}{6},\frac{2a+1}{\min\{\overline\epsilon,a/8\overline c\,'\}})$ 
and a generic pair $(H,\theta)$ such that $\overline c=\overline c(H,\theta)\leq\frac{a}{2}$.
Let $w$ be a gradient flow line of $\A^f_{\theta(s)}$
with the following asymptotic conditions
\bean
\lim_{s\to-\infty}w(s)=w_-\in\Crit^{(a,2a)}(\A^f_{\theta(0)}),
\quad \lim_{s\to\infty}w(s)=w_+\in\Crit^{(a,2a)}(\A^f_{\theta(1)}).
\eea
If the isoperimetric constant $C=C(\theta)$ satisfies the following conditions:
\bea\label{eqn:CcondiforAf-case}
C&\leq\frac{1}{4};\\
\left(16\,d_F\,C+4\,d_F+\frac{4\|F\|_\infty d_F}{\overline\epsilon^2}\right)C&\leq\frac{1}{2};\\
8(4d_F C^2+d_F C)(2+4C+\frac{\|F\|_\infty}{\overline\epsilon^2})+4C&\leq\frac{1}{16};\\
8C^2+8d_\sigma C^2+2C+2d_\sigma C& \\
+8(4d_F C^2+d_F C)\frac{\|F\|_\infty}{\overline\epsilon^2}(2C+2d_\sigma C)& \\
+8(4d_F C^2+d_F C)\bigg(8C^2+8C^2d_\sigma +2C+2d_\sigma C+\overline c\,'\overline\epsilon+\overline c
+\bigg)&\leq\frac{1}{8};\\ 
4C+4d_\sigma C+8\frac{\|F\|_\infty}{\overline\epsilon^2}(2\Delta+4C+4d_\sigma C)C& \\
+8(4a+8C\Delta+16C^2+16C^2d_\sigma +4C+4d_\sigma C+2\overline c\,'\overline\epsilon+2\overline c)C&\leq1,
\eea 
then the $L^\infty$-norm of $\eta$ is 
uniformly bounded in terms of a constant which only depends on $w_-$, $w_+$.
\end{Prop}

\begin{proof}
The proof consists of 4 steps.

\vspace{2mm}

{\bf Step 1} : {\em The energy is bounded by $\|f(\eta)\|_\infty$.}

\vspace{2mm}

By a similar argument as in Proposition \ref{prop:conti} Step 1, we obtain
\bea\label{eqn:dotAesf}
\intinf\left|\dot\A^f_{\theta(s)}(w(s))\right|ds\leq2CE(w)+2C+2d_\sigma C+2\|f(\eta)\|_\infty d_{F}C \\
\eea
and
\bea\label{eqn:Eesf}
E(w)\leq 2\Delta+4C+4d_\sigma C+4\|f(\eta)\|_{\infty} d_{F}C,
\eea
under the assumption that the isoperimetric constant $C<\frac{1}{4}$.
\bea\label{eqn:Ccondi-1}
.
\eea

\vspace{2mm}

{\bf Step 2} : {\em $\eta(s)$ is uniformly bounded from above}.

\vspace{2mm}

In this step, without loss of generality, we work on the region that $\eta(s)\geq\frac{a}{6}$.
Since $f\in\F(\frac{a}{6})$, $f(\eta(s))=\eta(s)$ and $f'(\eta(s))=1$.
Then Lemma \ref{lem:m(s)f} implies the following:

There exist $\overline\epsilon,\overline c,\overline c\,'>0$ such that if $(u,\eta)\in\L\times\R$ satisfies
\bean
\|\nabla_s\A^f_{\theta(s)}(u,\eta)\|_s\leq\overline\epsilon
=\overline\epsilon \underbrace{f'(\eta)}_{=1}
\eea 
then 
\bea\label{eqn:fesA}
f(\eta) \leq
2\left(\A^f_{\theta(s)}(u,\eta)+\overline c\,'\|\nabla_s\A^f_{\theta(s)}(u,\eta)\|_s+\overline c\right),
\eea
for all $s\in\R$ satisfying $\eta(s)\geq\frac{a}{6}$.
Here $\overline\epsilon,\overline c,\overline c\,'>0$ are same as in Lemma \ref{lem:m(s)f}.

Now define
\bean
\overline\nu_w(l):=\inf\{\overline\nu\geq0:\|\nabla_s\A^f_{\theta(l+\overline\nu)}(w(l+\overline\nu))\|_s<\overline\epsilon\},
\eea
for $l\in\R$ such that $\eta(l)\geq\frac{a}{6}$.
Then by a similar argument as in (\ref{eqn:nuesenergy}), 
we obtain the following estimate
\bea\label{eqn:barnuesE}
\overline\nu_w(l)\leq\frac{E(w)}{\overline\epsilon^2}.
\eea
By the gradient flow equation (\ref{eqn:fgradeqn}) and (\ref{eqn:barnuesE}), we have
\bea\label{eqn:fetaes1}
\left|\int_l^{l+\overline\nu_w(l)}\frac{d}{ds} f(\eta(s))ds\right|
&\leq\Bigg|\int_l^{l+\overline\nu_w(l)} \underbrace{f'(\eta(s))}_{\leq 1}\frac{d}{ds}\eta(s)ds\Bigg| \\
&\leq\Bigg|\int_l^{l+\overline\nu_w(l)} \frac{d}{ds}\eta(s)ds\Bigg| \\
&=\Bigg|\int_l^{l+\overline\nu_w(l)} \underbrace{f'(\eta(s))}_{\leq 1}\underbrace{\int_0^1 F(t,u)dt}_{\leq\|F\|_\infty}\,ds\Bigg| \\
&\leq\|F\|_\infty\overline\nu_w(l) \\
&\leq\|F\|_\infty\frac{E(w)}{\overline\epsilon^2}.\\
\eea
Let us note that the following inequality holds for all $s\in\R$
\bea\label{eqn:a<A<2a}
a-\intinf\left|\dot\A^f_{\theta(s)}(w(s))\right|ds
\leq\A^f_{\theta(s)}(w(s))
\leq 2a+\intinf\left|\dot\A^f_{\theta(s)}(w(s))\right|ds.
\eea
By the definition of $\overline\nu_w(l)$ 
and the above estimates (\ref{eqn:fesA}), (\ref{eqn:a<A<2a}) and (\ref{eqn:dotAesf}) we get
\bea\label{eqn:fetaes2}
f[\eta(l+\overline\nu_w(l))]
\leq&2\bigg(\A^f_{\theta[l+\overline\nu_w(l)]}[w(l+\overline\nu_w(l))]
+\overline c\,'\underbrace{\|\nabla_s\A^f_{\theta[l+\overline\nu_w(l)]}[w(l+\overline\nu_w(l))]\|_s}_{\leq\overline\epsilon}
+\overline c\bigg) \\
\leq&2\left(2a+\intinf|\dot\A^f_{\theta(s)}(w(s))|ds+\overline c\,'\overline\epsilon+\overline c \right) \\
\leq&2\left(2a+2CE(w)+2C+2d_\sigma C+2\|f(\eta)\|_\infty d_{F}C+\overline c\,'\overline\epsilon+\overline c \right). \\
\eea
Now combine (\ref{eqn:fetaes1}) and (\ref{eqn:fetaes2}), we then obtain
\bea\label{eqn:fesfinf}
f(\eta(l))\leq& f[\eta(l+\overline\nu_w(l))]+\left|\int_l^{l+\overline\nu_w(l)}\frac{d}{ds} f(\eta(s))ds\right| \\
\leq& 2\left(2a+2CE(w)+2C+2d_\sigma C+2\|f(\eta)\|_\infty d_{F}C+\overline c\,'\overline\epsilon+\overline c \right)+\|F\|_\infty\frac{E(w)}{\overline\epsilon^2} \\
\leq& \left(16 C d_F+4d_F+\frac{4\|F\|_\infty d_F}{\overline\epsilon^2}\right)C\|f(\eta)\|_\infty \\
&+4a+8C\Delta+16C^2+16C^2d_\sigma +4C+4d_\sigma C+2\overline c\,'\overline\epsilon+2\overline c\\
&+\frac{\|F\|_\infty}{\overline\epsilon^2}(2\Delta+4C+4d_\sigma C),
\eea
where for the last inequality we use (\ref{eqn:Eesf}).
Note that the last line of the above estimate (\ref{eqn:fesfinf}) does not depend on the choice of a gradient flow line $w$
and $l\in\R$.
By the 2nd assumption in (\ref{eqn:CcondiforAf-case}) on the isoperimetric constant $C$ we have
\bean
\left(16\,d_F\,C+4\,d_F+\frac{4\|F\|_\infty d_F}{\overline\epsilon^2}\right)C\leq\frac{1}{2}
\eea
and we conclude 
\bea\label{eqn:unifeta}
\|f(\eta)\|_\infty\leq&2(4a+8C\Delta+16C^2+16C^2d_\sigma +4C+4d_\sigma C+2\overline c\,'\overline\epsilon+2\overline c)\\
&+\frac{2\|F\|_\infty}{\overline\epsilon^2}(2\Delta+4C+4d_\sigma C)=:\overline\kappa.
\eea
Since $f(\eta(s))=\eta(s)$ for $s\geq\frac{a}{6}$,
this implies that $\overline\kappa$ is an uniform upper bound of $\eta(s)$.\\

\vspace{2mm}

{\bf Step 3} : $\intinf|\dot\A^f_{\theta(s)}(w(s))|ds\leq\frac{a}{8}$.

\vspace{2mm}

The above estimates (\ref{eqn:dotAesf}), (\ref{eqn:Eesf}), (\ref{eqn:unifeta}) and $\Delta<a$ imply that 
\bea\label{eqn:dotAfses}
\intinf|\dot\A^f_{\theta(s)}|ds\leq&2CE(w)+2C+2d_\sigma C+2\|f(\eta)\|_\infty d_{F}C \\
\leq&(8d_F C^2+2d_F C)\|f(\eta)\|_\infty+4aC+8C^2+8d_\sigma C^2+2C+2d_\sigma C \\
\leq&8(4d_F C^2+d_F C)(2a+4aC+8C^2+8C^2d_\sigma +2C+2d_\sigma C+\overline c\,'\overline\epsilon+\overline c)\\
	&+8(4d_F C^2+d_F C)\frac{\|F\|_\infty}{\overline\epsilon^2}(a+2C+2d_\sigma C) \\
	&+4aC+8C^2+8d_\sigma C^2+2C+2d_\sigma C \\
=&\bigg(8(4d_F C^2+d_F C)(2+4C+\frac{\|F\|_\infty}{\overline\epsilon^2})+4C\bigg)a \\
	&+8(4d_F C^2+d_F C)\bigg(8C^2+8C^2d_\sigma +2C+2d_\sigma C+\overline c\,'\overline\epsilon+\overline c\bigg) \\
	&+8(4d_F C^2+d_F C)\frac{\|F\|_\infty}{\overline\epsilon^2}(2C+2d_\sigma C) \\
	&+8C^2+8d_\sigma C^2+2C+2d_\sigma C. \\
\eea
Recall the 3rd, 4th condition in (\ref{eqn:CcondiforAf-case}) 
\bean
8(4d_F C^2+d_F C)(2+4C+\frac{\|F\|_\infty}{\overline\epsilon^2})+4C&\leq\frac{1}{16};\\
8C^2+8d_\sigma C^2+2C+2d_\sigma C& \\
+8(4d_F C^2+d_F C)\frac{\|F\|_\infty}{\overline\epsilon^2}(2C+2d_\sigma C)& \\
+8(4d_F C^2+d_F C)\bigg(8C^2+8C^2d_\sigma +2C+2d_\sigma C+\overline c\,'\overline\epsilon+\overline c
+\bigg)&\leq\frac{1}{8}.
\eea 
Then (\ref{eqn:dotAfses}) is simplified as follows
\bean
\intinf|\dot\A^f_{\theta(s)}|ds\leq
\frac{a}{16}+\frac{1}{8}
\leq\frac{a}{8},
\eea
where for the last inequality we use $a\geq2$.
This proves Step 3.

\vspace{2mm}

{\bf Step 4} : {\em $\eta(s)$ is uniformly bounded.}

\vspace{2mm}
First set
\bean
\underline\epsilon:=\min\left\{\overline\epsilon,\frac{a}{8\overline c\,'}\right\},
\eea
and define a function $\underline\nu_w:\R\to\R^{\geq0}$ by
\bean
\underline\nu_w(l):=\inf\{\underline\nu\geq 0 : \|\nabla_s\A^f_{\theta(l+\underline\nu)}(w(l+\underline\nu))\|_s<\underline\epsilon\,f'(\eta(l+\underline\nu))\}.
\eea 
Now set
\bean
\underline i_w(l):=\inf_{l\leq s\leq l+\underline\nu_w(l)}f'(\eta(s)).
\eea
By similar arguments as in (\ref{eqn:nuesb-a}) and (\ref{eqn:dotetaesb-a}), 
we obtain the following estimates
\bean
\underline\nu_w(l)\leq\frac{E(w)}{\underline\epsilon^2\,\underline i_w(l)^2}
\eea
and
\bean
|\eta(l)-\eta(l+\underline\nu_w(l))|\leq\frac{E(w)}{\underline\epsilon\,\underline i_w(l)}.
\eea
Lemma \ref{lem:m(s)f} implies that for any $l\in\R$
\bea\label{eqn:fetaesA,a}
f[\eta(l+\underline\nu_w(l))]
\geq&\frac{2}{3}\left(\A^f_{\theta[l+\underline\nu_w(l)]}[w(l+\underline\nu_w(l))]
-\overline c\,'\|\nabla_s\A^f_{\theta[l+\underline\nu_w(l)]}[w(l+\underline\nu_w(l))]\|_s-\overline c\right) \\
\geq&\frac{2}{3}\bigg(\A^f_{\theta[l+\underline\nu_w(l)]}[w(l+\underline\nu_w(l))]
-\underbrace{\overline c\,'\underline\epsilon}_{\leq\frac{a}{8}} \underbrace{f'[\eta(l+\underline\nu_w(l))]}_{\leq 1}-\frac{a}{2}\bigg) \\
\geq&\frac{2}{3}\left(\A^f_{\theta[l+\underline\nu_w(l)]}[w(l+\underline\nu_w(l))]-\frac{5}{8}a\right).
\eea
Here the 2nd inequality in (\ref{eqn:fetaesA,a}) comes from 
the definition of $\underline\nu_w$ and the assumption $\overline c\leq\frac{a}{2}$,
the 3rd inequality use the definition of $\underline\epsilon$ and $f'\leq1$.
The action estimate (\ref{eqn:a<A<2a}) and Step 3 give us the following estimate
\bea\label{eqn:Afesa}
\A^f_{\theta(s)}(w(s))\geq a-\intinf\left|\dot\A^f_{\theta(s)}(w(s))\right|ds\geq \frac{7}{8}a.
\eea
Let us combine (\ref{eqn:fetaesA,a}), (\ref{eqn:Afesa}) to obtain
\bean
f[\eta(l+\underline\nu_w(l))]
\geq\frac{2}{3}\left(\A^f_{\theta[l+\underline\nu_w(l)]}[w(l+\underline\nu_w(l))]-\frac{5}{8}a\right)
\geq\frac{a}{6}.
\eea
Since $f\in\F(\frac{a}{6})$,
\bean
\eta(l+\underline\nu_w(l))\geq\frac{a}{6}>0,
\eea
and hence
\bean
\eta(l)\geq\frac{a}{6}-\frac{E(w)}{\underline\epsilon\,\underline i_w(l)}
\geq-\frac{E(w)}{\underline\epsilon\,\underline i_w(l)}.
\eea
As a consequence,
\bean
-f'(\eta(l))\eta(l)\leq-\underline i_w(l)\eta(l)
\leq\frac{E(w)}{\underline\epsilon}
\leq\frac{1}{\underline\epsilon}(2\Delta+4C+4d_\sigma C+4\overline\kappa d_F C),
\eea
where the last inequality comes from (\ref{eqn:Eesf}) and 
(\ref{eqn:unifeta}).
By the last condition in (\ref{eqn:CcondiforAf-case}) for the isoperimetric constant $C$ we estimate
\bea\label{eqn:fCcondi2}
(4+4d_\sigma +4\overline\kappa d_F)C\leq 1,
\eea
then
\bean
-f'(\eta(l))\eta(l)\leq\frac{2a+1}{\underline\epsilon},
\eea
here we use again $\Delta<a$.
Since $f\in\F(\frac{a}{6},\frac{2a+1}{\underline\epsilon})$,
there exists $A>0$ such that
\bean
Af'(-A)>\frac{2a+1}{\underline\epsilon}.
\eea
Now suppose that there exists $l_0\in\R$ such that
$\eta(l_0)<-A$ then by continuity 
there exists $l_1\in\R$ such that
$\eta(l_1)=-A$ which leads to a contradiction via condition (\ref{eqn:fcondi2})
\bean
\frac{2a+1}{\underline\epsilon}
< f'(-A)A=-f'(\eta(l_1))\eta(l_1)
<\frac{2a+1}{\underline\epsilon}.
\eea
Thus we conclude that $\eta(l)>-A$ for all $l\in\R$,
and hence
\bean
\|\eta(l)\|_\infty\leq\kappa:=\max\{\overline\kappa,A\}.
\eea
\end{proof}

\begin{Lemma}\label{lem:acvales}
Fix $F\in\D(\Sigma)$ and an action window $(a,2a)$ such that $a\geq2$.
Let $\overline c, \overline c\,',\overline\epsilon>0$ be the constants from Lemma \ref{lem:m(s)f}.
Choose $f\in\F(\frac{a}{6},\frac{2a+1}{\min\{\overline\epsilon,a/8\overline c\,'\}})$ 
and a generic pair $(H,\theta)$ such that $\overline c=\overline c(H,\theta)\leq\frac{a}{2}$.
Let $w$ be a gradient flow line of $\A^f_{\theta(s)}$
with the following asymptotic conditions:
\bean
\lim_{s\to-\infty}w(s)=w_-\in\Crit^{(a,2a)}(\A^f_{\theta(0)}),
\quad \lim_{s\to\infty}w(s)=w_+\in\Crit^{(a,2a)}(\A^f_{\theta(1)}).
\eea
If the isoperimetric constant $C=C(\theta)$ satisfies the following condition
\bea\label{eqn:fCcondi3}
8d_F\left(\frac{\|F\|_\infty}{\overline\epsilon}+4C+1\right)C&\leq\frac{1}{9}; \\
2\bigg(1+d_\sigma +8d_F\frac{\|F\|_\infty}{\overline\epsilon^2}C+8d_F d_\sigma \frac{\|F\|_\infty}{\overline\epsilon^2}C& \\
+32d_F C^2+32d_F d_\sigma C^2+8d_F C+8d_F d_\sigma  C+ 4\overline c\,'\overline\epsilon d_F+4\overline c d_F\bigg)C&\leq\frac{1}{9};
\eea
then
\bean
\A^f_{\theta(1)}(w_+)\geq\frac{9}{10}\A^f_{\theta(0)}(w_-)-\frac{1}{10}.
\eea

\end{Lemma}

\begin{proof}
For notational simplicity, let us denote by
\bean
p=\A^f_{\theta(0)}(w_-),\quad q=\A^f_{\theta(1)}(w_+).
\eea
By Step 2 in Proposition \ref{prop:f_etaesm(s)}, $f(\eta)$ is uniformly bounded as follows,
\bean
\|f(\eta)\|_\infty\leq&2(2q+8C(q-p)+16C^2+16C^2d_\sigma +4C+4d_\sigma C+2\overline c\,'\overline\epsilon+2\overline c)\\
&+\frac{2\|F\|_\infty}{\overline\epsilon^2}(2(q-p)+4C+4d_\sigma C).
\eea
Since $E(w)\geq0$, we obtain the following inequality from (\ref{eqn:Eesf})
\bean
q\geq p-2C-2d_\sigma C-2\|f(\eta)\|_\infty d_F C.
\eea
Now we estimate
\bean
q\geq&p-2C-2d_\sigma C-2\|f(\eta)\|_\infty d_F C\\
\geq&p-2C-2d_\sigma C-4d_F C\bigg(\frac{\|F\|_\infty}{\overline\epsilon^2}(2(q-p)+4C+4d_\sigma C)\\
&+2q+8C(q-p)+16C^2+16C^2d_\sigma +4C+4d_\sigma C+2\overline c\,'\overline\epsilon+2\overline c\bigg)\\
=&p+8d_F\left(\frac{\|F\|_\infty}{\overline\epsilon^2}+4C\right)C\,p
-8d_F\left(\frac{\|F\|_\infty}{\overline\epsilon}+4C+1\right)C\,q
-2\bigg(1+d_\sigma +8d_F\frac{\|F\|_\infty}{\overline\epsilon^2}C\\
&+8d_F d_\sigma \frac{\|F\|_\infty}{\overline\epsilon^2}C
+32d_F C^2+32d_F d_\sigma C^2+8d_F C+8d_F d_\sigma  C+ 4\overline c\,'\overline\epsilon d_F+4\overline c d_F\bigg)C \\
\geq&p+\underbrace{8d_F\left(\frac{\|F\|_\infty}{\overline\epsilon^2}+4C\right)C}_{\geq0}\,p-\frac{1}{9}q-\frac{1}{9} \\
\geq&p-\frac{1}{9}q-\frac{1}{9}.
\eea
Here the 4th inequality we use the assumption (\ref{eqn:fCcondi3}) on $C$.
This proves the assertion.
\end{proof}

For convenience, let us abbreviate
\bea\label{eqn:defofh[p]}
h[p]&:=\frac{9}{10}p-\frac{1}{10}.
\eea

\begin{Lemma}\label{lem:comparison}
Fix $F\in\D(\Sigma)$ and an action window $(a,2a)$ such that $a\geq2$.
Let $\overline c, \overline c\,',\overline\epsilon>0$ be the constants from Lemma \ref{lem:m(s)f}.
Choose $f\in\F(\frac{a}{6},\frac{2a+1}{\min\{\overline\epsilon,a/8\overline c\,'\}})$ 
and a generic pair $(H,\theta)$ such that $\overline c=\overline c(H,\theta)\leq\frac{a}{2}$.
If the isoperimetric constant $C=C(\theta)$ satisfies the conditions 
in Proposition \ref{prop:f_etaesm(s)} and Lemma \ref{lem:acvales}
then there exists a commutative diagram:
\bean
\xymatrix{
\FH^{(h^{-2}[a],2a)}(\A^f_{\theta(0)}) \ar[rr]^{i(h^{-2}[a],h^2[2a])} \ar[rd]_{\widetilde\Phi^\sigma} &&\FH^{(a,h^2[2a])}(\A^f_{\theta(0)}) \\
&\FH^{(h^{-1}[a],h[2a])}(\A^f_{\theta(1)}) \ar[ru]_{\widetilde\Phi_\sigma}
}
\eea
\end{Lemma}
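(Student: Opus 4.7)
The plan is to construct the two continuation chain maps $\widetilde\Phi^\m$ and $\widetilde\Phi_\m$ by counting gradient flow lines of $s$-dependent Rabinowitz action functionals interpolating between $\A^f_{\m(0)}$ and $\A^f_{\m(1)}$, and then to identify the composition with the natural comparison map $i$ via a homotopy-of-homotopies argument.

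First I would define $\widetilde\Phi^\m$ by counting solutions $w=(u,\eta)$ of the $s$-dependent gradient flow equation associated to $\A^f_{\m(s)}$ with asymptotic conditions $w(-\infty)\in\Crit(\A^f_{\m(0)})$ and $w(+\infty)\in\Crit(\A^f_{\m(1)})$. Compactness of the moduli spaces requires an $L^\infty$ bound on the Lagrange multiplier $\eta$, supplied by Proposition \ref{prop:f_etaesm(s)} in the window $(a,2a)$, together with the absence of sphere bubbling, which holds because $(T^*N,\om_{\m(s)})$ is symplectically aspherical for every $s$ (Proposition \ref{prop:asphe}). Since $\A^f_{\m(i)}$ increases along its positive gradient flow, $\FC^{(A,\infty)}$ is a subcomplex and $\FC^{(A,B)}=\FC^{(A,\infty)}/\FC^{(B,\infty)}$. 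Lemma \ref{lem:acvales} asserts $\A^f_{\m(1)}(w_+)\geq h[\A^f_{\m(0)}(w_-)]$ along the flow lines defining $\widetilde\Phi^\m$, so $\widetilde\Phi^\m$ maps $\FC^{(A_0,\infty)}(\A^f_{\m(0)})$ into $\FC^{(h[A_0],\infty)}(\A^f_{\m(1)})$. Applying this both at the lower and upper endpoints of any window produces
\begin{equation*}
\widetilde\Phi^\m:\FH^{(A_0,B_0)}(\A^f_{\m(0)})\to\FH^{(h[A_0],h[B_0])}(\A^f_{\m(1)}),
\end{equation*}
and specializing to $(A_0,B_0)=(h^{-2}[a],2a)$ recovers the first arrow of the diagram with target $(h^{-1}[a],h[2a])$.

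I would then define $\widetilde\Phi_\m$ by repeating the construction with the reverse homotopy $\bar\m(s):=\m(1-s)$, which interpolates from $\m(1)$ at $s=-\infty$ to $\m(0)$ at $s=+\infty$. Neither the compactness argument of Proposition \ref{prop:f_etaesm(s)} nor the action inequality of Lemma \ref{lem:acvales} uses the orientation of the homotopy; both depend only on the smallness of the isoperimetric constant $C=\|\beta\|_\infty\|\theta\|_\infty$. Hence their proofs carry over verbatim and yield
\begin{equation*}
\widetilde\Phi_\m:\FH^{(A_1,B_1)}(\A^f_{\m(1)})\to\FH^{(h[A_1],h[B_1])}(\A^f_{\m(0)}),
\end{equation*}
which for $(A_1,B_1)=(h^{-1}[a],h[2a])$ produces the second arrow with target $(a,h^2[2a])$. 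The composition $\widetilde\Phi_\m\circ\widetilde\Phi^\m$ then coincides with the continuation map of the concatenated homotopy $\m(s)\#\bar\m(s)$, which starts and ends at $\m(0)$. Linearly interpolating this concatenated homotopy, in the space of homotopies, to the constant homotopy at $\m(0)$ produces a two-parameter family whose moduli provides a chain homotopy between $\widetilde\Phi_\m\circ\widetilde\Phi^\m$ and the identity continuation on $\FC(\A^f_{\m(0)})$. On filtered Floer homology, the identity continuation realizes the natural comparison map $i$ obtained as the composition of the inclusion $\FC^{(h^{-2}[a],\infty)}\hookrightarrow\FC^{(a,\infty)}$ (valid since $h^{-2}[a]>a$) with the quotient map to $\FC^{(a,\infty)}/\FC^{(h^2[2a],\infty)}$ (valid since $2a>h^2[2a]$ implies $\FC^{(2a,\infty)}\subset\FC^{(h^2[2a],\infty)}$).

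The principal obstacle is keeping all the smallness hypotheses on $\|\beta\|_\infty$ compatible. Proposition \ref{prop:f_etaesm(s)} and Lemma \ref{lem:acvales}, applied to the forward homotopy, to the reverse homotopy, and to the two-parameter chain homotopy above, each impose a condition of the form $C\leq\delta_i$ with constants $\delta_i>0$ independent of $\|\beta\|_\infty$; this is a finite system satisfiable by shrinking $\|\beta\|_\infty$. A secondary technical point is the simultaneous Morse genericity of $\A^f_{\m(i)}$ and of the family of interpolating action functionals, which is handled by the standard Sard-Smale arguments of Appendix \ref{app:gen}.
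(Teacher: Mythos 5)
Your proof is correct and follows essentially the same route as the paper: it constructs $\widetilde\Phi^\m$ and $\widetilde\Phi_\m$ as continuation maps for the forward and reverse homotopies of the magnetic data, invokes Proposition \ref{prop:f_etaesm(s)} plus symplectic asphericity for compactness, uses the action inequality of Lemma \ref{lem:acvales} to track the filtration, and closes with a homotopy-of-homotopies argument identifying the composition with the inclusion-induced comparison map $i$. Your spelled-out description of the subquotient structure $\FC^{(A,B)}=\FC^{(A,\infty)}/\FC^{(B,\infty)}$ and of the compatibility of the finitely many smallness constraints on $\|\beta\|_\infty$ is only a more explicit rendering of what the paper leaves implicit.
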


\begin{proof}
Let us first construct $\widetilde\Phi^\sigma$.
Let $w$ be the gradient flow line of $\A^f_{\theta(s)}$ 
satisfying the limit conditions:
\bean
\lim_{s\to-\infty}w(s)=w_-\in\Crit^{(h^{-2}[a],2a)}_k(\A^f_{\theta(0)}),\qquad
\lim_{s\to\infty}w(s)=w_+\in\Crit^{(h^{-1}[a],h[2a])}_k(\A^f_{\theta(1)}).
\eea
Let $\M(w_-,w_+)$ be the moduli space of such gradient flow lines.
In order to compactify the moduli space $\M(w_-,w_+)$,
by similar arguments in Theorem \ref{thm:continuation},
it suffices to bound the energy $E(w)=\intinf\|\frac{d}{ds}w(s)\|^2_sds$ 
and the Lagrange multiplier $\eta$ in terms of $w_-,w_+$.
By the assumption on the isoperimetric constant $C$,
we can use the argument of Proposition \ref{prop:f_etaesm(s)} and Lemma \ref{lem:acvales}.
Especially (\ref{eqn:Eesf}), (\ref{eqn:unifeta}) give us the following uniform energy bound
\bean
E(w)
\leq& 2\A^f_{\theta(1)}(w_+)-2\A^f_{\theta(0)}(w_-)+4\,C+4d_\sigma C+4\|f(\eta)\|_{\infty} d_{F}\,C \\
\leq& 2\A^f_{\theta(1)}(w_+)-2\A^f_{\theta(0)}(w_-)+4\,C+4d_\sigma C+4\,\overline\kappa(w_-,w_+)\,d_{F}\,C
\eea
and Proposition \ref{prop:f_etaesm(s)} enables us to conclude that
the Lagrange multiplier $\eta$ is also uniformly bounded.

If $\mu(w_-)=\mu(w_+)$, then $\M(w_-,w_+)$ is discrete
for a generic almost complex structure $J(s,t)\in\J_{\sigma(s)}$.
By virtue of Lemma \ref{lem:acvales},
we now define a map
\bean
\Phi^\sigma_*:\FC_*^{(h^{-2}[a],2a)}(\A^f_{\theta(0)})\to \FC_*^{(h^{-1}[a],h[2a])}(\A^f_{\theta(1)})
\eea
given by
\bean
\Phi^\sigma_*(w_-)=\sum_{\mu(w_+)=\mu(w_-)}\#_2\M(w_-,w_+)w_+,
\eea
where $\#_2$ means $\Z_2$ counting.
Since the continuation map $\Phi^\sigma$ commutes with the boundary operators,
this induces the following homomorphism on homologies as follows
\bean
\widetilde\Phi^\sigma:\FH^{(h^{-2}[a],2a)}(\A^f_{\theta(0)})\to \FH^{(h^{-1}[a],h[2a])}(\A^f_{\theta(1)}).
\eea

Now we consider the inverse homotopy of $\A^f_{\theta(s)}$.
By modifying the above construction, we obtain 
\bean
\widetilde\Phi_\sigma:\FH^{(h^{-1}[a],h[2a])}(\A^f_{\theta(1)})\to \FH^{(a,h^2[2a])}(\A^f_{\theta(0)}).
\eea
By a homotopy-of homotopies argument, 
we conclude that $\widetilde\Phi_\sigma\circ\widetilde\Phi^\sigma$ 
is the identity map on $\FH^{(h^{-2}[a],h^2[2a])}(\A^f_{\theta(0)})$.
This proves the lemma.
\end{proof}

\begin{Lemma}\label{lem:dimcompa0}
Fix $F\in\D(\Sigma)$ and 
$f\in\widetilde\F(\frac{1}{6})$, 
see Definition \ref{def:F(a,r)}.
Choose a generic pair $(H,\theta)$ such that 
the isoperimetric constant $C=C(\theta)$ satisfies the conditions 
in Proposition \ref{prop:f_etaesm(s)} and Lemma \ref{lem:acvales}.
If $\max\{2,2\overline c(H,\theta)\}<a<T<\infty$ then
\bea\label{eqn:compa0con}
\dim\FH^{(h^{-1}[a],h[T])}(\A^f_\theta)
\geq\frac{1}{4}\dim\FH^{(h^{-2}[a],h^2[T])}(\A^f)
\eea
holds for generic $a,T$, see (\ref{eqn:defofh[p]}) for $h[p]$.
\end{Lemma}

\begin{proof}
By the commutative diagram in Lemma \ref{lem:comparison},
we obtain the following dimension estimate
\bea\label{eqn:dimcomp0}
\dim\FH^{(h^{-1}[a],h[2a])}(\A^f_\theta)
&\geq\rank\big(i(h^{-2}[a],h^2[2a])\big) \\
&\geq\dim\FH^{(h^{-2}[a],h^2[2a])}(\A^f).
\eea 
Actually if we choose $b\in\R$ such that $a<b<2a$ then
\bean
\dim\FH^{(h^{-1}[a],h[b])}(\A^f_\theta)
&\geq\dim\FH^{(h^{-2}[a],h^2[b])}(\A^f)
\eea 
holds under the generic condition 
$h[b]\notin A(\A^f_\theta)$ and $h^2[b]\notin A(\A^f)$.

Now we construct a sequence $\{a_i\}_{i=1}^\infty$ such that
the following holds:
\begin{itemize}
\item $a_1\geq\max\{2,2\overline c(H,\theta)\}$;
\item $a_{i+1}=h^2[2a_i]$;
\item $h^{-1}[a_i]\notin A(\A^f_\theta),\ \forall\,i\in\N$;
\item $h^{-2}[a_i],h^2[2a_i]\notin A(\A^f),\ \forall\,i\in\N.$
\end{itemize}
Note that $a_1$ determines the sequence 
and obviously $\{a_i\}$ is strictly increasing.
The 3rd and 4th conditions are guaranteed for a generic $a_1$.
Let $\mathfrak a$ be the set of sequences satisfying the
above conditions.

In order to compare $\dim\FH^{(h^{-1}[a],h[T])}(\A^f_\theta)$
and $\dim\FH^{(h^{-2}[a],h^2[T])}(\A^f)$,
we use (\ref{eqn:dimcomp0}) inductively.
Choose $\{a_i\}\in\mathfrak a$ then the following holds:
\bea\label{eqn:dimcomp1}
\dim\FH^{(h^{-1}(a_1),h(2a_k))}(\A^f_\theta)
&=\sum_{i=1}^{k}\dim\FH^{(h^{-1}[a_i],h[2a_i])}(\A^f_\theta) \\
&\geq\sum_{i=1}^{k}\dim\FH^{(h^{-2}[a_i],h^2[2a_i])}(\A^f).
\eea
But there exist missing action intervals for $\A^f$ 
in the last term of (\ref{eqn:dimcomp1}).
To cover the missing intervals,
we first observe that if $a\geq2$ 
then the length of the action intervals for $\A^f_\theta$ and $\A^f$
\bean
h[2a]-h^{-1}[a],\quad h^2[2a]-h^{-2}[a]
\eea
are positive and increasing functions with respect to $a$.
By a simple computation, one can check that its ratio satisfies
\bean
\frac{h[2a]-h^{-1}[a]}{h^2[2a]-h^{-2}[a]}\leq 4
\eea
for all $a\geq 2$.
This implies that there exist 4 sequences 
$\{a^1_i\},\{a^2_i\},\{a^3_i\},\{a^4_i\}\in\mathfrak a$
such that
$a^1_1<a^2_1<a^3_1<a^4_1<a^1_2$ and
\bean
\big(h^{-2}[a^1_k],h^2[2a^1_k]\big)\cup
\bigcup_{i=1}^{k-1}\bigcup_{j=1}^4
\big(h^{-2}[a^j_i],h^2[2a^j_i]\big)
\eea
covers $(h^{-2}[a^1_1],h^2[2a^1_k])\subset\R^+$ for any $k\in\N$.

Now we obtain the following estimate
\bean
4\dim\FH^{(h^{-1}[a^1_1],h[2a^1_k])}(\A^f_\theta)
&\geq\sum_{i=1}^{k-1}\sum_{j=1}^4
\dim\FH^{(h^{-1}[a^j_i],h[2a^j_i])}(\A^f_\theta)
+\dim\FH^{(h^{-1}[a^1_k],h[2a^1_k])}(\A^f_\theta) \\
&\geq\sum_{i=1}^{k-1}\sum_{j=1}^4
\dim\FH^{(h^{-2}[a^j_i],h^2[2a^j_i])}(\A^f)
+\dim\FH^{(h^{-2}[a^1_k],h^2[2a^1_k])}(\A^f) \\
&\geq\dim\FH^{(h^{-2}[a^1_1],h^2[2a^1_k])}(\A^f).
\eea
This proves the lemma. 

\end{proof}

\begin{Prop}\label{lem:dimcompa}
Fix $F\in\D(\Sigma)$ and $f\in\widetilde\F(\frac{1}{6})$,
see Definition \ref{def:F(a,r)}.
Let $\overline c, \overline c\,',\overline\epsilon>0$ be the constants from Lemma \ref{lem:m(s)f}.
Choose a generic pair $(H,\theta)$.
If $\max\{h^{-1}[2],h^{-1}[2\overline c(H,\theta)]\}<a<T<\infty$
then there exists
\bean
n=n(N,g,F,\overline c\,',\overline\epsilon,H,\theta)\in\N
\eea
such that
\bean
\dim\FH^{(a,T)}(\A^f_\theta)\geq
\frac{1}{4^n}\dim\FH^{(h^{-n}[a],h^{n}[T])}(\A^f)
\eea
holds for generic $a,T$, see (\ref{eqn:defofh[p]}) for $h[p]$.
\end{Prop}

\begin{proof}
In order to use Lemma \ref{lem:dimcompa0},
we first introduce a sequence of primitive of magnetic perturbation $\{\theta^i\}_{i=0}^n\subset\mathcal P$  
which satisfies the following properties:
\begin{itemize}
\item $\theta^i=d^i\theta$, where $0=d^0<d^1<\cdots<d^n=1$;
\item $\A^f_{\theta^i}:\L\times\R\to\R$ is Morse for all $i=0,1,\dots,n$;
\item $C^i=(d^{i+1}-d^i)\|\theta\|_\infty$ satisfies 
the assumption of Proposition \ref{prop:f_etaesm(s)} and Lemma \ref{lem:acvales} for all $i=0,1,\dots,n-1$.
\end{itemize}
Note that the {\em subdivision number} $n$ for $\theta$ does not depend on the action window.
Now choose $a,T$ such that the following conditions hold:
\begin{itemize}
\item $\max\{h^{-1}[2],h^{-1}[2\overline c(H,\theta)]\}<a<T<\infty$; 
\item $h^{-n+i}[a],h^{n-i}[T]\notin A(\A^f_{\theta^i})\ \forall i=0,1,\dots,n$.
\end{itemize}
By the above second condition,
$\FH^{(h^{-n+i}[a],h^{n-i}[T])}(\A^f_{\theta^i})$
are well-defined for $0\leq i\leq n$.
Now we are ready to apply Lemma \ref{lem:dimcompa0}.
If we use (\ref{eqn:compa0con}) inductively then we conclude that
\bean
\dim\FH^{(a,T)}(\A^f_\theta)
&=\dim\FH^{(a,T)}(\A^f_{\theta^n}) \\
&\geq\frac{1}{4}\dim\FH^{(h^{-1}[a],h[T])}(\A^f_{\theta^{n-1}}) \\
&\geq \cdots \\
&\geq\frac{1}{4^n}\dim\FH^{(h^{-n}[a],h^{n}[T])}(\A^f_{\theta^0}) \\
&=\frac{1}{4^n}\dim\FH^{(h^{-n}[a],h^{n}[T])}(\A^f_\theta).
\eea
This proves the lemma. 
\end{proof}

\begin{Rmk}\label{rmk:ncondi}
The argument in Proposition \ref{lem:dimcompa} holds for any $F\in\D(\Sigma)$ and any generic $(H,\theta)$.
Note $\overline c\,',\overline\epsilon$ depend on $F$ and a $\delta-$\nbd of $F^{-1}(0)$. 
For a given diffeomorphism $\varphi$,
consider all defining data $(H,\theta)$ for $\varphi$ 
such that $\varphi=\phi_{X_H^\sigma}^1$.
Now we consider
\bean
n':=\inf_{(H,\theta)}\inf_{(F,\delta)}n(N,g,F,\overline c\,',\overline\epsilon,H,\theta)
\eea
then $n'$ depends only on $(N,g,\Sigma,\varphi)$.
By abuse of notation, we write $n=n'$. 
\end{Rmk}

\begin{proof}[Proof of Theorem \ref{thm:fconti}]
We first fix a defining Hamiltonian $F$ for $\Sigma$
and a defining data $(H,\theta)$ for $\varphi$.
Choose $f\in\widetilde\F(\frac{1}{6})$,
see Definition \ref{def:F(a,r)}.
By the generic assumption,
$\varphi$ has no periodic leaf-wise intersection point and
$\A^f_\theta$ is Morse for the action window 
$\big(\frac{1}{6}+\overline c(H,\theta),\infty\big]$, see Corollary \ref{cor:afmorse}.

If we choose a generic action value $a,T$ such that 
$\max\{h^{-1}[2],h^{-1}[2\overline c(H,\theta)]\}<a<T<\infty$
then Proposition \ref{prop:critmlwip} and Proposition \ref{lem:dimcompa} imply that
\bea\label{eqn:mlwipesdim}
n(T)
&\geq\#\Crit^{(\frac{1}{6}+\overline c(H,\theta),T-\overline c(H,\theta))}(\A^f_\theta) \\
&\geq\dim\FH^{(a,T-\overline c(H,\theta))}(\A^f_\theta) \\
&\geq\frac{1}{4^n}\dim\FH^{(h^{-n}[a],h^n[T-\overline c(H,\theta)])}(\A^f).
\eea
Here $n=n(N,g,\Sigma,\varphi)\in\N$ is the constant from 
Proposition \ref{lem:dimcompa} with Remark \ref{rmk:ncondi}.

Now we recall that $\L_N$ is the free loop space of $(N,g)$. 
The energy functional $\E:\L_N\to\R$ is given by
\bean
\E(q):=\int_0^1\frac{1}{2}|\dot q|^2dt.
\eea
For given $0<T<\infty$, denote by
\bean
\L_N(T):=\bigg\{q\in\L_N\,:\,\E(q)\leq\frac{1}{2}T^2\bigg\}.
\eea
By the result of Macarini-Merry-Paternain \cite[Proof of Theorem A, Remark 1.4]{MMP},
there exists a constant $c'=c'(N,g,\Sigma,\varphi)>0$ such that
\bean
\dim\FH^{(h^{-n}[a],h^n[T-\overline c(H,\theta)])}(\A^f)\geq
\rank\{\iota:\H_*\big(\L_N(c'(T-1))\big)\to\H_*(\L_N)\}.
\eea
If $c:=\max\{4^n,c'\}>0$ then finally we obtain
\bean
n(T)
\geq&\frac{1}{4^n}\rank\{\iota:\H_*\big(\L_N(c'(T-1))\big)\to\H_*(\L_N)\} \\
\geq&\frac{1}{c}\cdot\rank\{\iota:\H_*\big(\L_N(c(T-1))\big)\to\H_*(\L_N)\}.
\eea
This proves the theorem.
\end{proof}

\appendix

\section{The perturbed Rabinowitz action functional is \\ generically Morse.}\label{app:gen}

In this section we study the Morse property of
the perturbed Rabinowitz action functional.
Note first that the action functional $\A_\theta=\A^F_{H,\theta}$
is determined by the following data $F\in\D(\Sigma)$, $H\in\Hc$ and
$\theta\in\mathcal P$.
We claim that $\A_\theta$ is Morse for generic $(H,\theta)\in\Hc\times\mathcal P$.
The generic property for $H\in\Hc$ is well-studied in \cite[Appendix A]{AF09}.
So we additionally consider the Morse property of $\A_\theta$ 
with respect to $\theta\in\mathcal P$.
First recall that
\bean
\mathcal P=\{\theta\in C^\infty(\R/\Z,\Om^1(\widetilde N))\,:\,\theta_t=0,\ \forall t\in[0,\frac{1}{2}]\text{ and }
\theta_t\text{ is bounded},\ \forall t\in[\frac{1}{2},1]\}.
\eea

\begin{Thm}\label{thm:gen}
For a generic pair $(H,\theta)\in\Hc\times\mathcal P$,
the perturbed Rabinowitz action functional $\A_\theta$ is Morse.
\end{Thm}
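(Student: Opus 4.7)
The plan is to follow the standard Sard--Smale scheme. I would define the \emph{universal critical point space}
\bean
\mathcal{Z}:=\bigl\{(u,\eta,\beta)\in\L\times\R\times\B\,:\,(u,\eta)\in\Crit(\A^F_{H,(\beta,\sigma,\theta)})\bigr\},
\eea
where $\L$ is taken in a suitable Banach completion (say $W^{1,p}(S^1,T^*N)$ with $p>2$) and $\B$ is endowed with a Floer $C^\infty_\varepsilon$--norm, so that Sard--Smale applies. First I would show that $\mathcal{Z}$ is a Banach manifold by verifying that the linearisation of
\bean
\mathcal{F}(u,\eta,\beta):=\bigl(\p_tu-\eta X_F(t,u)-X_H^\m(t,u),\ \textstyle\int_0^1 F(t,u(t))dt\bigr)
\eea
is surjective at every zero. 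Once this is in place, the projection $\pi:\mathcal{Z}\to\B$ is Fredholm of index zero, since its vertical differential along a fibre is exactly the Hessian of $\A_\m$, which in Floer's interpretation takes the form $J(t)(\p_t+S(t))$ for a time-dependent symmetric $S(t)$, augmented by a one-dimensional block for the Lagrange multiplier. This is a compact perturbation of a self-adjoint first-order elliptic operator, hence Fredholm of index $0$. Sard--Smale then gives a residual subset of $\B$ of regular values of $\pi$, and regular values are precisely those $\beta\in\B$ for which the Hessian of $\A_\m$ at every critical point is invertible, i.e.\ $\A_\m$ is Morse.

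The core of the argument reduces to the transversality step. Fix a zero $(u_0,\eta_0,\beta_0)$ of $\mathcal{F}$. Since the $(u,\eta)$-linearisation is Fredholm of index zero, it suffices to verify that its finite-dimensional cokernel can be hit by $\hat\beta$-variations. Here I would exploit the structural fact that $\beta_0\equiv 0$ on $[0,\tfrac{1}{2}]$ and may be varied freely on $(\tfrac{1}{2},1]$, which is precisely the portion of $S^1$ on which $X_H^\m$ is non-trivial (for $t\leq\tfrac{1}{2}$, $H=0$ so $X_H^\m=0$ regardless of $\beta$, and $u_0$ is forced to lie along a Reeb trajectory in $\Sigma$). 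Differentiating $\iota_{X_H^\m}\om_\m=dH$ at $\beta=\beta_0(t)$ in the scalar direction $\hat\beta(t)$ produces the pointwise variation
\bean
\tfrac{\p}{\p\beta}X_H^\m(t,x)=-Y_\m(t,x),\qquad
\iota_{Y_\m}\om_\m=\tau^*\sigma(X_H^\m,\cdot),
\eea
which is non-zero whenever the horizontal component of $X_H^\m$ is not annihilated by $\sigma$. A bump $\hat\beta$ concentrated near a chosen $t_0\in(\tfrac12,1)$ thus perturbs the equation on an arbitrarily small interval by (approximately) $\hat\beta(t_0)Y_\m(t_0,u_0(t_0))$. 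Testing against any non-zero cokernel element $\xi$, which solves an adjoint linear first-order ODE and therefore obeys unique continuation, one is reduced to finding $t_0\in(\tfrac{1}{2},1)$ at which $\xi(t_0)$ is non-zero and pairs non-trivially with $Y_\m(t_0,u_0(t_0))$; unique continuation forbids $\xi$ from vanishing on any open subinterval, so the only obstruction is degeneracy of the pairing $\langle\xi,Y_\m\rangle$.

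The main obstacle is precisely this last non-degeneracy check: $\tau^*\sigma$ annihilates the vertical directions in $T^*N$, so $Y_\m$ can in principle be fibre-tangent even when $X_H^\m$ is non-zero. The way around this is to combine the unique-continuation argument above with the generic property of the Hamiltonian $H\in\Hc$ already exploited in \cite[Appendix~A]{AF09}: for $H$ in a dense open subset of $\Hc$, the horizontal component of $X_H^\m(t,u_0(t))$ is non-zero on a dense subset of $(\tfrac{1}{2},1)$, and at any such $t_0$ we have $Y_\m(t_0,u_0(t_0))\neq 0$; combined with the non-vanishing of $\xi$ on a dense set, one can split into a basis of cokernel directions and realise surjectivity by a finite-dimensional choice of localised $\hat\beta$. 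Feeding this transversality back into the Sard--Smale machinery yields a residual subset of $\beta\in\B$ for which $\A_\m$ has only non-degenerate critical points, proving the theorem.
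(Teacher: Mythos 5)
Your proposal follows the same Sard--Smale framework as the paper and correctly isolates the decisive step: the finite-dimensional cokernel of the $(u,\eta)$-linearisation must be hit by $\hat\beta$-variations supported in $(\tfrac{1}{2},1)$. Two details deserve correction. First, you linearise the critical point equation, so your $\hat\beta$-perturbation is $Y_\m=\p_\beta X_H^\m$; the paper instead linearises the $1$-form section $S(u,\eta,\beta):=d\A_\m(u,\eta)$, for which the only $\beta$-dependence comes through the exact decoration $\Bc_\m$, producing a direct $\sigma$-pairing against the annihilator element without any reference to $X_H^\m$. The two viewpoints are $\om_\m$-dual at a critical point, but the paper's is more direct. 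Second, your remark that $Y_\m$ ``can in principle be fibre-tangent'' understates the situation: $Y_\m$ is \emph{always} vertical, because $\iota_{Y_\m}\om_\m=-\tau^*\sigma(X_H^\m,\cdot)$ is a horizontal $1$-form and in Darboux coordinates $\om_\m^{-1}$ carries horizontal $1$-forms to vertical vectors. This is not itself fatal (the pairing $\om_\m(\hat v_2,Y_\m)$ detects the horizontal component of the cokernel element), but it must be handled precisely rather than delegated to genericity of $H$.

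The genuine gap is the proposed fix: invoking the genericity of $H\in\Hc$ from \cite[Appendix~A]{AF09}. Theorem~\ref{thm:gen} is a statement about generic $\beta\in\B$ with $F,H,\sigma,\theta$ fixed; perturbing $H$ at this point changes the claim (and would not, for instance, produce the generic set $\B_\Sigma$ of Theorem~\ref{thm:infgen}, which is again about $\beta$ only). The paper closes the argument without touching $H$. After pulling back by the twisted-loop-space diffeomorphism $\Phi_{\eta_0F+H,\m}$ (so that $v_0$ is a constant loop and the Hessian takes the form $\int_0^1\om(\p_t\hat v_1,\hat v_2)\,dt-\cdots$), the annihilator element $(\hat v_2,\hat\eta_2)$ satisfies the elementary ODE $\p_t\hat v_2=\hat\eta_2\rho(t)X_{F_U}(v_0)$. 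The $\hat\beta$-freedom kills $\hat v_2$ on $[\tfrac{1}{2},1]$, and then the explicit ODE solution together with the linearised monodromy condition $\hat v_2(0)=d\varphi^1_{\eta_0F+H,\m}(v_0)[\hat v_2(1)]$ forces $\hat v_2(0)=0$ and $\hat\eta_2=0$ (since $k$ is a regular value, $X_{F_U}(v_0)\neq0$). There is no appeal to unique continuation: for $t\geq\tfrac{1}{2}$, $\rho\equiv0$, so $\hat v_2$ is simply constant on that interval. Your scaffolding is right, but the cokernel-vanishing step as sketched substitutes genericity of $H$ for the twisted loop space gauge, the explicit ODE solution, and the boundary condition that actually finish the proof.
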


\subsection{Preparations}
In order to prove the genericity of the Morse property,
we follow the standard method.
Let us consider a certain linear operator and show its surjectivity
then Theorem \ref{thm:gen} deduced from Sard-Smale's theorem.
In this proof we follow the strategy of \cite[Appendix A]{AF09}.

First, let us recall the definition of the perturbed Rabinowitz action functional
\bean
&\A_\theta:\L\times\R\to\R\\
&\A_\theta(u,\eta)=\int_0^1u^*\lambda-\eta\int_0^1F(t,u(t))dt-\int_0^1H(t,u(t))dt
+\int_0^1 \tau^*\theta_t(\widetilde u(t))[\frac{d}{dt}\widetilde u(t)]dt.
\eea
Here, in this section, $\L\equiv W^{1,2}(\R/\Z,T^*N)$ is the completed loop space of $T^*N$.
For notational convenience we adopt the functionals $\F:\L\to\R$ and $\A^{\eta_0}_\theta:\L\to\R$
defined by
\bean
F(u):=\int_0^1F(t,u)dt,
\qquad
\A^{\eta_0}_\theta(u):=\A_\theta(u,\eta_0)
\eea 
for a fixed $\eta_0\in\R$.
We note that $\A_\theta(u,\eta)=\A^{\eta_0}_\theta(u)+(\eta_0-\eta)\F(u)$, and we obtain
\bean
d\A_\theta(u,\eta)[\hat u,\hat\eta]=d\A^{\eta_0}_\theta(u)[\hat u]
-\hat\eta\F(u)+(\eta_0-\eta)d\F(u)[\hat u]
\eea
where $\hat u\in\Gamma^{1,2}(u^*T(T^*N))$, the space of $W^{1,2}$ vector fields along $u$ and $\eta\in\R$.
For a critical point $w_0=(u_0,\eta_0)\in\Crit(\A_\theta)$ the Hessian at $w_0$ equals
\bean
\He_{\A_\theta}(w_0)[(\hat u_1,\hat \eta_1),(\hat u_2,\hat \eta_2)]
=\He_{\A^{\eta_0}_\theta}(u_0)[\hat u_1,\hat u_2]-\hat\eta_1 d\F(u_0)[\hat u_2]-\hat\eta_2 d\F(u_0)[\hat u_1].
\eea

For a function $(\eta_0F+H):[0,1]\times T^*N\to\R$ and 
an $\R/\Z$-parametrized symplectic form $\om_\sigma$, 
we consider a Hamiltonian type diffeomorphism $\psi$ which is a
time-1-map of $X_{\eta_0F+H}^\sigma$.
We then define
\bea\label{twloop}
\L_{\psi}:=\{v\in W^{1,2}([0,1],T^*N)\,:\,v(0)=\psi(v(1))\},
\eea
the twisted loop space, and introduce the diffeomorphism $\Psi:\L_\psi\to\L$
\bean
\Psi(v)(t)=\psi^t(v(t))
\eea
where $\psi^t$ is a time-$t$-map of $X_{\eta_0F+H}^\sigma$.
For a fixed critical point $w_0=(u_0,\eta_0)$ of $\A_\theta$ we use this diffeomorphism to pull back $\A_\theta$
\bean
\A^\theta:=\A_\theta\circ (\Psi\times\id_\R):\L_\psi\times\R\to\R.
\eea
We set $v_0:=\Psi^{-1}\circ u_0$, thus $v_0=$\ const. Then we simplify the Hessian as follows 
\bean
\He_{\A^\theta}(v_0,\eta_0)[(\hat v_1,\hat\eta_1),(\hat v_2,\hat\eta_2)]
=\int_0^1\om(\frac{d}{dt}\hat v_1,\hat v_2)dt-\hat\eta_1 d\overline\F(v_0)[\hat v_2]-\hat\eta_2 d\overline\F(v_0)[\hat v_1],
\eea
where $\overline\F:=\F\circ\Psi$. 

Recall from Definition \ref{def:defham} that
$F(t,x)=\rho(t)\bar{F}(x)$.
Since $\rho(t)=0$ for $t\in[\frac{1}{2},1]$,
$\psi^t$ preserves the level of $\bar{F}$ for $t\in[\frac{1}{2},1]$ 
and $H(t,x),\sigma_t$ vanish for $t\in[0,\frac{1}{2}]$, 
we compute
\bean
\overline\F(v)&=\int_0^1F(t,\psi^t(v))dt=\int_0^{\frac{1}{2}}F(t,\psi^t(v))dt \\
&=\int_0^{\frac{1}{2}}F(t,v)dt=\int_0^1F(t,v)dt.
\eea  
Thus, the Hessian of $\A^\theta$ becomes
\bea\label{eqn:hess_11}
\He_{\A^\theta}&(v_0,\eta_0)[(\hat v_1,\hat\eta_1),(\hat v_2,\hat\eta_2)] \\
&=\int_0^1\om(\frac{d}{dt}\hat v_1,\hat v_2)dt-\hat\eta_1 \int_0^1dF(t,v_0)[\hat v_2]dt-\hat\eta_2 \int_0^1dF(t,v_0)[\hat v_1]dt
\eea

\subsection{The linearized operator.}
We denote by
\bean 
\Hc^k&:=\{H\in C^k(\R/\Z\times T^*N)\,:\,H(t,\cdot)=0,\ \forall t\in[0,\frac{1}{2}]\}; \\
\mathcal P^k&:=\{\theta\in C^k(\R/\Z,\Om^1(\widetilde N))\,:\,\theta_t=0,\ \forall t\in[0,\frac{1}{2}]\text{ and }
\theta_t\text{ is bounded},\ \forall t\in[\frac{1}{2},1]\}.
\eea
For $v\in\L_\psi$, see (\ref{twloop}), we define the bundle 
$\E_\psi\to\L_\psi$ by
\bean
(\E_\psi)_v:=L^2([0,1],v^*T(T^*N)).
\eea
\begin{Def}
Let $(u_0,\eta_0)$ be a critical point of $\A_\theta$ and $(v_0,\eta_0)$ the corresponding critical point of
$\A^\theta$, that is the constant loop $v_0$ defined by the equation $u_0=\Psi(v_0)$.
Then we define the linear operator
\bean
L_{(v_0,\eta_0,H,\theta)}:
T_{(v_0,\eta_0,H,\theta)}(\L_\psi\times\R\times\Hc^k\times\mathcal P^k)\to 
(\E_\psi)^\vee\times\R
\eea
where $(\E_\psi)^\vee$ is the vertical subspace of the bundle $\E_\psi$.
Then we obtain
\bean
\langle L_{(v_0,\eta_0,H,\theta)}[\hat v_1,\hat\eta_1,\hat H,\hat\theta],(\hat v_2,\hat\eta_2)\rangle\quad&\\
:=\He_{\A^\theta}(v_0,\eta_0)[(\hat v_1,\hat\eta_1),(\hat v_2,\hat\eta_2)]
&+\int_0^1 (\Psi^* d \hat H)(t,v_0)[\hat v_2(t)]dt \\
&+\int_0^1(\Psi^*\tau^*d\hat\theta_t)(\frac{d}{dt}\hat v_1(t),\hat v_2(t))dt.
\eea
\end{Def}

\begin{Prop}\label{prop:linop}
The operator $L_{(v_0,\eta_0,H,\theta)}$ is surjective. Indeed, $L_{(v_0,\eta_0,H,\theta)}$ is surjective 
when restricted to the space
\bean
\V:=\{(\hat v,\hat\eta,\hat H,\hat\theta)\in 
T_{(v_0,\eta_0,H,\theta)}(\L_\psi\times\R\times\Hc^k\times\mathcal P^k)\,:\,\hat v(\frac{1}{2})=0\}.
\eea
\end{Prop}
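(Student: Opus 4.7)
The plan is to argue by duality: suppose $(\hat v_2, \hat\eta_2) \in \E_{\eta_0F+H,\m} \times \R$ annihilates the image of $L_{(v_0,\eta_0,\beta)}|_\V$, and show that then $(\hat v_2, \hat\eta_2) = 0$. The three independent ways of varying the input $(\hat v_1, \hat\eta_1, \hat\beta)$ yield three separate conditions on the dual element.

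Varying $\hat\eta_1$ alone (with $\hat v_1 = 0$ and $\hat\beta = 0$) gives immediately $\int_0^1 dF(t, v_0)[\hat v_2(t)]\,dt = 0$. Setting $\hat\eta_1 = 0$ and varying $\hat v_1$ and $\hat\beta$ independently, the pairing condition decouples because the $\hat\beta$-term is the only summand depending on $\hat\beta$: one obtains both the $\hat\beta$-free Hessian equation
\[
\int_0^1 \om(\p_t \hat v_1, \hat v_2)\,dt = \hat\eta_2 \int_0^1 dF(t, v_0)[\hat v_1]\,dt \qquad \forall\, \hat v_1 \in \V,
\]
and the $\hat\beta$-dependent constraint
\[
\int_{1/2}^1 \hat\beta(t)\,\sigma(\tau_*\p_t \hat v_1(t), \tau_*\hat v_2(t))\,dt = 0 \qquad \forall\, \hat\beta \in \B^k,\ \hat v_1 \in \V.
\]

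From the Hessian equation, I would integrate by parts on $[0, 1/2]$ and $[1/2, 1]$ separately; the condition $\hat v_1(1/2) = 0$ kills the boundary contributions at $t = 1/2$, while the twisted-loop condition $\hat v_1(0) = d\phi^1_{X^\m_{\eta_0F+H}}(v_0)\hat v_1(1)$ combined with the symplectic invariance of $d\phi^1$ merges the boundary terms at $t = 0$ and $t = 1$. This identifies $\hat v_2$ as a piecewise weakly differentiable function that solves $\p_t\hat v_2 = \hat\eta_2 X_F(t, v_0)$ on each half-interval and obeys the twisted matching $\hat v_2(0^+) = d\phi^1_{X^\m_{\eta_0F+H}}(v_0)\hat v_2(1^-)$; in particular $\hat v_2$ is constant on $(1/2, 1)$ since $F$ vanishes there. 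From the $\hat\beta$-dependent constraint, the fundamental lemma of the calculus of variations applied in $\hat\beta$, followed by choosing $\hat v_1$ compactly supported in $(1/2, 1)$ so that $\tau_*\p_t \hat v_1(t)$ sweeps out $T_{q_0}N$ (where $q_0 := \tau(v_0)$), forces $\tau_*\hat v_2(t) \in \ker\sigma|_{q_0}$ for almost every $t \in (1/2, 1)$.

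The final step is to combine the collected pieces. Writing $c := \hat v_2|_{(1/2,1)}$, the twisted matching gives $\hat v_2(0^+) = d\phi^1(v_0)c$, and integrating the ODE on $(0, 1/2)$ expresses $\hat v_2(t)$ there explicitly in terms of $c$ and $\hat\eta_2$. Substituting into the integral condition $\int_0^1 dF(t, v_0)[\hat v_2(t)]\,dt = 0$ and using $dF(t, v_0) = \rho(t)dF_U(v_0)$ yields a linear relation between $c$ and $\hat\eta_2$, which together with $\tau_* c \in \ker\sigma|_{q_0}$ forms a finite linear system. The hard part will be closing this synthesis: one must exploit the transversality of $X_F$ to $\Sigma$ at $v_0$ (coming from $F \in \D(\Sigma)$) to pin down $\hat\eta_2 = 0$, and then use the $\sigma$-constraint together with the symplectic nature of the twisted matching via $d\phi^1$ to conclude $c = 0$. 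This closing argument is in the same spirit as the corresponding step of the $H$-perturbation proof in \cite[Appendix A]{AF09}, with the $\hat\beta$-direction here playing the role that the $\hat H$-direction plays there.
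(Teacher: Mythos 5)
Your proposal adopts the same overall strategy as the paper: pass to the annihilator of the image, decouple the pairing condition into a Hessian condition, an $\hat\eta_1$-condition and a $\hat\beta$-condition, integrate by parts on the Hessian piece to extract the ODE $\p_t\hat v_2 = \hat\eta_2 X_F(t,v_0)$ and the twisted matching, and then try to combine these with the $\hat\beta$-constraint to show $(\hat v_2,\hat\eta_2)=0$. That part matches.

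The problem is that your argument does not close, and by your own admission it does not: you defer to the ``closing argument'' without giving it. The gap is at the step where the paper deduces $\hat v_2(t)=0$ for all $t\in[\frac{1}{2},1]$ from the $\hat\beta$-constraint. Your more careful reading of that constraint is correct: applying the fundamental lemma in $\hat\beta$ and then varying $\hat v_1\in\V$ only forces $\sigma\bigl(\xi,\tau_*\hat v_2(t)\bigr)=0$ for all $\xi\in T_{q_0}N$, i.e.\ $\tau_*\hat v_2(t)\in\ker\sigma|_{q_0}$ for a.e.\ $t\in(\frac{1}{2},1)$. This is strictly weaker than $\hat v_2(t)=0$: even when $\sigma$ is nondegenerate it only kills the horizontal projection, leaving the vertical component of $\hat v_2$ untouched (and when $\sigma$ is degenerate it leaves a horizontal part in $\ker\sigma$ as well). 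The perturbation direction $\hat\beta$ is only a function of $t$ paired with a fixed $2$-form $\sigma$; this is far more rigid than the $\hat H$-direction in the argument of \cite{AF09}, where $d\hat H(t,v_0)$ can be an arbitrary covector and hence annihilating it genuinely forces $\hat v_2=0$ on $[\frac{1}{2},1]$. That is the precise reason why the analogy you invoke breaks down: the $\hat\beta$-direction does \emph{not} play the same role as the $\hat H$-direction.

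Concretely, after combining all three conditions your unknowns reduce to a constant $c:=\hat v_2|_{(1/2,1)}$ satisfying $\tau_* c\in\ker\sigma|_{q_0}$, the matching $(\mathrm{id}-d\varphi^1)[c]=\hat\eta_2 X_{F_U}(v_0)$, and $dF_U(v_0)\bigl[d\varphi^1[c]\bigr]=0$. Nothing in these relations excludes a vertical nonzero $c$ together with a nonzero $\hat\eta_2$; the ``transversality of $X_F$ to $\Sigma$'' you allude to pins down $\hat\eta_2$ only once $c$ is already known to vanish, and you have no mechanism to kill the vertical part of $c$. So the synthesis you postpone cannot be carried out by the means you propose. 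Unless an extra hypothesis is imposed (for instance, nondegeneracy of $\sigma$ together with some additional control of the vertical component, or a different mechanism specific to the magnetic perturbation), you need a genuinely new idea here; appealing to the $H$-case of \cite{AF09} is not enough. Note that the paper's own proof asserts $\hat v_2\equiv 0$ on $[\frac{1}{2},1]$ at equation~(\ref{eqn:v_2van}) without justification, so the difficulty you ran into is a real one in the argument, not merely a shortfall of your exposition.
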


\begin{proof}
The $L^2$-Hessian is a self-adjoint operator. 
Thus, the operator $L_{(v_0,\eta_0,H,\theta)}$ has closed image.
Therefore, it suffices to prove that the annihilator of the image of $L_{(v_0,\eta_0,H,\theta)}$ is zero.
Let $(\hat v_2,\hat\eta_2)$ be in the annihilator of the image of $L_{(v_0,\eta_0,H,\theta)}$, that means
\bean
\langle L_{(v_0,\eta_0,H,\theta)}[\hat v_1,\hat\eta_1,\hat H,\hat\theta],(\hat v_2,\hat\eta_2)\rangle=0
\eea
for all $(\hat v_1,\hat\eta_1,\hat H,\hat\theta)\in T_{(v_0,\eta_0,H,\theta)}(\L_\psi\times\R\times\Hc^k\times\mathcal P^k)$.
This is equivalent to the following three equations:
\bea\label{eqn:hess_v,eta}
\He_{\A^\theta}(v_0,\eta_0)[(\hat v_1,\hat\eta_1),(\hat v_2,\hat\eta_2)]=0,
\quad\forall(\hat v_1,\hat\eta_1)\in(T_{v_0}\L_\psi)\times\R;
\eea
\bea\label{eqn:hess_H}
\int_0^1 d\hat H_t(\psi^t(v_0))[d\psi^t(v_0)[\hat v_2]]=0,
\quad\forall\hat H\in\Hc^k ;
\eea
\bea\label{eqn:hess_beta}
\int_0^1d\hat\theta_t(\psi^t(v_0))\big[\tau_*d\psi^t(v_0)[\frac{d}{dt}\hat v_1(t)],\tau_*d\psi^t(v_0)[\hat v_2(t)]\big]dt=0,
\quad\forall(\hat v_1,\hat\theta)\in T_{v_0}\L_\psi\times\mathcal P^k.
\eea
Note that the Hessian $\He_{\A^\theta}$ is a self-adjoint operator, 
equations (\ref{eqn:hess_11}) and (\ref{eqn:hess_v,eta}) with elliptic regularity implies that
$\hat v_2\in C^{k+1}([0,1],T_{v_0}T^*N)$ and $(\hat v_2,\hat\eta_2)$ satisfies the equation
\bea\label{eqn:v_2condi}
\frac{d}{dt}\hat v_2-\hat\eta_2 X_F(t,v_0)=0
\eea
and the linearized boundary condition
\bea\label{eqn:v_2bound}
\hat v_2(0)=d\psi(v_0)[\hat v_2(1)].
\eea

Equation (\ref{eqn:hess_H}) implies that
\bea\label{eqn:v_2van}
\hat v_2(t)=0,\quad\forall t\in[\frac{1}{2},1].
\eea
Recall from (\ref{eqn:hamvec}) that $X_F(t,x)=\rho(t)X_{\bar{F}}(x)$ then (\ref{eqn:v_2condi}) becomes
\bean
\frac{d}{dt}\hat v_2-\hat\eta_2\rho(t)X_{\bar{F}}(v_0)=0.
\eea
This is a linear ODE in the vector space $T_{v_0}T^*N$ as follows
\bea\label{eqn:v_2rho}
\hat v_2(t)=\hat v_2(0)+\hat\eta_2\left(\int_0^t\rho(\tau)d\tau\right)X_{\bar{F}}(v_0).
\eea
Recall from (\ref{eqn:rhocondi}) that $\int_0^t\rho(\tau)d\tau=1$ for all $t\in[\frac{1}{2},1]$.
Substitute this into (\ref{eqn:v_2rho}) and combine with equation (\ref{eqn:v_2van}), we then obtain 
\bea\label{eqn:v_2>1/2}
0=\hat v_2(t)=\hat v_2(0)+\hat\eta_2X_{\bar{F}}(v_0)
\eea
for $t\geq\frac{1}{2}$.
By using equations (\ref{eqn:v_2bound}) and (\ref{eqn:v_2van}) at $t=1$, we deduce $\hat v_2(0)=0$.
Now, put this into (\ref{eqn:v_2>1/2}) we have
\bean
\hat\eta_2X_{\bar{F}}(v_0)=0
\eea
Since $(v_0,\eta_0)$ is deduced from a critical point $(u_0,\eta_0)$ of $\A_\theta$,
we have $\bar{F}(v_0)=\bar{F}(u(0))=k$, and we already assume that $k$ is a regular value of $\bar{F}$.
In particular,
\bea\label{eqn:eta_2condi}
\hat\eta_2=0
\eea
Equation (\ref{eqn:v_2>1/2}), (\ref{eqn:eta_2condi}) and $\hat v_2(0)=0$ imply
\bean
\hat v_2(t)=0,\quad\forall t\in[0,1].
\eea
Therefore, the annihilator of the image of $L_{(v_0,\eta_0,H,\theta)}$ vanishes 
and thus $L_{(v_0,\eta_0,H,\theta)}$ is surjective.
Moreover, if we restrict the domain of $L_{(v_0,\eta_0,H,\theta)}$ to $\V$
then the only change occurs in (\ref{eqn:v_2condi}) at $t=\frac{1}{2}$.
By continuity, however, equation (\ref{eqn:v_2condi}) is still valid for all $t\in[0,1]$.
\end{proof}

\begin{Rmk}
In the proof of Proposition \ref{prop:linop}, we do not use equation (\ref{eqn:hess_beta}).
This means that for a fixed $\theta\in\mathcal P^k$ there exists $H\in\Hc^k$ such that 
$L_{(v_0,\eta_0,H,\theta)}$ is surjective.
\end{Rmk}

\begin{proof}[Proof of Theorem \ref{thm:gen}]
We first define the Banach space bundle $\E\to\L$ by 
\bean
\E_u=L^2(\R/\Z,u^*T(T^*N))
\eea 
for $u\in\L$. 
Now consider the section $S:\L\times\R\times\Hc^k\times\mathcal P^k\to\E^\vee\times\R$ 
given by the differential of the Rabinowitz action
functional $\A_\theta$
\bea\label{eqn:defS}
S(u,\eta,H,\theta):=d\A_\theta(u,\eta).
\eea
Here $(H,\theta)\in\Hc^k\times\mathcal P^k$ is the additional variables for the perturbation of $\A_\theta$.
Its vertical differential $DS:T_{(u_0,\eta_0,H,\theta)}(\L\times\R\times\Hc^k\times\mathcal P^k)
\to T_{S(u_0,\eta_0,H,\theta)}(\E^\vee\times\R)$ 
at $(u_0,\eta_0,H,\theta)\in S^{-1}(0)$ is 
\bea
\langle D&S_{(u_0,\eta_0,H,\theta)}[(\hat u_1,\hat\eta_1,\hat H,\hat\theta)],(\hat u_2,\hat\eta_2)\rangle \\
&=\He_{\A_\theta}(u_0,\eta_0)[(\hat u_1,\hat\eta_1),(\hat u_2,\hat\eta_2) ]
+\int_0^1d\hat H(t,u_0)[\hat u_2(t)]dt
+\int_0^1d\hat\theta_t(\tau_*\frac{d}{dt}\hat u_1(t),\tau_*\hat u_2(t))dt
\eea
Since $(\Psi\times\id_\R\times\id_{\Hc^k}\times\id_{\mathcal P^k})^*DS=L_{(v_0,\eta_0,H,\theta)}$, 
the operator $DS$ is surjective.
Thus, by the implicit function theorem the moduli space 
\bean
\M:=S^{-1}(0)
\eea
is a smooth Banach manifold. We consider the projection $\Pi_{\Hc^k\times\mathcal P^k}:\M\to\Hc^k\times\mathcal P^k$. 
Then the $\A_\theta$ is Morse if and only if $(H,\theta)$ is a regular value of $\Pi_{\Hc^k\times\mathcal P^k}$.
By the Sard-Smale theorem this forms a generic set for $k$ large enough.
Moreover, the Morse condition is $C^k$-open. 
Thus for function in an open and dense subset of $\Hc^k\times\mathcal P^k$, 
the Rabinowitz action functional is Morse. 
Taking the intersection of all $k$ concludes the proof of Theorem \ref{thm:gen}.
\end{proof}

Now we discuss the Morse property of $\A^f_\theta$.
Since we are interested in critical points of $\A^f_\theta$ with positive action value,
it suffices to check the Morse property for the positive critical points.

\begin{Cor}\label{cor:afmorse}
Given $a>0$ and choose $f\in\F(a)$ (see, Definition \ref{def:F(a)}).
For a generic $(H,\theta)\in\Hc\times\mathcal P$ the perturbed $\F$-Rabinowitz action functional $\A^f_\theta=\A^{F,f}_{H,\theta}$ 
is Morse on the action window $(a+c(H,\theta),\infty]$.
\end{Cor}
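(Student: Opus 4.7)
The plan is to leverage Theorem \ref{thm:gen} by observing that, on the action window $(a+c(H,\m),\infty]$, the functional $\A^f_\m$ coincides with $\A_\m$ to second order at critical points, so the Morse property transfers directly without a fresh transversality argument.

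First I would verify that any critical point $(u,\eta)\in\Crit(\A^f_\m)$ with $\A^f_\m(u,\eta)>a+c(H,\m)$ must satisfy $\eta>a$. This follows from the estimate $|\A^f_\m(u,\eta)-f(\eta)|\leq c(H,\m)$ derived in the proof of Proposition \ref{prop:critmlwip} (see (\ref{eqn:actionetaes})), together with the fact that $f\in\F(a)$ is strictly increasing and satisfies $f(\eta)=\eta$ on $[a,\infty)$; hence $f(\eta)>a$ forces $\eta>a$.

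Next, on the region $\{\eta>a\}$ we have $f(\eta)=\eta$, $f'(\eta)=1$, and $f''(\eta)=0$. Consequently the critical point equations (\ref{eqn:fcriteqn}) for $\A^f_\m$ reduce to the critical point equations (\ref{eqn:critm}) for $\A_\m$, so the two functionals have exactly the same critical points in this region. A direct computation shows that their Hessians at such a critical point $(u_0,\eta_0)$ also agree: the only additional term appearing in $\Hess(\A^f_\m)$ beyond the substitution $\eta\mapsto f(\eta)$ is $-f''(\eta_0)\hat\eta_1\hat\eta_2\int_0^1 F(t,u_0)\,dt$, and this vanishes both because $f''\equiv 0$ on $[a,\infty)$ and because $\int_0^1 F(t,u_0)\,dt=0$ at a critical point.

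Therefore, on the window $(a+c(H,\m),\infty]$, non-degeneracy of the critical points of $\A^f_\m$ is equivalent to non-degeneracy of the corresponding critical points of $\A_\m$. Applying Theorem \ref{thm:gen} to $\A_\m$ yields a generic $\beta\in\B$ for which $\A_\m$ is Morse, and this immediately gives the desired Morse property of $\A^f_\m$ on the prescribed action window. The only step requiring real care is the Hessian identification; once the vanishing of the $f''$-term has been noted, no separate genericity analysis for $\A^f_\m$ is required.
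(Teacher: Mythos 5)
Your proof is correct and follows essentially the same route as the paper: both reduce to the observation that a critical point of $\A^f_\m$ whose action exceeds $a+c(H,\m)$ must have $\eta_0>a$ (via the estimate $|\A^f_\m(u,\eta)-f(\eta)|\leq c(H,\m)$ from Proposition~\ref{prop:critmlwip}), so that $f(\eta_0)=\eta_0$, $f'(\eta_0)=1$ and $f''(\eta_0)=0$, whereupon the transversality argument of Theorem~\ref{thm:gen} applies verbatim. You present it slightly more explicitly than the paper — spelling out that the critical point equations and Hessians of $\A^f_\m$ and $\A_\m$ literally coincide on the region $\{\eta>a\}$, including noting that the extra $-f''(\eta_0)\hat\eta_1\hat\eta_2\int F$ term vanishes for two independent reasons — but this is a clarification of the same argument rather than a genuinely different proof; the paper's terse phrasing ``the argument in the proof of Theorem~\ref{thm:gen} definitely holds'' relies on exactly the identifications you make precise. (As a minor aside, the paper's displayed condition ``$\A^f_\m(u_0,\eta_0)>a-c(H,\m)$'' appears to be a typo for $a+c(H,\m)$; your version matches the action window stated in the corollary.)
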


\begin{proof}
Let $w_0=(u_0,\eta_0)$ be a critical point of $\A^f_\theta$ with $\A^f_\theta(u_0,\eta_0)>a-c(H,\theta)$ 
then by the argument in Proposition \ref{prop:critmlwip} we obtain $f(\eta_0)>a$.
Since $f\in\F(a)$, see Definition \ref{def:F(a)}, we conclude $f'(\eta_0)=1$. 
Hence the argument in the proof of Theorem \ref{thm:gen} definitely holds.
This proves the corollary.
\end{proof}

\section{No periodic magnetic leaf-wise intersection points}
\label{sec:noper}
In this section, we study the second regularity property of 
$\varphi$, see Definition \ref{def:Mreg}.
The claim is that $\varphi$ has no periodic leaf-wise intersection points 
for generic $H\in\Hc$ and $\theta\in\mathcal P$.
In \cite{AF09b} Albers-Frauenfelder already studied the above
property with respect to $H\in\Hc$.
As in Appendix \ref{app:gen}, we work with $(H,\theta)\in\Hc\times\mathcal P$
and modify the strategy of \cite{AF09b}.

Recall that the hypersurface $\Sigma\subset T^*N$ is called non-degenerate 
if closed Reeb orbits on $\Sigma$ form a discrete set.
A generic $\Sigma$ is non-degenerate, see \cite[Theorem B.1]{CF09}.
If the critical points of $\A_\theta$ does not meet any closed Reeb orbit
then there are no periodic leaf-wise intersection points.
Thus it suffices to prove the following theorem.

\begin{Thm}\label{thm:infgen}
Let $\Sigma\subset T^*N$ be a non-degenerate starshaped hypersurface and 
$\Rb$ be a set of closed Reeb orbit on $\Sigma$ which form a discrete set.
If $\dim N\geq 2$ then the set
\bea\label{eqn:Hbetasigma}
\{(H,\theta)\in\Hc\times\mathcal P\,:\,\A_\theta \text{ is Morse and }\im(x)\cap\im(y)=\emptyset,\ \forall 
x\in\Crit(\A_\theta),\ y\in\Rb\}
\eea
is generic in $\Hc\times\mathcal P$, see Definition \ref{def:betadef} and \ref{def:perham}.
\end{Thm}
\begin{proof}
We first define the evaluation map $\ev:\M\to\Sigma$
\bean
\ev(u_0,\eta_0,H,\theta)= u_0(\frac{1}{2})
\eea
where $\M$ is the same as in the proof of Theorem \ref{thm:gen}.
Proposition \ref{prop:linop} with Lemma \ref{lem:surj} below guarantee that the evaluation map
\bean
\ev_{(H,\theta)}:=\ev(\cdot,\cdot,H,\theta):\Crit(\A_\theta)\to\Sigma
\eea 
is a submersion for a generic choice of $(H,\theta)$.
Let $\Rb^n$ be the set of Reeb orbit with period less than $n$
which is a 1-dimensional set in $\Sigma$,
then $\ev_{(H,\theta)}^{-1}(\Rb^n)$ does not intersect $\Crit(\A_\theta)$
since $\dim \Sigma\geq3$. 
Therefore, the set
\bea\label{eqn:HbetaSigman}
\{(H,\theta)\in\Hc\times\mathcal P\,:\,\A_\theta \text{ is Morse and }\im(x)\cap\im(y)=\emptyset,\ \forall 
x\in\Crit(\A_\theta),\ y\in\Rb^n\}
\eea
is generic in $\Hc\times\mathcal P$ for all $n\in\N$. 
Now, the set (\ref{eqn:Hbetasigma}) is a countable intersection 
of the set (\ref{eqn:HbetaSigman}), for all $n\in\N$.
This proves the Theorem \ref{thm:infgen}
\end{proof}

The following lemma is contained in \cite{AF09b}.

\begin{Lemma}\label{lem:surj}
Let $\E\to\S$ be a Banach bundle and $s:\S\to\E$ a smooth section.
Moreover, let $\Phi:\S\to \mathcal{C}$ be a smooth map into the Banach manifold $\mathcal{C}$.
We fix a point $x\in s^{-1}(0)\subset\S$ and set $K:=\ker d\Phi(x)\subset T_x\S$
and assume the following two conditions.
\begin{enumerate}
\item The vertical differential $Ds|_K:K\to\E_x$ is surjective.
\item $d\Phi(x):T_x\S\to T_{\Phi(x)}\mathcal{C}$ is surjective.
\end{enumerate}
Then $d\Phi(x)|_{\ker Ds(x)}:\ker Ds(x)\to T_{\Phi(x)}\mathcal{C}$ is surjective.
\end{Lemma}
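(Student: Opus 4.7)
\textbf{Proof proposal for Lemma \ref{lem:surj}.}

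The plan is a short diagram-chase: I will produce a preimage of an arbitrary tangent vector at $\psi(x)$ by first lifting it via the surjectivity of $d\psi(x)$, and then correcting the lift by an element of $K$ to land inside $\ker Ds(x)$. There is no compactness or analytic input needed; the entire content is linear algebra in Banach spaces, using only the two surjectivity hypotheses.

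Concretely, fix an arbitrary $v\in T_{\psi(x)}N$. By hypothesis (2), $d\psi(x):T_x\S\to T_{\psi(x)}N$ is surjective, so I can choose some $\xi_0\in T_x\S$ with $d\psi(x)[\xi_0]=v$. In general $\xi_0$ will not lie in $\ker Ds(x)$, so consider its obstruction $Ds(x)[\xi_0]\in\E_x$. By hypothesis (1), the restriction $Ds(x)|_K:K\to\E_x$ is surjective, hence there exists $\xi_1\in K$ with $Ds(x)[\xi_1]=Ds(x)[\xi_0]$. Setting $\xi:=\xi_0-\xi_1$, linearity gives $Ds(x)[\xi]=0$, so $\xi\in\ker Ds(x)$. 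On the other hand, because $\xi_1\in K=\ker d\psi(x)$, one has $d\psi(x)[\xi]=d\psi(x)[\xi_0]-d\psi(x)[\xi_1]=v-0=v$. This shows that $v$ is in the image of $d\psi(x)|_{\ker Ds(x)}$, and since $v$ was arbitrary, this restriction is surjective.

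Since the argument is a purely algebraic cancellation using the two given surjectivity properties, I do not expect any serious obstacle; the only subtlety worth stating explicitly is the mild abuse of notation at the start of the lemma, where the symbols $\S$ and $\B$ play the same role (the section $s$ and the map $\psi$ share their domain), so that both $K\subset T_x\S$ and the vertical differential $Ds(x):T_x\S\to\E_x$ make sense on the same tangent space.
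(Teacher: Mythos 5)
Your argument is correct and is essentially identical to the paper's own proof: lift $v$ to some $\xi_0$ using condition (2), use condition (1) to find $\xi_1\in K$ with $Ds(x)\xi_1=Ds(x)\xi_0$, and take the difference $\xi_0-\xi_1$, which lies in $\ker Ds(x)$ and still maps to $v$ under $d\psi(x)$. Your closing remark about the notational conflation of $\S$ and $\B$ is a fair reading of the statement and matches how the paper actually uses the lemma.
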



\begin{thebibliography}{0}

\bibitem{AS06} A. Abbondandolo, M. Schwarz,
{\em On the Floer homology of cotangent bundles,}
Comm. Pure Appl. Math. {\bf 59} (2006), 254-316

\bibitem{AS09} A. Abbondandolo, M. Schwarz,
\emph{Estimates and computations in Rabinowitz-Floer homology},
Jour.\,Topology Analysis \textbf{1}, no. 4 (2009), 307-405.

\bibitem{AF09b} P. Albers, U. Frauenfelder,
\emph{Infinitely many leaf-wise intersections on cotangent bundles},
Expositiones Mathematicae {\bf 30}, no. 2 (2012), 168-181.

\bibitem{AF09} P. Albers, U. Frauenfelder,
\emph{Leaf-wise intersections and Rabinowitz Floer homology},
J. Topol. Anal. {\bf 2}, no. 1 (2010), 77-98

\bibitem{AF10b} P. Albers, U. Frauenfelder,
{\em Rabinowitz Floer homology: A survey},
Global Differential Geometry, Springer Proceedings in Mathematics
{\bf 17}, no. 3 (2012), 437-461, .

\bibitem{AF10c} P. Albers, U. Frauenfelder,
{\em A remark on a Theorem by Ekeland-Hofer},
Israel Journal of Mathematics (2012), 485-491.

\bibitem{AM09} P. Albers, M. McLean,
{\em Non-displaceable contact embeddings and infinitely many leaf-wise intersections,}
Journal of Symplectic Geometry (2011), 271-284.

\bibitem{AM10} P. Albers, A. Momin,
{\em Cup-length estimates for leaf-wise intersections,}
Mathematical Proceedings of the Cambridge Philosophical Society (2010), 539-551, .

\bibitem{BF} Y. Bae, U. Frauenfelder,
{\em Continuation Homomorphism in Rabinowitz Floer homology for symplectic deformations},
Math. Proc. of the Cambridge Phil. Soc. {\bf 151}, no. 3 (2011), 471-502

\bibitem{Ban} A. Banyaga,
{\em On fixed points of symplectic map},
Invent. Math. {\bf 57}, no. 3 (1980), 215-229.

\bibitem{CF09} K. Cieliebak, U. Frauenfelder,
{\em A Floer homology for exact contact embeddings},
Pacific J. Math. {\bf 239}, no. 2 (2009), 251-316.

\bibitem{CFO09} K. Cieliebak, U. Frauenfelder, and A. Oancea,
{\em Rabinowitz Floer homology and symplectic homology}, 2009,
Annales Scientifiques de L'ENS {\bf 43}, no. 6 (2010), 957-1015.

\bibitem{Dra} D. L. Dragnev, 
{\em Symplectic rigidity, symplectic fixed points and global perturbations of Hamiltonian dynamics},
Comm. Math. Phys. {\bf 61}, no. 3 (2008), 346-370.

\bibitem{EH} I. Ekeland and H. Hofer,
{\em Two symplectic fixed-points theorems with applications to Hamiltonian dynamics},
J. Math. Pure et Appl. {\bf 68}, no. 4 (1989), 467-489.

\bibitem{Fl1} A. Floer, {\em Morse theory for Lagrangian
intersections}, J. Diff. Geom. {\bf 28} (1988), 513-547.

\bibitem{Fl2} A. Floer, {\em The unregularized gradient flow of
the symplectic action}, Comm. Pure Appl. Math. {\bf 41} (1988),
775-813.

\bibitem{Gin} V. Ginzburg,
{\em Coisotropic Intersections},
Duke Math. J. {\bf 140}, no. 1 (2007), 111-163.

\bibitem{Gur} B. Z. G\"{u}rel,
{\em Leaf-wise coisotropic intersections},
Int. Math. Res. Not. {\bf 5} (2010), 914-931.

\bibitem{Hof} H. Hofer,
{\em On the topological properties of symplectic maps},
Proc. Roy. Soc. Edinburgh Sect. A {\bf 115}, no. 1-2 (1990), 25-38.

\bibitem{Kan09} J. Kang,
{\em Existence of leafwise intersection points in the unrestricted case,}
Israel Journal of Mathematics {\bf 190}, no. 1 (2012), 111-134. 

\bibitem{Kan10} J. Kang,
{\em Generalized Rabinowitz Floer homology and coisotropic intersections,}
arXiv:1003.1009(2010).

\bibitem{Kan10b} J. Kang,
{\em Survival of infinitely many critical points for the Rabinowitz action functional,}
J. Modern Dynamics {\bf 4}, no. 4 (2010), 733-739.

\bibitem{Ked09} J. K\c{e}dra,
{\em Symplectically hyperbolic manifolds},
Differential Geometry and its Applications, 27 (2009), 455-463.

\bibitem{MMP} L. Macarini, W. Merry, G. Paternain,
{\em On the growth rate of leaf-wise intersections}
2011, arXiv:1101.4812v1

\bibitem{MS} D. McDuff, D. Salamon,
{\em J-holomorphic curves and symplectic topology},
American Mathematical Society Colloquium Publication, vol. 52,
American Mathematical Society, Providence, RI, 2004.

\bibitem{Mc} M. McLean,
{\em The growth rate of symplectic homology and affine varieties},
2010, arXiv:1011.2542

\bibitem{Mer10} W. Merry,
{\em On the Rabinowitz Floer homology of twisted cotangent bundles},
Calc. Var. Partial Differential Equations {\bf 42}, no. 3-4 (2011), 355-404.

\bibitem{Mos} J. Moser, 
{\em A fixed point theorem in symplectic geometry},
Acta, Math. {\bf 141}, no. 1-2 (1978), 17-34.

\bibitem{Sal} D. Salamon,
\emph{Lectures on Floer homology},
in Symplectic Geometry and Topology, Eds. Y. Eliashberg and L. Traynor. IAS/Park City Math. Series, vol. 7, AMS, 1999, 143-229.

\bibitem{SZ} D. Salamon, E. Zehnder,
\emph{Morse Theory for Periodic Solutions of Hamiltonian System
and the Maslov Index},
Comm. Pure. Appl. Math. {\bf 45} (1992), 1303-1360

\bibitem{Zil} F. Ziltener,
{\em Coisotropic submanifolds, leaf-wise fixed points and presymplectic embeddings},
J. Symp. Geom. {\bf 8}, no. 4 (2010), 1-24.

\end{thebibliography}
\end{document}